\pgfplotsset{compat=1.15}
\tikzset{
>=stealth',
  punktchain/.style={
    rectangle,
    rounded corners,
    draw=black, thick,
    minimum height=3em,
    text centered,
    on chain},
  line/.style={draw, thick, <-},
  element/.style={
    tape,
    top color=white,
    bottom color=blue!50!black!60!,
    minimum width=8em,
    draw=blue!40!black!90, very thick,
    text width=10em,
    minimum height=3.5em,
    text centered,
    on chain},
  every join/.style={->, thick,shorten >=1pt},
  decoration={brace},
  tuborg/.style={decorate},
  tubnode/.style={midway, right=2pt},
}
\renewcommand\;{\hspace{.6pt}}
\newcommand\PP{\mathbb P}
\newcommand\C{\mathbb C}
\newcommand\Q{\mathbb Q}
\newcommand\R{\mathbb R}
\newcommand\Z{\mathbb Z}
\newcommand\cC{\mathcal C}
\newcommand\cH{\mathcal H}
\newcommand\cO{\mathcal O}
\newcommand\cE{\mathcal E}
\newcommand\ch{\operatorname{ch}}
\newcommand\CH{\operatorname{CH}}
\newcommand\td{\operatorname{td}}
\newcommand\Hom{\operatorname{Hom}}
\newcommand\Pic{\operatorname{Pic}}
\newcommand\rk{\operatorname{rk}}
\newcommand\id{\operatorname{id}}
\newcommand\Coh{\operatorname{Coh}}
\newcommand\cok{\operatorname{coker}}
\newcommand\gon{\operatorname{gon}}
\renewcommand\hom{\mathcal H\hspace{-1pt}om}
\newcommand\bn{\operatorname{BN}}
\renewcommand\={\ =\ }
\newcommand\beq[1]{\begin{equation}\label{#1}}
\newcommand\eeq{\end{equation}}
\newcommand\beqa{\begin{eqnarray*}}
\newcommand\eeqa{\end{eqnarray*}}
\newtheorem*{rep@theorem}{\rep@title}
\newcommand{\newreptheorem}[2]{%
\newenvironment{rep#1}[1]{%
 \def\rep@title{#2 \ref{##1}}%
 \begin{rep@theorem}}%
 {\end{rep@theorem}}}
\newtheorem{Thm}{Theorem}[section]
\newtheorem{Thm*}{Theorem}
\newtheorem{Prop}[Thm]{Proposition}
\newtheorem{Lem}[Thm]{Lemma}
\newtheorem{Cor}[Thm]{Corollary}
\newtheorem{Con}[Thm]{Conjecture}
\newtheorem{Conj}{Conjecture}
\newtheorem{thm-int}{Theorem}
\theoremstyle{definition}
\newtheorem{Def-s}[Thm]{Definition}
\newtheorem{Def}[Thm]{Definition}
\newtheorem{Rem}[Thm]{Remark}
\newtheorem{Ex}[Thm]{Example}
\newcommand{\ignore}[1]{}
\begin{document}

\title[Stability conditions on CY3 via BN theory of curves]{Stability conditions on Calabi--Yau threefolds via Brill--Noether theory of curves}

\subjclass[2020]{14F08 (Primary); 14F06, 14J32, 14H60 (Secondary)}
\keywords{Stability condition, Generalized Bogomolov--Gieseker inequality, Calabi--Yau threefold, Brill--Noether theory}

\author{Soheyla Feyzbakhsh}
\address{Department of Mathematics, Imperial College, London SW7 2AZ, United Kingdom}
\email{s.feyzbakhsh@imperial.ac.uk}

\author{Naoki Koseki}
\address{The University of Liverpool, Mathematical Sciences Building, Liverpool L69 7ZL, United Kingdom}
\email{koseki@liverpool.ac.uk}

\author{Zhiyu Liu}
\address{School of Mathematical Sciences, Zhejiang University, Hangzhou 310058, P. R. China}
\address{Institute for Theoretical Studies, ETH Z\"urich, Z\"urich 8006, Switzerland}
\email{jasonlzy0617@gmail.com}

\author{Nick Rekuski}
\address{The University of Liverpool, Mathematical Sciences Building, Liverpool L69 7ZL, United Kingdom}
\email{rekuski@liverpool.ac.uk}

\begin{abstract}
Fix a polarised Calabi--Yau threefold $(X,H)$. We reduce a version of the Bayer--Macr\`i--Toda conjecture for $(X,H)$, which ensures the existence of Bridgeland stability conditions on $X$, to verifying a Brill--Noether-type inequality for curves on $X$. We then prove this inequality for a broad class of Calabi--Yau threefolds, including complete intersection Calabi--Yau threefolds in weighted projective spaces.

\end{abstract}

\maketitle

{
\hypersetup{linkcolor=blue}
\setcounter{tocdepth}{1}
}

\section{Introduction}

The notion of Bridgeland stability conditions on triangulated categories was introduced by \cite{bridgeland:stability-condition-on-triangulated-category}. Since then, constructing stability conditions on the derived categories of smooth projective threefolds has become an important problem. By extending the notion of tilt-stability on surfaces, Bayer--Macr\`i--Toda \cite{bayer:bridgeland-stability-conditions-on-threefolds} conjectured a Bogomolov--Gieseker-type inequality involving $\ch_3$, which, as shown in \cite{bayer:the-space-of-stability-conditions-on-abelian-threefolds}, yields an open subset in the space of stability conditions on threefolds. Later, a generalized conjecture was proposed in \cite{macri:stability-fano-3fold,bayer-macri:icm-report} by adding a real $1$-cycle $\Gamma$ to the formulation, which we refer to as \ref{conj:bmt}. Such $\ch_3$-inequalities not only suffice to construct Bridgeland stability conditions on threefolds, but also play crucial roles in enumerative geometry of Calabi--Yau threefolds \cite{toda:bogomolov-counting,feyz:curve-counting,feyz:rank-r-dt-theory-from-0,feyz:rank-r-dt-theory-from-1,feyz:physics-abelian-dt,liu:cast-bound-quintic,liu:cast-bound-3fold} and in birational geometry \cite{bbmt14}.


In recent years, Bayer--Macr\`i--Toda's original conjecture and its variant \ref{conj:bmt} have been established for many threefolds, and we refer readers to \cite[Section 4.1]{bayer-macri:icm-report} for a list. For Calabi--Yau threefolds, however, the only known cases are quintic threefolds \cite{chunyi:stability-condition-quintic-threefold}, $(2,4)$-complete intersection threefolds \cite{liu:bg-ineqaulity-quadratic}, weighted hypersurfaces of degree $6$ or $8$ \cite{koseki:double-triple-solids}, and some \'etale quotients of abelian threefolds \cite{bayer:the-space-of-stability-conditions-on-abelian-threefolds}. Each example requires different calculations, and the lack of a unified approach to \ref{conj:bmt} has hindered further progress.


In the first part of the paper, we reduce \ref{conj:bmt} to verifying a simple linear inequality for slope-stable sheaves. Throughout this section, we fix a polarised Calabi--Yau threefold~$(X,H)$, i.e.~a smooth projective threefold $X$ with an ample divisor $H$ such that $K_X= 0$ and $H^1(X, \cO_X)=0$. For any $\epsilon>0$, we consider the following statement:

\vspace{1mm}

\begin{minipage}{14.2cm}\vspace{2mm}
\begin{enumerate}[label=$\mathbf{BG_3(\epsilon)}$]
\item\label{BG3} \emph{~For any $H$-stable sheaf $E$ on $X$ with 
$0< \ch_1(E).H^2 \leq \epsilon \ch_0(E)H^3$, 
we have 
\begin{equation*}
    \ch_2(E).H<-\frac{1}{2}\ch_1(E).H^2.
\end{equation*}
}
\end{enumerate}
\end{minipage}

\vspace{1mm}

\begin{Thm}[{Theorem \ref{thm:ch2}}]\label{Thm-BG-BMT}
Assume that there exists $\epsilon>0$ such that \ref{BG3} holds. Then there exists a $1$-cycle $\Gamma$ with $\Gamma.H\geq 0$ such that \emph{\ref{conj:bmt}} holds for $(X, H)$. In particular, there exists a family of geometric\footnote{Here, a stability condition $\sigma$ on $X$ is called \textit{geometric} if the structure sheaf $\cO_x$ of any point $x \in X$ is $\sigma$-stable of the same phase.} Bridgeland stability conditions on the bounded derived category of coherent sheaves on $X$.
\end{Thm}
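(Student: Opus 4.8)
The plan is to peel \ref{conj:bmt} down to tilt-stability and then, via wall-crossing, down to slope-stable sheaves of small slope, where \ref{BG3} is exactly the input needed.

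\emph{Reductions.} By \cite{bayer:bridgeland-stability-conditions-on-threefolds, bayer:the-space-of-stability-conditions-on-abelian-threefolds}, once \ref{conj:bmt} holds for some $1$-cycle $\Gamma$, the double-tilted hearts (with the $\Gamma$-twisted central charge) carry the advertised family of geometric stability conditions; so it suffices to produce such a $\Gamma$ with $\Gamma.H\geq 0$ for which the generalized Bogomolov--Gieseker inequality holds for every $\nu_{\alpha,\beta}$-tilt-semistable object $E$ with $\nu_{\alpha,\beta}(E)=0$. Using the quadratic-form reformulation and the openness of the locus of valid $(\alpha,\beta)$ (both from \cite{bayer:bridgeland-stability-conditions-on-threefolds}), I would reduce to the case that $\beta$ is rational and, after twisting $E$ by $\cO_X(mH)$, very close to $0$; that $E$ is tilt-stable; and that $\ch_0(E)\neq 0$ — objects supported in dimension $\le 2$ being covered by the classical surface Bogomolov--Gieseker inequality. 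Throughout, the classical inequality $\overline\Delta_H(E)\ge 0$ for tilt-semistable objects is available.

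\emph{Wall-crossing.} The main argument is an induction on $\overline\Delta_H(E)$. Fix $(\alpha_0,\beta_0)$ with $\nu_{\alpha_0,\beta_0}(E)=0$ and move along a path in the $(\alpha,\beta)$-plane toward the large-volume limit. If $E$ is destabilized at an actual wall, we get $0\to A\to E\to B\to 0$ with $A,B$ tilt-stable and $\overline\Delta_H(A),\overline\Delta_H(B)<\overline\Delta_H(E)$; the inductive hypothesis, combined with the (near-)linearity of the quadratic form along such sequences as in \cite{chunyi:stability-condition-quintic-threefold}, gives the inequality for $E$. Otherwise $E$ remains tilt-stable all the way, and in the limit $E$ or $E[1]$ is, up to torsion, an $H$-slope-semistable sheaf whose $H$-slope is automatically close to $0$ (this is precisely why only the small-slope statement \ref{BG3} is needed). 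Passing to its slope-stable Jordan--Hölder factors $F_i$, and dualizing the negatively-sloped ones, \ref{BG3} yields $H.\ch_2(F_i)<-\tfrac{1}{2}H^2.\ch_1(F_i)$ for each $i$. Combining these with $\overline\Delta_H\ge 0$, with Khovanskii--Teissier/Hodge-index inequalities, and with a Riemann--Roch computation expressing $\ch_3^{\beta_0}(E)$ through the Chern data and hyperplane restrictions of the $F_i$, one bounds $\ch_3^{\beta_0}(E)$ from above by the BMT expression plus an error that is an $\R$-linear functional of $\ch_1^{\beta_0}(E)$.

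\emph{Assembling $\Gamma$ and the obstacle.} Each use of \ref{BG3}, each twist by $\cO_X(mH)$ (with $m$ bounded uniformly by the normalization of $\beta_0$), and each wall contributes a term $\Gamma_i.\ch_1^{\beta_0}(E)$ for a fixed curve class $\Gamma_i$, and their sum is the desired $\Gamma$; since $H^2.\ch_1^{\beta_0}\ge 0$ on the relevant objects, replacing $\Gamma$ by $\Gamma+cH^2$ with $c\gg 0$ only strengthens the inequality and arranges $\Gamma.H\ge 0$. The crux — and the step I expect to be hardest — is exactly the one turning the $\ch_2$-hypothesis \ref{BG3} into $\ch_3$-control: a slope-stable sheaf has no a priori upper bound on $\ch_3$ in terms of its lower Chern characters, and what rescues the argument is the far greater rigidity of a tilt-stable object of tilt-slope zero, converted through the wall-crossing dictionary and Riemann--Roch/restriction arguments (this is ultimately where Brill--Noether theory of curves will enter, in the subsequent verification of \ref{BG3} itself). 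The secondary difficulties are guaranteeing that the relevant slopes fall in the window of \ref{BG3} after a bounded twist, and checking that the accumulated errors genuinely assemble into a single curve class $\Gamma$ rather than a bound depending on $E$.
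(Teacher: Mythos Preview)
Your wall-crossing outline has a genuine gap at the base case. You assert that when $E$ stays tilt-stable all the way to large volume, the resulting slope-semistable sheaf has $H$-slope ``automatically close to $0$''. This is false: after normalizing $\beta_0$ into $[0,1)$ by a twist, the condition $\nu_{\alpha_0,\beta_0}(E)=0$ constrains $\ch_2^{\beta_0 H}(E).H$ against $\ch_0(E)$, not $\ch_1(E).H^2$ against $\ch_0(E)$; there is no a priori reason for $\mu_H(E)$ to land in the window $(0,\epsilon]$ where \ref{BG3} applies. Even if it did, \ref{BG3} bounds $\ch_2$, and your sentence ``a Riemann--Roch computation expressing $\ch_3^{\beta_0}(E)$ through \ldots\ hyperplane restrictions of the $F_i$'' does not supply any mechanism for turning that into a $\ch_3$-bound. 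A slope-stable sheaf simply has no upper bound on $\ch_3$ in terms of $\ch_{\le 2}$, and nothing in your argument exploits the extra rigidity of tilt-stability at $(\alpha_0,\beta_0)$ to produce one.

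The paper's proof is organized quite differently and contains the missing idea. Using \cite{chunyi:stability-condition-quintic-threefold,koseki:double-triple-solids} one reduces directly to BN-stable $E\in\Coh^0(X)$ with $\nu_{BN}(E)\in[0,\tfrac12]$, so there is no wall-crossing induction at all. The $\ch_3$-bound comes from \emph{section counting}: form the universal extension $0\to E\to\widetilde{E}\to\Hom(\cO_X,E)\otimes\cO_X[1]\to 0$, which is again BN-semistable. Now \ref{BG3} is used not to say that some slope is small, but as a \emph{forbidden-interval} statement (Lemma~\ref{lem:st-to-tilt-st}): no BN-semistable object with $\nu_{BN}\in(0,\tfrac12]$ can have $|\mu_H|\in(0,\epsilon]$. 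Applied to $E$ this gives $\ch_0(E)\le \tfrac{1}{H^3\epsilon}\ch_1(E).H^2$; applied to $\widetilde{E}$ it bounds $\ch_0(\widetilde{E})$ from below and hence $\mathrm{hom}(\cO_X,E)$ from above. Then Hirzebruch--Riemann--Roch together with the vanishing of Lemma~\ref{lem:vanish-hom-bn} yields $\ch_3(E)\le \mathrm{hom}(\cO_X,E)-\td_2(X).\ch_1(E)+\ch_0(E)$, and one checks by direct substitution that $Q^{\Gamma}_{0,0}(E)\ge 0$ for $\Gamma=\gamma H^2-\td_2(X)$ with $\gamma\ge\max\{4/(H^3\epsilon),\,\td_2(X).H/H^3\}$. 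The universal-extension step is the crux you are missing; without it there is no route from \ref{BG3} to a $\ch_3$-inequality.
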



In practice, we prove \ref{BG3} through a dimensional reduction method. To this end, we introduce a new invariant associated to integral curves, called the \emph{Brill--Noether number}, which may be of independent interest.
\begin{Def}\label{def:bnc}
    Let $C$ be an integral projective curve of arithmetic genus $g$. We define the \emph{Brill--Noether number} $\bn_C$ of $C$ as\footnote{If $C$ is Gorenstein, the interval in the definition can be replaced by $(g-1-t,g-1]$ (cf.~Remark \ref{rmk:left-limit}).}
\begin{equation*}
    \bn_C \coloneqq \lim_{t\to 0} \sup \left\{ \frac{h^0(E)}{\rk(E)} \colon \ \text{$E$ is a stable sheaf on $C$ with } \frac{\deg(E)}{\rk(E)}\in (g-1-t,g-1+t)    \right\}. 
\end{equation*}
\end{Def}



Our next theorem shows that establishing \ref{BG3} reduces to studying the invariant $\bn_C$ of a single curve $C$.

\begin{Thm}[{Theorem \ref{thm:main-criterion}(a)}]\label{thm-intro-bn}
Suppose that there exists a smooth surface $S\in |H|$ and a smooth curve $C\in |H|_S|$ such that
\begin{equation*}
        \bn_C< \chi(\cO_X(H)).
\end{equation*}
Then \ref{BG3} holds for some $\epsilon>0$, and in particular, \emph{\ref{conj:bmt}} holds for $(X, H)$ and some $1$-cycle $\Gamma$ with $\Gamma.H\geq 0$.
\end{Thm}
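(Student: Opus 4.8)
The plan is to establish $\mathbf{BG_3(\epsilon)}$ by contradiction, reducing the statement to the curve $C$ via iterated restriction. Suppose $\mathbf{BG_3(1/n)}$ fails for every $n\geq 1$; then there is an $H$-stable sheaf $E_n$ on $X$ with $0<\ch_1(E_n).H^2\leq\tfrac1n\ch_0(E_n)H^3$ and $\ch_2(E_n).H\geq-\tfrac12\ch_1(E_n).H^2$. Replacing $E_n$ by its reflexive hull preserves $\ch_0,\ch_1$ and does not decrease $\ch_2.H$, so I may assume $E_n$ (hence $E_n^\vee$) is $\mu_H$-stable and reflexive. Write $r_n=\ch_0(E_n)$; since $\ch_1(E_n).H^2$ is a positive integer, $r_n\geq n/H^3\to\infty$.

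The first observation is that the discriminants become small. Combining $\ch_2(E_n).H\geq-\tfrac12\ch_1(E_n).H^2$ with the Hodge-index inequality $\ch_1(E_n)^2.H\leq(\ch_1(E_n).H^2)^2/H^3$ (valid on any surface in $|H|$) and with $\ch_1(E_n).H^2\leq\tfrac1n r_nH^3$, one gets
\[
\overline\Delta_H(E_n):=\frac{\ch_1(E_n)^2.H-2r_n\,\ch_2(E_n).H}{r_n^2}\ \leq\ \Bigl(\tfrac{1}{n^2}+\tfrac1n\Bigr)H^3\ \xrightarrow{\ n\to\infty\ }\ 0.
\]
Now for the reduction. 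By adjunction $K_S=H|_S$ and $K_C=2H|_C$, so the arithmetic genus of $C$ is $g=H^3+1$, and $\chi(\cO_S)=\chi(\cO_X(H))$ (from $0\to\cO_X(-H)\to\cO_X\to\cO_S\to0$ and Serre duality on $X$, using $\chi(\cO_X)=0$). I would first replace the given $S$ and $C$ by general members of $|H|$ and $|H|_S|$: one should prove, as a separate lemma, that $\bn_{C}$ is upper semicontinuous in the (irreducible) family of such smooth curves, so that $\bn_C<\chi(\cO_X(H))$ passes to the general member; having done so I fix $t_0>0$ with $\delta:=\chi(\cO_X(H))-\sup\{h^0(G)/\rk(G):G\text{ semistable on }C,\ \deg(G)/\rk(G)\in(g-1-t_0,g-1+t_0)\}>0$. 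Since $\overline\Delta_H(E_n)\to0$, an effective restriction theorem (in the spirit of Mehta--Ramanathan and Langer) shows that for $n\gg0$ every Harder--Narasimhan factor of $E_n(H)|_C$ has slope in $(g-1-t_0,g-1+t_0)$; here the twist by $H$ is essential, as it moves the slope $\ch_1(E_n).H^2/r_n\in(0,H^3/n]$ of $E_n|_C$ up to $g-1+\ch_1(E_n).H^2/r_n$, which tends to $g-1$ from above. Applying the Brill--Noether bound to the HN factors yields $h^0(C,E_n(H)|_C)\leq(\chi(\cO_X(H))-\delta)\,r_n$.

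Finally one plays this against Riemann--Roch on $S$. Since
\[
\chi(S,E_n(H)|_S)\ =\ \ch_2(E_n).H+\tfrac12\ch_1(E_n).H^2+r_n\,\chi(\cO_X(H)),
\]
the failure inequality gives $\chi(S,E_n(H)|_S)\geq r_n\,\chi(\cO_X(H))$. On the other hand, the restriction sequence $0\to E_n|_S\to E_n(H)|_S\to E_n(H)|_C\to0$, Serre duality on $S$ (which identifies $(E_n(H)|_S)^\vee(K_S)=E_n^\vee|_S$), and a further restriction of $E_n|_S$ and $E_n^\vee|_S$ to $C$ — where one uses that $E_n(-H)|_S$ and $E_n^\vee(-H)|_S$ have negative slope bounded away from $0$ (since $H^3$ is fixed) and hence no global sections, together with a Clifford-type bound forcing the small-degree restrictions $E_n|_C$, $E_n^\vee|_C$ (degree $\pm\ch_1(E_n).H^2=o(r_n)$) to contribute only $o(r_n)$ sections — give an upper bound $\chi(S,E_n(H)|_S)<r_n\,\chi(\cO_X(H))$ for $n\gg0$. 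This is the required contradiction; hence $\mathbf{BG_3(\epsilon)}$ holds for some $\epsilon>0$, and the last assertion follows from Theorem~\ref{Thm-BG-BMT}.

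The hardest part is this last comparison: one must squeeze the various restriction sequences tightly enough that the Brill--Noether gap $\chi(\cO_X(H))-\bn_C$ genuinely closes the argument. In particular, the subtle point is to show that the auxiliary cohomology $h^0(S,E_n|_S)$ and $h^0(S,E_n^\vee|_S)$ are negligible compared to $r_n$ rather than comparable to it — this is where the $\mu_H$-stability of $E_n$ on the threefold $X$ (not merely the semistability of its restrictions) must be exploited, and it is precisely what fails for destabilized test objects such as extensions of line bundles by trivial bundles. One must also set up the effective restriction theorem and the semicontinuity of $\bn_C$ carefully so that the curve $C$ of the hypothesis may be taken general.
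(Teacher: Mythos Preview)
The decisive gap is your claim that $h^0(C,E_n|_C)$ and $h^0(C,E_n^\vee|_C)$ are $o(r_n)$. This is false. Even with perfect control of the HN filtration of $E_n|_C$ (all factors of slope in $[0,\epsilon]$), the best Clifford-type bound on a curve is $h^0\leq\deg/2+\rk$, so $h^0(C,E_n|_C)\leq(1+o(1))r_n$; likewise for the dual. Feeding this into your chain of inequalities gives at best
\[
\chi(S,E_n(H)|_S)\ \leq\ h^0(C,E_n|_C)+h^0(C,E_n(H)|_C)+h^0(C,E_n^\vee|_C)\ \leq\ \bigl(\bn_C+2+o(1)\bigr)\,r_n,
\]
which contradicts $\chi(S,E_n(H)|_S)\geq r_n\,\chi(\cO_X(H))$ only under the \emph{stronger} hypothesis $\bn_C<\chi(\cO_X(H))-2$. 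You correctly flagged this as the ``subtle point'' but did not resolve it; the $\mu_H$-stability of $E_n$ on $X$ does not help here, since the loss comes from the crude Clifford bound at slope near $0$, not from instability of the restriction. (The paper's Proposition~\ref{prop:curve-to-surface} is exactly this kind of argument, carried out on the surface rather than the threefold, and it only yields $\bn_C<\chi(\cO_S)-1$.)

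The paper closes the gap with a different key idea (Proposition~\ref{prop:curve-to-surface-smooth}): rather than bounding $\chi(F)$ via $h^0(F)+h^2(F)$ for a counterexample $F$ to $\mathbf{BG_2}$ on $S$, it bounds $\chi(F,F)$. Since $S$ is smooth, $F$ is locally free and
\[
-\Delta(F)+\chi(\cO_S)\,r^2\ =\ \chi(F,F)\ \leq\ 1+h^0(F\otimes F^\vee(H))\ \leq\ 2+h^0\bigl((F\otimes F^\vee(H))|_C\bigr).
\]
Now every HN factor of $(F\otimes F^\vee(H))|_C$ is of the form $F_i\otimes F_j^\vee(H_C)$ with slope $\mu(F_i)-\mu(F_j)+(g-1)$, hence concentrated near $g-1$; the $\bn_C$ bound applies to \emph{all} of them and gives $h^0\leq A\,r^2$. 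There is no parasitic ``$+r^2$'' term, so one obtains $-\Delta(F)/r^2+\chi(\cO_S)\leq A+2/r^2$, and since $r\to\infty$ the hypothesis $\bn_C<\chi(\cO_S)=\chi(\cO_X(H))$ suffices exactly. This self-tensor trick, trading $\chi(F)$ for $\chi(F,F)$, is the missing idea in your approach. Two minor points: the paper works with the \emph{given} $S$ and $C$, so no semicontinuity of $\bn_C$ is needed; and the passage from $\mathbf{BG_3}$ on $X$ to $\mathbf{BG_2}$ on $S$ is handled separately via tilt-stability and Feyzbakhsh's effective restriction (Proposition~\ref{prop:surface-to-3fold}), rather than by direct sheaf restriction.
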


When $H$ is basepoint-free, Theorem \ref{thm-intro-bn} together with Bertini's theorem immediately implies a purely topological criterion for \ref{conj:bmt} (cf.~Corollary \ref{cor:trivial-cor-basepoint-free}).
This criterion applies to the examples in \cite{chunyi:stability-condition-quintic-threefold,koseki:double-triple-solids,liu:bg-ineqaulity-quadratic}. For a version involving singular $S$ and $C$, we refer to Theorem \ref{thm:main-criterion}(b).

To obtain an upper bound for $\bn_C$ as required in Theorem \ref{thm-intro-bn}, we either use the explicit geometry of the curve $C$ or embed it into a suitable surface $S'$, where its Brill--Noether theory can be controlled. As a consequence, we can treat \ref{conj:bmt} uniformly for the following collection of Calabi--Yau threefolds.

\begin{Thm}\label{thm:intro-example}
Conjecture \emph{\ref{conj:bmt}} holds for $(X,H)$ and some $1$-cycle $\Gamma$ with $\Gamma.H\geq 0$ if it belongs to one of the following classes:

\begin{itemize}

 \item \emph{(Theorem \ref{thm:hypergeo}).} $X$ is a quasi-smooth complete intersection Calabi--Yau threefold in a weighted projective space and $H$ is the generator of $\Pic(X)$.
 
    \item \emph{(Theorem \ref{thm:cicy-in-fano}).} 
    $X$ is a general divisor in $|-K_M|$ of a Fano fourfold $M$ of index $r$ and $H = \left(-\tfrac{1}{r}K_M\right)\big|_X$, where either $r \geq 3$ or both $r = 2$ and $M$ has Picard number one.

    \item \emph{(Theorem \ref{thm:double-cover}).} $X$ is a cyclic cover $\pi \colon X \to Y$ of a Fano threefold $Y$ of index $r$ and $H = \pi^*\!\left(-\tfrac{1}{r}K_Y\right)$, where either $r\geq 2$ or both $r=1$ and $Y$ has Picard number one.
\end{itemize}
\end{Thm}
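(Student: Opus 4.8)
The plan is to reduce every case to the Brill--Noether criterion of Theorem~\ref{thm-intro-bn}, or, when $|H|$ and $|H|_S|$ fail to be base-point free (which happens for many weighted ambient spaces), to its singular refinement Theorem~\ref{thm:main-criterion}(b). In each class one takes $S$ to be a general member of $|H|$ and $C$ a general member of $|H|_S|$, i.e.\ $C=X\cap D_1\cap D_2$ for two general divisors $D_1,D_2$ in the linear system attached to $H$ in the ambient space. Since $K_X=\cO_X$, adjunction gives $K_S=(K_X+H)|_S=H|_S$ and hence $K_C=(K_S+C)|_C=\cO_C(2H)$, so the arithmetic genus of $C$ is $g=H^3+1$; Riemann--Roch on the Calabi--Yau threefold (using $\chi(\cO_X)=0$) gives $\chi(\cO_X(H))=\tfrac16 H^3+\tfrac1{12}\,c_2(X).H$, which by Kodaira vanishing equals $h^0(\cO_X(H))$. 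Thus in all three classes it suffices to prove
\[
\bn_C \ <\ \tfrac16 H^3+\tfrac1{12}\,c_2(X).H \ =\ \tfrac{g-1}{6}+\tfrac1{12}\,c_2(X).H .
\]

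To bound $\bn_C$ I would use two complementary inputs. First, a higher-rank Clifford-type inequality: for a stable bundle $E$ on $C$ of slope near $g-1$, both $E$ and its Serre dual $K_C\otimes E^\vee$ (which has the same slope and the same $h^0$) carry sections, and controlling the relevant multiplication map yields a bound of the shape $\bn_C\le\tfrac12\bigl(g+1-\operatorname{Cliff}(C)\bigr)$, in particular $\bn_C\le\tfrac{g+1}{2}$ unconditionally. When $H^3$ is small, or when $C$ has large Clifford index -- as for a non-degenerate complete intersection curve -- this already beats the target. Second, when $H^3$ is large one pins $C$ down by re-embedding it on an auxiliary surface $S'$ whose Brill--Noether geometry is understood -- a weighted projective surface, a del Pezzo surface, a Hirzebruch surface, or a K3 surface -- and pushes a stable $E$ forward along a low-degree map $C\to\PP^1$ or $C\to S'$, using semistability of $E$ to control the splitting type and hence $h^0(E)/\rk(E)$; on K3 surfaces this is governed by Lazarsfeld's theorem (the gonality of $C$ is computed by $S'$), on del Pezzo surfaces by the explicit cone of curves.

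These inputs are matched to the three families as follows. For quasi-smooth weighted complete intersection Calabi--Yau threefolds, $C$ is itself a quasi-smooth (Gorenstein) weighted complete intersection curve: one bounds its Clifford index from below using non-degeneracy and the bounded degrees of its defining equations, or re-embeds it on a weighted complete intersection surface, and then checks the displayed inequality over the finite list of admissible weight data. For $X$ a general divisor in $|-K_M|$ with $M$ a Fano fourfold of index $r$, two general members of $\bigl|-\tfrac1r K_M\bigr|$ cut out a surface $T=D_1\cap D_2\subset M$ with $-K_T=(r-2)\bigl(-\tfrac1r K_M\bigr)|_T$, so $T$ is a del Pezzo surface when $r\ge 3$ and a K3 surface when $r=2$, and $C\in|-K_M|_T|$; the hypotheses on $r$ (together with $\rho(M)=1$ when $r=2$, which keeps $\Pic(T)$, hence the gonality of $C$, under control) prevent $C$ from acquiring a pencil of very low degree, so $\bn_C$ is bounded as required, after which one concludes by the classification of Fano fourfolds of index $\ge 3$, and of index $2$ and Picard number one. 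For a cyclic cover $\pi\colon X\to Y$ of a Fano threefold $Y$ of index $r$, the map $\pi$ restricts to a degree-$n$ cyclic cover $C\to C_Y$, where $C_Y$ is the intersection of two general fundamental divisors of $Y$, lying on the del Pezzo surface $S_Y$ cut out by one of them (a K3 surface if $r=1$ and $\rho(Y)=1$); pushforward along $\pi|_C$ transfers the Brill--Noether bound from $C_Y$ to $C$ -- one checks that $\pi_*E$ again sits at slope $g(C_Y)-1$ on $C_Y$ -- and one finishes via the classification of Fano threefolds of index $\ge 2$, and of index $1$ and Picard number one.

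I expect the main obstacle to be quantitative: the target $\tfrac16 H^3+\tfrac1{12}\,c_2(X).H$ can be only slightly larger than $\tfrac{g+1}{2}$ -- this already fails, for instance, for the $(2,2,2,2)$ complete intersection in $\PP^7$, where the naive Clifford bound is just barely too weak -- so one genuinely needs a sharper bound on $\bn_C$, and, crucially, one valid for higher-rank stable bundles rather than merely line bundles; this is precisely where the extension of classical Brill--Noether theory developed in the earlier sections does the real work, and where most of the case analysis (including the use of the classification lists) is concentrated. A secondary hurdle is the non-smooth case: for many weighted ambient spaces $|H|$ has base points, so $S$ and $C$ cannot be chosen smooth, and the argument must be run with Gorenstein curves, the left-hand limit of Remark~\ref{rmk:left-limit}, and Theorem~\ref{thm:main-criterion}(b).
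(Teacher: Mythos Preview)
Your overall architecture is correct and matches the paper: reduce to the inequality $\bn_C<\chi(\cO_X(H))$ of Theorem~\ref{thm-intro-bn}, then bound $\bn_C$ by embedding $C$ in an auxiliary del~Pezzo or K3 surface. In particular, for anticanonical divisors in Fano fourfolds your description of the surface $T=D_1\cap D_2$ is exactly what the paper does, and the ``sharper higher-rank bound'' you anticipate is supplied by the wall-crossing estimates of Propositions~\ref{prop:delpezzo} and~\ref{prop:K3}.

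There are, however, two places where your route diverges from the paper and where your version has a gap.

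\emph{Weighted complete intersections.} You propose handling base points via the singular criterion Theorem~\ref{thm:main-criterion}(b). The paper avoids this entirely: whenever $|H|$ has base points (e.g.\ $X_{10}\subset\PP(1^3,2,5)$, $X_{6,6}\subset\PP(1^2,2^2,3^2)$, $X_{4,6}\subset\PP(1^3,2^2,3)$) it passes to $2H$, which is base-point free, proves \ref{BG3} for $(X,2H)$ via Corollary~\ref{cor:trivial-cor-basepoint-free} or Proposition~\ref{prop:K3}, and then descends to $(X,H)$ using Remark~\ref{rmk:basepoint-free}. This sidesteps the stricter inequality $\bn_C<\chi(\cO_X(H))-1$ required by part~(b) and any delicate singularity analysis.

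\emph{Cyclic covers.} Your pushforward idea is problematic: for a branched cover $\pi|_C\colon C\to C_Y$, the pushforward $\pi_*E$ of a stable bundle need not be semistable, so you cannot feed it into a bound for $\bn_{C_Y}$; and even granting semistability, the resulting inequality $h^0(E)/\rk(E)\le d\cdot\bn_{C_Y}$ is typically too weak. The paper instead exploits a key observation: for a degree-$d$ cyclic cover branched along $B\in|\tfrac{d}{d-1}(-K_Y)|$, the reduced pullback $(\pi^*B)_{\mathrm{red}}\cong B$ lies in $|H|$. Taking $S=(\pi^*B)_{\mathrm{red}}$, the curve $C\in|H_S|$ is \emph{itself} a complete intersection in $Y$, hence sits on a del~Pezzo (when $r=2$) or a K3 (when $r=1$, $\rho(Y)=1$) inside $Y$, and Propositions~\ref{prop:delpezzo} and~\ref{prop:K3} apply directly to $C$ with no pushforward needed. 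The remaining small-degree cases are handled by Lemmas~\ref{lem:weak-bound}--\ref{lem:cast-severi}.
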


We refer to Theorem \ref{thm:double-cover} and Example \ref{ex:high-pic-rk} for more examples of higher Picard numbers. 
We note that Theorem~\ref{thm-intro-bn} is not the only approach to proving \ref{BG3}. For instance, since the statement \ref{BG3} is preserved under taking \'etale covers and \'etale quotients, one may reduce the proof of \ref{BG3} to another simpler Calabi--Yau threefold (cf.~Remark \ref{rmk-etale} and Example \ref{ex:quotient}).

We conjecture that the inequality in Theorem \ref{thm-intro-bn} holds for every Calabi--Yau threefold whose Picard group is generated by a very ample line bundle, see Conjecture \ref{conj:bnC}.

\subsection*{Method of the proof}


In Theorem~\ref{Thm-BG-BMT}, we exploit the freedom to choose a $1$-cycle $\Gamma$ with $\Gamma. H$ sufficiently large. This flexibility allows us to show that a slight strengthening of the classical Bogomolov--Gieseker inequality near slope zero \ref{BG3} already suffices to prove \ref{conj:bmt}, in contrast to the stronger versions used in \cite{chunyi:stability-condition-quintic-threefold,koseki:double-triple-solids} for arbitrary stable sheaves.



The next step is to employ the restriction techniques of \cite{feyz:effective-restriction-theorem} to reduce the proof of \ref{BG3} to the corresponding inequality \hyperref[BGn]{\ensuremath{\mathbf{BG_2(\epsilon)}}} for some divisor $S \in |H|$; see Proposition~\ref{prop:surface-to-3fold-1}. The key difference from \cite{chunyi:stability-condition-quintic-threefold,koseki:double-triple-solids,liu:bg-ineqaulity-quadratic} is that there the restriction of a stable sheaf on $X$ is taken to a divisor in $|2H|$ to guarantee stability. In our case, since we only consider stable sheaves $E$ with slope in the small interval $(0,\epsilon]$, the slopes of the Harder--Narasimhan factors of $E|_S$ can be controlled in terms of $\epsilon$. This allows us to reduce the verification of \ref{BG3} to proving the inequality \hyperref[BGn]{\ensuremath{\mathbf{BG_2(\delta)}}} for some $S \in |H|$ and $\delta>0$.

The final step is to reduce the dimension once more by passing to a curve $C\in |H|_S|$. In this reduction, $\ch_2$ is replaced by the number of sections, and sheaves of slope near zero correspond to stable sheaves on $C$ with slope near $g(C)-1$. This leads to the definition of $\bn_C$ and, ultimately, to the proof of Theorem \ref{thm-intro-bn}.

\subsection*{Related works}
The original version of Bayer--Macr\`i--Toda conjecture was proposed in \cite[Conjecture 1.3.1]{bayer:bridgeland-stability-conditions-on-threefolds}, and its equivalent form was formulated in \cite[Conjecture 4.1]{bayer:the-space-of-stability-conditions-on-abelian-threefolds}. Currently, \cite[Conjecture 4.1]{bayer:the-space-of-stability-conditions-on-abelian-threefolds} has been proved for many examples, including \cite{bayer:the-space-of-stability-conditions-on-abelian-threefolds,chunyi:fano-3fold,koseki:nef-tangent,liu:bg-ineqaulity-quadratic,macri:projective-space,schmidt:quadric,koseki:product,maciocia:fm-transform-ii}. However, it was discovered in \cite{schmidt:counterexample,koseki:product} that \cite[Conjecture 1.3.1]{bayer:bridgeland-stability-conditions-on-threefolds} fails for certain threefolds. This issue was resolved in \cite[Question 2.4]{macri:stability-fano-3fold} by incorporating an additional $1$-cycle $\Gamma$ into the formulations of \cite[Conjecture 1.3.1]{bayer:bridgeland-stability-conditions-on-threefolds} and \cite[Conjecture 4.1]{bayer:the-space-of-stability-conditions-on-abelian-threefolds} (cf.~Conjecture \ref{conj:bmt}). This generalized version was proved for all Fano threefolds \cite{macri:stability-fano-3fold} and some weighted hypersurfaces \cite{koseki:double-triple-solids}. The only known Calabi--Yau examples for \ref{conj:bmt} are \cite{bayer:the-space-of-stability-conditions-on-abelian-threefolds,chunyi:stability-condition-quintic-threefold,koseki:double-triple-solids,liu:bg-ineqaulity-quadratic}. In this paper, we concentrate on the existence of $\Gamma$ such that \ref{conj:bmt} holds, which suffices for the existence of Bridgeland stability conditions and some applications, e.g.~\cite{feyz:rank-r-dt-theory-from-0,feyz:rank-r-dt-theory-from-1}, rather than minimizing $\Gamma.H$, which is important for other applications such as \cite{bbmt14,feyz:physics-abelian-dt,liu:cast-bound-quintic}.

Connections between stronger Bogomolov--Gieseker (BG) inequalities for $\ch_2$ of stable sheaves and variants of Conjecture \ref{conj:bmt} have been studied by \cite{chunyi:stability-condition-quintic-threefold,macri:stability-fano-3fold,chunyi:fano-3fold,koseki:double-triple-solids,koseki:hypersurface,liu:bg-ineqaulity-quadratic}. In \cite{chunyi:fano-3fold}, the Bayer--Macr\`i--Toda conjecture for a quintic threefold is reduced to verifying a stronger BG inequality for all stable sheaves on a $(2,5)$-complete intersection surface. This approach has also been applied to some other Calabi--Yau threefolds by \cite{koseki:double-triple-solids,liu:bg-ineqaulity-quadratic}. On the other hand, the inequality \ref{BG3} involves only stable sheaves of sufficiently small slopes, and hence should be regarded as a slight improvement of the classical BG inequality. Related conjectures for Calabi--Yau threefolds and their hyperplane sections have also been investigated in physics, such as \cite{yau:dry-conj}. It would be very interesting to find a physical derivation of \ref{BG3}.

Bounding numbers of global sections of stable bundles over curves is one of the main topics in the higher rank Brill--Noether theory; we refer to \cite{bigas:brill-noether-for-stable-vector-bundle} for a survey. The picture has been completed for sheaves with small slopes in \cite{newstead:geography-of-brill-noethr-loci,mercat:slope-smaller-2}, and for curves of low genus in \cite{lange:genus-4,lange:genus-5,lange:genus-6}. When a curve can be embedded into the plane, a del Pezzo surface, or a K3 surface, some bounds have been obtained by \cite{feyz-li:clifford-indices,chunyi:stability-condition-quintic-threefold,koseki:double-triple-solids,liu:bg-ineqaulity-quadratic} using wall-crossing of tilt-stability on surfaces. In turn, these results have been used to prove stronger BG inequalities in \cite{chunyi:stability-condition-quintic-threefold,koseki:double-triple-solids,liu:bg-ineqaulity-quadratic}. Thanks to the flexibility of \ref{BG3}, rather than establishing a Brill--Noether-type bound for stable bundles on a curve $C$ with slopes in a wide range as in previous works, we need only focus on an arbitrarily small neighborhood of slope $g(C)-1$.
These considerations lead to our definition of $\bn_C$. For example, even the most naive bound on $\bn_C$ already implies \ref{conj:bmt} for a series of Calabi--Yau threefolds (cf.~Corollary \ref{cor:trivial-cor-basepoint-free} and \ref{thm:hypergeo}), including the examples in \cite{chunyi:stability-condition-quintic-threefold,koseki:double-triple-solids,liu:bg-ineqaulity-quadratic}.

\subsection*{Organization}
In Section \ref{sec:pre}, we introduce a notion of Chern characters of coherent sheaves on singular schemes. This provides a necessary tool for running the machinery of tilt-stability on mildly singular varieties, as we review in Section \ref{subsec:tilt}. Then, in Section \ref{sec:bmt}, we study Conjecture \ref{conj:bmt} for a polarised Calabi--Yau threefold $(X, H)$ and prove Theorem \ref{Thm-BG-BMT} and \ref{thm-intro-bn} via the method described above. Next, in Section \ref{sec:bn-bound}, we first establish various upper bounds on the dimension of global sections of semistable sheaves on integral curves in Sections \ref{subsec:classical-bound} and \ref{subsec:bn-wall-cross}, and then, in Section \ref{subsec:bn-lower-bound}, we discuss lower bounds on $\bn_C$ and its relation to classical invariants. Based on the above results, the verification of Conjecture \ref{conj:bmt} for a series of Calabi--Yau threefolds as in Theorem \ref{thm:intro-example} is carried out in Section \ref{sec:app}.


\subsection*{Acknowledgments}

We would like to thank Arend Bayer, Gavril Farkas, Albrecht Klemm, Chunyi Li, Shengxuan Liu, Kenneth Ma, Emanuele Macr\`i, Rahul Pandharipande, Alexander Perry, Boris Pioline, Yongbin Ruan, Hao Max Sun, Richard Thomas, Yukinobu Toda, Chenyang Xu, and Xiaolei Zhao for many helpful discussions. This paper was completed while Z.L. was visiting the Institute for Theoretical Studies at ETH Z\"urich, the Mathematics Institute at the University of Warwick, and Imperial College London, whose hospitality he gratefully acknowledges. S.F. was supported by the Royal Society URF/R1/23119. N.K. and N.R. were supported by the Engineering and Physical Sciences Research Council EP/Y009959/1. Z.L. was supported by NSFC Grant 123B2002.

\section{Preliminaries}\label{sec:pre}

Throughout this paper, all schemes are defined over $\C$. In this section, we review the necessary background on Chern characters, slope-stability, and tilt-stability.

For a proper one-dimensional scheme $C$ over $\C$, its (arithmetic) \emph{genus} $g(C)$ is defined to be $1-\chi(\cO_C)$. A \emph{polarised scheme} $(X, H)$ is a scheme $X$ equipped with an ample divisor $H$. We denote the restriction $H|_Z$ of $H$ to a subscheme $Z\subset X$ by $H_Z$. The bounded derived category of coherent sheaves on $X$ is denoted by $\mathrm{D^b}(X)$. We write $h^i(-), \mathrm{hom}(-,-)$, and $\mathrm{ext}^i(-,-)$ for the dimensions of $H^i(-), \mathrm{Hom}(-,-)$, and $\mathrm{Ext}^i(-,-)$, respectively. For any $b\in \mathbb{R}$, we denote by $\lfloor b\rfloor$ the greatest integer that $\le b$.

\subsection{Chern characters}

Let $X$ be a quasi-projective scheme over $\C$. Write $\mathrm{K}_0(X)\coloneqq\mathrm{K}(\mathrm{D^b}(X))$ for the K-group of coherent sheaves and $\mathrm{K}^0(X)\coloneqq\mathrm{K}(\mathrm{Perf}(X))$ for the K-group of vector bundles. Write $\CH_k(X)$ for the Chow group of $k$-dimensional cycles in $X$, and set $$\CH_*(X)\coloneqq\bigoplus_{k} \CH_k(X) \quad \text{and} \quad\CH_*(X)_{\mathbb{F}}\coloneqq\CH_*(X)\otimes_{\Z}\mathbb{F}$$ for any field $\mathbb{F}$. As in \cite[Chapter 17]{fulton:intersection-theory}, we have the bivariant Chow rings $A^*(X)$ and $A^*(X)_{\mathbb{F}}\coloneqq A^*(X)\otimes_{\Z} \mathbb{F}$ which act on $\CH_*(X)$ and $\CH_*(X)_{\mathbb{F}}$, respectively, by the cap product $\cap$.

We use the notion of \emph{local complete intersection (lci)} morphisms as in \cite[\href{https://stacks.math.columbia.edu/tag/069F}{Tag 069F}]{stacks-project}. By \cite[\href{https://stacks.math.columbia.edu/tag/068E}{Tag 068E}]{stacks-project}, lci morphisms are stable under compositions and flat base change. According to \cite{avramov:lci}, a flat morphism $f\colon X\to Y$ of finite type between Noetherian schemes is lci if and only if the cotangent complex $\mathbb{L}_{X/Y}$ is perfect with Tor-amplitude in $[-1,0]$. 

\begin{Def}
Let $X$ be a quasi-projective lci scheme. The \emph{virtual Todd class of $X$} is defined as
\[\td(X)\coloneqq\td(\mathbb{T}_{X/\mathbb{C}})\in A^*(X)_{\Q},\]
where $\td(\mathbb{T}_{X/\mathbb{C}})\in A^*(X)_{\Q}$ is the Todd class of the tangent complex $$\mathbb{T}_{X/\mathbb{C}}\coloneqq R\hom_X(\mathbb{L}_{X/\mathbb{C}},\cO_X).$$
\end{Def}

\noindent In this case, the cotangent complex $\mathbb{L}_{X/\mathbb{C}}$ is represented by a bounded complex of vector bundles, making $\td(X)\in A^*(X)_{\Q}$ well-defined.

It is clear that when $X$ is smooth, the class $\td(X)$ recovers the usual Todd class of $X$ defined by its tangent bundle. More generally, the class $[\mathbb{T}_{X/\mathbb{C}}]\in \mathrm{K}^0(X)$ coincides with the virtual tangent bundle in the sense of \cite[B.7.6]{fulton:intersection-theory}, so $\td(X)$ agrees with the virtual Todd class used in \cite{fulton:intersection-theory}.

Recall that for any quasi-projective scheme $X$ over $\mathbb{C}$, there is a Riemann--Roch homomorphism
\[\tau_X\colon \mathrm{K}_0(X)\to \CH_*(X)_{\Q}\]
satisfying many functorial properties as in the smooth case (cf.~\cite[Theorem 18.3]{fulton:intersection-theory}). Since we can always take the formal inverse of a non-zero class in $A^*(X)_{\Q}$ (cf.~\cite[\href{https://stacks.math.columbia.edu/tag/0ESY}{Tag 0ESY}]{stacks-project}), motivated by the Grothendieck--Riemann--Roch Theorem, we define the Chern characters as follows.

\begin{Def}
Let $X$ be a quasi-projective lci scheme. For any class $\gamma\in \mathrm{K}_0(X)$, we define the \emph{Chern characters} of $\gamma$ as
\[\ch(\gamma)\coloneqq (\td(X))^{-1}\cap \tau_X(\gamma) \in \CH_*(X)_{\Q}.\]
\end{Def}

\noindent We write the component of $\ch(\gamma)$ of dimension $\dim X-k$ by 
\[\ch_k(\gamma)\in \CH_{\dim X-k}(X)_{\Q}.\]
In particular, we obtain a group homomorphism
\[\ch\colon \mathrm{K}_0(X)\to \CH_*(X)_{\Q}.\]
If we write $i\colon \mathrm{K}^0(X)\to \mathrm{K}_0(X)$ for the natural homomorphism, then, by \cite[Corollary 18.3.1(b)]{fulton:intersection-theory}, we have
\begin{equation*}
\ch(i(\cO_X))=[X].
\end{equation*}
Therefore, from \cite[Corollary 18.3(2)]{fulton:intersection-theory}, for any class $\gamma\in \mathrm{K}^0(X)$, we get $$\ch(\gamma)\cap [X]=\ch(i(\gamma))\in \CH_*(X)_{\Q},$$
where $\ch(\gamma)\in A^*(X)_{\Q}$ is the usual bivariant Chern character. Thus, by abuse of notation, we do not distinguish the usual bivariant Chern characters $\ch(\gamma)$ and $\ch(i(\gamma))$ defined above.

The notion of Chern characters satisfies the following general properties.

\begin{Lem}\label{lem:property-ch}
Let $X$ and $Y$ be quasi-projective lci schemes.

\begin{enumerate}

    \item If $f\colon X\to Y$ is proper, then for any class $\gamma\in \mathrm{K}_0(X)$, we have
    \[f_*(\td(X)\cap \ch(\gamma))=\td(Y)\cap \ch(f_*\gamma).\]

    \item If $f\colon X\to Y$ is lci (e.g.~$f$ is flat), then for any class $\gamma\in \mathrm{K}_0(Y)$, we have
    \[\ch(f^*\gamma)=f^*\ch(\gamma).\]

    \item If $\alpha\in \mathrm{K}^0(X)$ and $\gamma\in \mathrm{K}_0(X)$, then
    \[\ch(\alpha\otimes \gamma)=\ch(\alpha)\cap \ch(\gamma).\]
Here $\ch(\alpha)\in A^*(X)_{\Q}$ and $\ch(\gamma), \ch(\alpha\otimes \gamma)\in \CH_*(X)_{\Q}$.

    \item If $V\subset X$ is an equidimensional closed subscheme, then
    \[\ch(\cO_V)=[V]+\text{ terms of lower dimensions}.\]
\end{enumerate}

\end{Lem}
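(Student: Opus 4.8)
The statement to prove is Lemma~\ref{lem:property-ch}, listing four functorial properties of the Chern character on quasi-projective lci schemes. Let me sketch a proof plan.

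\medskip

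The plan is to deduce all four statements from the corresponding properties of the Riemann--Roch homomorphism $\tau_X$ recorded in \cite[Chapter 18]{fulton:intersection-theory}, together with the definition $\ch(\gamma)=\td(X)^{-1}\cap\tau_X(\gamma)$ and the multiplicativity of the Todd class. The first three parts are formal manipulations; part (iv) requires a small additional argument.

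\medskip

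\textbf{Part (i).} This is essentially the statement that $\tau$ commutes with proper pushforward, i.e.\ \cite[Theorem 18.3(1)]{fulton:intersection-theory}: $f_*(\tau_X(\gamma))=\tau_Y(f_*\gamma)$. Unwinding, $\td(X)\cap\ch(\gamma)=\tau_X(\gamma)$ by definition, so $f_*(\td(X)\cap\ch(\gamma))=f_*\tau_X(\gamma)=\tau_Y(f_*\gamma)=\td(Y)\cap\ch(f_*\gamma)$, which is exactly the claim. No use of the lci hypothesis is needed beyond making $\td$ well-defined.

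\medskip

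\textbf{Part (ii).} For $f$ lci we must show $\ch(f^*\gamma)=f^*\ch(\gamma)$. Write $\ch(f^*\gamma)=\td(X)^{-1}\cap\tau_X(f^*\gamma)$. The key input is the compatibility of $\tau$ with lci (in particular flat) pullback: for an lci morphism $f\colon X\to Y$ one has $\tau_X(f^*\gamma)=\td(\mathbb T_{X/Y})\cap f^*\tau_Y(\gamma)$, a virtual/relative version of \cite[Theorem 18.2]{fulton:intersection-theory} (the ``Riemann--Roch for lci morphisms''; see also \cite[B.7.6, Example 18.3.1]{fulton:intersection-theory}). Combining this with the distinguished triangle $f^*\mathbb L_{Y/\mathbb C}\to\mathbb L_{X/\mathbb C}\to\mathbb L_{X/Y}$, which gives $\td(X)=\td(\mathbb T_{X/Y})\cdot f^*\td(Y)$ in $A^*(X)_{\mathbb Q}$ by multiplicativity of the Todd class, we get $\ch(f^*\gamma)=\td(\mathbb T_{X/Y})^{-1}\cap f^*(\td(Y)^{-1})\cap\td(\mathbb T_{X/Y})\cap f^*\tau_Y(\gamma)=f^*(\td(Y)^{-1}\cap\tau_Y(\gamma))=f^*\ch(\gamma)$, using that cap products by bivariant classes commute and that $f^*$ is a ring map on $A^*$.

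\medskip

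\textbf{Part (iii).} This is the module/projection-formula property of $\tau$: for $\alpha\in\mathrm K^0(X)$ a perfect class and $\gamma\in\mathrm K_0(X)$, \cite[Theorem 18.3(2)]{fulton:intersection-theory} gives $\tau_X(\alpha\otimes\gamma)=\ch(\alpha)\cap\tau_X(\gamma)$, where $\ch(\alpha)\in A^*(X)_{\mathbb Q}$ is the bivariant Chern character. Dividing by $\td(X)$ (which commutes with the cap by $\ch(\alpha)$ since both live in the commutative bivariant ring $A^*(X)_{\mathbb Q}$) yields $\ch(\alpha\otimes\gamma)=\ch(\alpha)\cap\ch(\gamma)$.

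\medskip

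\textbf{Part (iv).} Here $V\subset X$ is an equidimensional closed subscheme, say of dimension $d$, and we claim $\ch(\cO_V)=[V]+(\text{lower-dimensional terms})$. First, $\tau_X(\cO_V)=\tau_V^X(\cO_V)$ where $\tau_V\colon\mathrm K_0(V)\to\CH_*(V)_{\mathbb Q}$ and we push forward along the closed immersion $i\colon V\hookrightarrow X$ (using part (i)/functoriality of $\tau$ for the proper map $i$); so it suffices to show $\tau_V(\cO_V)=[V]+(\text{lower})$ in $\CH_*(V)_{\mathbb Q}$ and then note that $\td(X)^{-1}=1+(\text{positive-codimension terms})$ only modifies the lower-dimensional part. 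The top-dimensional component of $\tau_V(\cO_V)$ is computed by the standard fact that $\tau$ sends $[\cO_V]$ to a class whose degree-$d$ part is the fundamental cycle $[V]$ --- this is \cite[Theorem 18.3(5)]{fulton:intersection-theory} (or Lemma 18.3 loc.\ cit.), valid for any scheme without smoothness or lci hypotheses on $V$ itself. Hence $\ch(\cO_V)=[V]+(\text{terms in }\CH_{<d}(X)_{\mathbb Q})$.

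\medskip

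\textbf{Main obstacle.} The routine parts are (i) and (iii), which are immediate from Fulton once the definitions are unwound. The subtle point is part (ii): one needs the correct relative Riemann--Roch statement for lci morphisms (with the relative Todd factor $\td(\mathbb T_{X/Y})$) together with the multiplicativity $\td(X)=\td(\mathbb T_{X/Y})\cdot f^*\td(Y)$ coming from the cotangent-complex triangle; verifying that this triangle is one of perfect complexes (so all Todd classes are invertible in $A^*_{\mathbb Q}$) is where the lci hypothesis on $X$, $Y$, and $f$ is really used. For part (iv) the only care needed is to keep track of which Chow groups the classes live in and to invoke the unconditional top-term formula for $\tau$ rather than anything requiring $V$ to be lci.
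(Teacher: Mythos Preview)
Your proposal is correct and follows essentially the same route as the paper: both derive all four parts from the functoriality properties of the Riemann--Roch map $\tau$ in \cite[Theorem 18.3]{fulton:intersection-theory} together with the definition $\ch=\td(X)^{-1}\cap\tau_X$, and both single out part (ii) as the place where the cotangent-complex triangle $[\mathbb T_{X/Y}]+[f^*\mathbb T_{Y/\C}]=[\mathbb T_{X/\C}]$ is needed to get the Todd multiplicativity. Your write-up is simply more explicit than the paper's one-line proof.
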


\begin{proof}
These results follow from the definition of $\ch(-)$ and \cite[Theorem 18.3]{fulton:intersection-theory}. Note that in part (b), if $f$ is flat, then it is lci by \cite[\href{https://stacks.math.columbia.edu/tag/09RL}{Tag 09RL}]{stacks-project}, and we have
\[
  [\mathbb{T}_{X/Y}]+[f^*\mathbb{T}_{Y/\C}]=[\mathbb{T}_{X/\C}]\in \mathrm{K}^0(X)
\]
by the standard exact triangle of cotangent complexes.
\end{proof}

\begin{Rem}\label{rmk:ch}
We comment on the properties of $\ch(-)$.
\begin{itemize}

\item Part (a) of Lemma \ref{lem:property-ch} should be understood as the Grothendieck--Riemann--Roch theorem for bounded complexes of coherent sheaves on lci schemes. In particular, if $Y$ is a point, then
\[\chi(\gamma)=\td(X)\cap \ch(\gamma)\in \Z\]
for any $\gamma\in \mathrm{K}_0(X)$, which is the lci version of Hirzebruch--Riemann--Roch.

\item When $f\colon X\to Y$ is an inclusion of an effective Cartier divisor, Lemma \ref{lem:property-ch}(b) shows that $\ch(-)$ is well-behaved under restriction.

\item When $Y$ is integral and $f\colon Y_{\mathrm{reg}}\to Y$ is the inclusion of the smooth part, Lemma \ref{lem:property-ch}(b) implies $\ch_0(\gamma)=\rk(\gamma)[Y]$ for any class $\gamma \in \mathrm{K}_0(Y)$, where $\rk(-)$ denotes the rank. By abuse of notation, we do not distinguish between $\ch_0(\gamma)$ and $\rk(\gamma)$. Furthermore, if $Y$ is normal, then $\ch_1(\gamma)\in \CH_{\dim Y-1}(Y)$ is the closure of the Cartier divisor $\ch_1(f^*\gamma)$ on the smooth locus $Y_{\mathrm{reg}}$.

\end{itemize}

\end{Rem}

\subsection{Slope-stability}

Next, we review the theory of semistable sheaves. We refer to \cite{huybrechts:geometry-of-moduli-space-of-sheaves} for a detailed introduction. 

Given a projective lci variety $X$ over $\C$ and an ample divisor $H$, 
we define the $H$-slope of a coherent sheaf $E$ on $X$ by
\[\mu_H(E)\coloneqq\frac{\ch_1(E).H^{\dim X-1}}{\ch_0(E)H^{\dim X}}\]
when $\ch_0(E)\neq 0$ and set $+\infty$ otherwise. Here, $Z.H^{\dim X-k}$ is defined as
\[Z.H^{\dim X-k}\coloneqq \int_X c_1(\cO_X(H))^{\dim X-k}\cap Z\in \Q\]
for any $k$-dimensional cycle $Z\in \CH_k(X)_{\Q}$. Note that if $X$ is normal, then $\ch_1(E)$ is a Weil divisor obtained as the closure of the corresponding divisor on the smooth locus of $X$ (cf.~Remark \ref{rmk:ch}). Therefore, we have $\ch_1(E).H^{\dim X-1}\in \Z$ in this case.

A torsion-free sheaf $E$ is \emph{$H$-(semi)stable} if, for any non-zero proper subsheaf $F\subset E$, we have
\[\mu_H(F)(\le) \mu_H(E/F),\]
where $(\le)$ denotes $<$ for stability and $\le$ for semistability. Every coherent sheaf admits a Harder--Narasimhan (HN) filtration, whose first factor is its torsion part and the other factors are $H$-semistable sheaves. We denote by $\mu^+_H(-)$ and $\mu^-_H(-)$ the slopes of the first and the last factor in the HN filtration, respectively.

If $X$ is also smooth or a normal projective surface, we define the \textit{discriminants} of a sheaf $E$ as follows: 
\begin{align*}
    &\Delta(E)\coloneqq\ch_1(E)^2-2\ch_0(E)\ch_2(E), \\
    &\Delta_H(E)\coloneqq(\ch_1(E).H^{\dim X-1})^2-2\ch_0(E)H^{\dim X}(\ch_2(E).H^{\dim X-2}).
\end{align*}
Note that when $X$ is a normal projective surface, $\ch_1(E)^2\in \mathbb{Q}$ is defined as Mumford's intersection number of Weil divisors. If $X$ is smooth, then the classical Bogomolov--Gieseker (BG) inequality gives
\begin{equation}\label{eq-delta}
    \Delta_H(E)\geq H^{\dim X}(\Delta(E).H^{\dim X -2}) \geq 0
\end{equation}
for any $H$-semistable torsion-free sheaf $E$ on $X$. More generally, by \cite{langer:normal-surface,nuer:bg-inequality-singular-surface}, this still holds if $X$ is a normal projective surface\footnote{Indeed, a version of BG inequality holds for normal projective lci varieties and normal crossing schemes. This allows us to generalize results in \cite{bayer:the-space-of-stability-conditions-on-abelian-threefolds,bayer:stability-conditions-in-families} to the singular setting in \cite{flm:degeneration}.} with rational Gorenstein (hence lci) singularities.

\subsection{Tilt-stability}\label{subsec:tilt}

In the rest of this section, we assume $X$ is either a smooth projective variety or a normal projective surface with rational Gorenstein singularities, and we fix an ample divisor $H$ on $X$. Set $n\coloneqq\dim X\geq 2$.

For any $b \in \mathbb{R}$, we define the tilted heart
\begin{equation*}
\Coh^b(X)\ \coloneqq \ \big\{E^{-1} \xrightarrow{\,d\,} E^0 \ \colon\ \mu_H^{+}(\ker d) \leq b \,,\  \mu_H^{-}(\cok d) > b \big\} \subset\mathrm{D^b}(X),
\end{equation*}
which is the heart of a bounded t-structure on $\mathrm{D^b}(X)$ by \cite[Lemma 6.1]{bridgeland:K3-surfaces}. 

For any object $E \in \Coh^b(X)$, we set
\begin{equation*}
\nu_{b,w}(E)\ \coloneqq\ \left\{\!\!\begin{array}{cc} \frac{\ch_2(E).H^{n-2} - w\ch_0(E)H^{n}}{\ch_1^{bH}(E).H^{n-1}}
 & \text{if }\ch_1^{bH}(E).H^{n-1}\ne0, \\
+\infty & \text{otherwise, } \end{array}\right.
\end{equation*}
where $\ch^{bH}(E)\coloneqq \exp(-bH).\ch(E)$ and $\ch^{bH}_k(E)$ is its codimension $k$ part.

\begin{Def}
Fix a pair $(b,w) \in U\coloneqq \{(b,w)\in \R^2\colon w>\frac{1}{2}b^2\}$. We say $E\in\mathrm{D^b}(X)$ is \emph{$\nu_{b,w}$-(semi)stable} if and only if
\begin{itemize}
\item $E[k]\in \Coh^b(X)$ for some $k\in\mathbb{Z}$, and
\item $\nu_{b,w}(F)\,(\le)\,\nu_{b,w}\big(E[k]/F\big)$ for all non-trivial subobjects $F\hookrightarrow E[k]$ in $\Coh^b(X)$.
\end{itemize}
\end{Def}

\noindent Analogous to $H$-stability of sheaves, for any $(b,w)\in U$ and non-zero object $E\in \Coh^b(X)$, there exists a filtration of $E$ consisting of subobjects of $E$ in $\Coh^b(X)$ such that each factor is $\nu_{b,w}$-semistable. We call this filtration the \emph{HN filtration of $E$ with respect to $\nu_{b,w}$}. For more details, we refer to \cite{bayer:bridgeland-stability-conditions-on-threefolds,bayer:the-space-of-stability-conditions-on-abelian-threefolds}.

By \cite[Theorem 3.5]{bayer:the-space-of-stability-conditions-on-abelian-threefolds}, if we plot the $(b,w)$-plane simultaneously with the image of the projection map
\begin{eqnarray*}
	\Pi\colon\ \mathrm{K}_0(X) \setminus \big\{v \colon \ch_0(v) = 0\big\}\! &\longrightarrow& \R^2, \\
	E &\ensuremath{\shortmid\joinrel\relbar\joinrel\rightarrow}& \!\!\bigg(\frac{\ch_1(v).H^{n-1}}{\ch_0(v)H^{n}}\,,\, \frac{\ch_2(v).H^{n-2}}{\ch_0(v)H^{n}}\bigg),
\end{eqnarray*}
then the image $\Pi(E)$ of a $\nu_{b,w}$-semistable object $E$ with $\ch_0(E)\ne 0$ is \emph{outside} $U$, i.e. $\Delta_H(E)\geq 0.$ Moreover, we have the following explicit description of a wall and chamber structure, explained in \cite[Proposition 4.1]{feyz:thomas-noether-loci} for instance.

\begin{Lem}[\textbf{Wall and chamber structure}]\label{locally finite set of walls}
	Fix $v\in \mathrm{K}_0(X)$ with $\Delta_H(v)\ge0$ and $$(\ch_{0}(v)H^n, \ch_1(v).H^{n-1}, \ch_2(v).H^{n-2})\ne (0,0,0).$$ There exists a set of lines $\{\ell_i\}_{i \in I}$ in $\mathbb{R}^2$ such that the segments $\ell_i\cap U$ (called ``\emph{walls of instability}") are locally finite and satisfy 
	\begin{itemize}
	    \item[\emph{(}a\emph{)}] If $\ch_0(v)\ne0$, then all lines $\ell_i$ pass through $\Pi(v)$.
	    \item[\emph{(}b\emph{)}] If $\ch_0(v)=0$, then all lines $\ell_i$ are parallel of slope $\frac{\ch_2(v).H^{n-2}}{\ch_1(v).H^{n-1}}$.
	   		\item[\emph{(}c\emph{)}] The $\nu_{b,w}$-(semi)stability of any $E\in\mathrm{D^b}(X)$ of class $v$ is unchanged as $(b,w)$ varies within any connected component (called a ``\emph{chamber}") of  $U \setminus \bigcup_{i \in I}\ell_i$.
		\item[\emph{(}d\emph{)}] For any wall $\ell_i\cap U$, there is an integer $k_i$ and a map $f\colon F\to E[k_i]$ in $\mathrm{D^b}(X)$ such that
\begin{itemize}
\item for any $(b,w) \in \ell_i \cap U$, the objects $E[k_i],\,F$ lie in the heart $\Coh^{b}(X)$,
\item $E$ is $\nu_{b,w}$-semistable of class $v$ with $\nu_{b,w}(E)=\nu_{b,w}(F)=\,\mathrm{slope}\,(\ell_i)$ constant on the wall $\ell_i \cap U$, and
\item $f$ is an injection $F\hookrightarrow E[k_i] $ in $\Coh^{b}(X)$ which strictly destabilizes $E[k_i]$ for $(b,w)$ in one of the two chambers adjacent to the wall $\ell_i$.
\end{itemize} 
	\end{itemize}
\end{Lem}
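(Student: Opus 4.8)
The plan is to reduce the whole statement to the three numerical invariants $N_k(E) \coloneqq \ch_k(E).H^{n-k}$ ($k=0,1,2$), through which $\nu_{b,w}$ visibly factors: for $E \in \Coh^b(X)$ with $\ch_1^{bH}(E).H^{n-1} \neq 0$ one has $\nu_{b,w}(E) = (N_2(E) - wN_0(E))/(N_1(E) - bN_0(E))$. For a class $u \in \mathrm{K}_0(X)$ I define the \emph{numerical wall} $W_u \subseteq U$ to be the locus where $\nu_{b,w}(u) = \nu_{b,w}(v)$ and both denominators are nonzero. The first step is the elementary computation that clearing denominators in $\nu_{b,w}(u) = \nu_{b,w}(v)$ gives
\begin{align*}
&(N_2(u)N_1(v) - N_2(v)N_1(u)) - b\,(N_2(u)N_0(v) - N_2(v)N_0(u)) \\
&\qquad - w\,(N_0(u)N_1(v) - N_0(v)N_1(u)) = 0.
\end{align*}
The quadratic $bw$-terms cancel, so this is affine-linear in $(b,w)$; hence $W_u$ is empty, all of $U$, or the intersection with $U$ of a line $\ell_u$. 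The $\ell_i$ appearing in the statement will be those $\ell_u$ that are actually crossed by a destabilizing subobject.

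Parts (a) and (b) then drop out of the displayed equation. For (a), with $\ch_0(v) \neq 0$ I substitute $(b,w) = \Pi(v) = (N_1(v)/N_0(v),\, N_2(v)/N_0(v))$ into the left-hand side; a one-line cancellation shows it vanishes identically, so every $\ell_u$ passes through $\Pi(v)$. For (b), with $\ch_0(v) = 0$, i.e.\ $N_0(v) = 0$, the equation becomes $N_0(u)\,(N_2(v)\,b - N_1(v)\,w) = N_2(v)N_1(u) - N_2(u)N_1(v)$: a subobject with $N_0(u) = 0$ has $\nu_{b,w}$ constant, hence either never or always equal to the (then also constant) $\nu_{b,w}(v)$, so it creates no wall; and for $N_0(u) \neq 0$ the line $\ell_u$ has slope $N_2(v)/N_1(v) = \ch_2(v).H^{n-2}/\ch_1(v).H^{n-1}$, independent of $u$.

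For local finiteness and (c): if $(b,w)$ lies on a wall for an object $E$ of class $v$, then $E$ is $\nu_{b,w}$-semistable and has a proper subobject $F \hookrightarrow E$ in $\Coh^b(X)$ with $\nu_{b,w}(F) = \nu_{b,w}(E)$; after passing to Jordan--H\"older factors we may take $F$ and $E/F$ to be $\nu_{b,w}$-semistable. The inequality $\Delta_H(-) \geq 0$ for tilt-semistable objects recalled just before the lemma then bounds $\Delta_H(F)$ and $\Delta_H(E/F)$ from below, and together with $0 \leq \ch_1^{bH}(F).H^{n-1} \leq \ch_1^{bH}(E).H^{n-1}$ — forced by $F \hookrightarrow E$ in $\Coh^b(X)$ — this confines the possible classes $[F]$ to a finite set over any fixed compact subset of $U$. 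Hence the segments $\ell_i \cap U$ are locally finite, and (c) follows in the usual way: no $\ell_i$ is crossed within a chamber, and a $\nu_{b,w}$-semistable object of class $v$ can only become unstable across a locus where the tilt-slope of some sub- or quotient object meets its own, i.e.\ across some $\ell_i$.

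For (d), fix a wall $\ell_i \cap U$, a point $(b_0,w_0)$ on it, and an object $E$ of class $v$ that is $\nu_{b_0,w_0}$-semistable but strictly destabilized in one of the two adjacent chambers. The maximal destabilizing subobject of $E[k_i]$ computed just inside that chamber converges, as $(b,w) \to (b_0,w_0)$, to a subobject $f \colon F \hookrightarrow E[k_i]$ in $\Coh^{b_0}(X)$ with $\nu_{b_0,w_0}(F) = \nu_{b_0,w_0}(E[k_i])$. Since the equal-slope condition is exactly the linear equation of $\ell_i$, it holds along the entire segment; and since $\Coh^b(X)$ depends only on $b$ and no finer wall separates $F$ from $E[k_i]$ along $\ell_i$, both objects stay in $\Coh^b(X)$ and $\nu_{b,w}$-semistable of equal slope there — which is (d). I expect this last step, and the boundedness input for local finiteness, to be the main obstacle: one must genuinely control families of tilt-semistable objects and verify the destabilizing pair neither leaves the heart nor breaks semistability away from $(b_0,w_0)$. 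This is the wall-crossing machinery of \cite{bayer:the-space-of-stability-conditions-on-abelian-threefolds} (see also \cite[Proposition 4.1]{feyz:thomas-noether-loci}), which in a full write-up I would invoke rather than reprove; the only ingredient specific to the case where $X$ is a normal Gorenstein surface is the Bogomolov inequality there, available from \cite{langer:normal-surface, nuer:bg-inequality-singular-surface}.
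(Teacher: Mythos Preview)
The paper does not give its own proof of this lemma: it is stated as background, with the remark that it is ``explained in \cite[Proposition 4.1]{feyz:thomas-noether-loci} for instance'' immediately preceding the statement. Your sketch is the standard argument behind that reference (linearity of the numerical wall equation, boundedness of destabilizing classes via $\Delta_H\ge 0$ and the $\ch_1^{bH}$-inequality, and the usual wall-crossing analysis), and you even identify the same citation at the end; so your proposal is correct and aligned with what the paper invokes, just more detailed than the paper itself chooses to be.
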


Finally, we recall the notion of Brill--Noether stability. 

\begin{Def}
For an object $E\in \mathrm{D^b}(X)$, we say $E$ is \emph{Brill--Noether (BN) stable} if $E$ is $\nu_{b,w}$-stable for any $(b,w)$ in an open subset
\[\left\{(b,w)\colon b^2+w^2<\delta, w>\frac{1}{2}b^2\right\}\]
for some $\delta>0$. We say $E$ is \emph{BN-semistable} if $E$ is $\nu_{0,w}$-semistable for every $0<w \ll 1$. 
\end{Def}

\noindent By Lemma \ref{locally finite set of walls}, if $\ch_0(E)\neq 0$ and $\ch_2(E).H^{n-2}\neq 0$, then $E$ is BN-stable if and only if it is $\nu_{b,w}$-stable for some $(b,w) \in U$ proportional to $\Pi(E)$. If $E$ is BN-semistable, then $E$ is $\nu_{b,w}$-semistable for some $(b,w) \in U$ proportional to $\Pi(E)$. 
For any object $E\in \Coh^0(X)$, we define its Brill--Noether slope as
\[\nu_{BN}(E)\coloneqq\frac{\ch_2(E).H^{n-2}}{\ch_1(E).H^{n-1}}\]
when $\ch_1(E).H^{n-1}\neq 0$ and set $+\infty$ otherwise.

We need the following property of BN-stable objects.

\begin{Lem}\label{lem:vanish-hom-bn}
Let $(X, H)$ be a polarised smooth projective threefold with $K_X.H^2\leq 0$. For any object $E\in \Coh^0(X)$, we have
\[\chi(E)\leq \mathrm{hom}(\cO_X, E)+\mathrm{ext}^2(\cO_X, E).\]
Furthermore, if $K_X.H^2=K_X^2.H=0$ and $E$ is BN-stable with $\nu_{BN}(E)>0$, then
\[\chi(E)\leq \mathrm{hom}(\cO_X, E).\]
\end{Lem}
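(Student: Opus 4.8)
The plan is to compare the Euler characteristic $\chi(E) = \sum_i (-1)^i \mathrm{ext}^i(\cO_X, E)$ with $\mathrm{hom}(\cO_X, E) + \mathrm{ext}^2(\cO_X, E)$ by killing the remaining two terms. For the first inequality, the only thing that needs explaining is why $\mathrm{ext}^3(\cO_X, E) = 0$, since then $\chi(E) = \mathrm{hom}(\cO_X,E) - \mathrm{ext}^1(\cO_X,E) + \mathrm{ext}^2(\cO_X,E) \le \mathrm{hom}(\cO_X,E) + \mathrm{ext}^2(\cO_X,E)$. By Serre duality on the threefold $X$, $\mathrm{Ext}^3(\cO_X, E) \cong \mathrm{Hom}(E, \cO_X(K_X))^\vee$. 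Now $E \in \Coh^0(X)$, so $E$ sits in a triangle with cohomology sheaves $\mathcal H^{-1}(E), \mathcal H^0(E)$ where $\mathcal H^{-1}(E)$ is a torsion-free sheaf with $\mu_H^+ \le 0$ and $\mathcal H^0(E)$ has $\mu_H^- > 0$ (on its torsion-free part). First I would argue $\mathrm{Hom}(E, \cO_X(K_X)) = 0$: any nonzero map factors through $\mathcal H^0(E) \to \cO_X(K_X)$ (since $\mathrm{Hom}(\mathcal H^{-1}(E)[1], \cO_X(K_X)) = 0$ as $\cO_X(K_X)$ is a sheaf), and the image would be a subsheaf of $\cO_X(K_X)$, hence torsion-free of slope $\mu_H \le \mu_H(\cO_X(K_X)) = K_X.H^2/H^3 \le 0$; but it is also a nonzero quotient of $\mathcal H^0(E)$, so a quotient of the torsion-free part of $\mathcal H^0(E)$, which forces its slope to be $> 0$ — contradiction. (If the image is torsion, it still cannot be a subsheaf of the torsion-free sheaf $\cO_X(K_X)$ unless it is zero.) Hence $\mathrm{ext}^3(\cO_X,E) = 0$, giving the first inequality.

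For the second inequality, under the additional hypothesis $K_X.H^2 = K_X^2.H = 0$ and $E$ BN-stable with $\nu_{BN}(E) > 0$, I need to also show $\mathrm{ext}^2(\cO_X, E) = 0$, i.e.\ $\mathrm{Hom}(E, \cO_X(K_X)[1]) = 0$ by Serre duality, equivalently $\mathrm{Hom}(E, \cO_X(K_X)[1]) = \mathrm{Ext}^1(E, \cO_X(K_X)) = 0$. The idea is to interpret $\cO_X(K_X)[1]$ or rather the relevant object as living in $\Coh^0(X)$ and to use $\nu_{b,w}$-stability. Concretely: since $K_X.H^2 = 0$, the line bundle $\cO_X(K_X)$ has $\mu_H = 0$, so $\cO_X(K_X)[1] \in \Coh^0(X)$ (as $\mu_H^+(\cO_X(K_X)) = 0 \le 0$, and there is no $\mathcal H^0$ part), and moreover $\ch_1^{0}(\cO_X(K_X)[1]).H^2 = -K_X.H^2 = 0$, so $\nu_{0,w}(\cO_X(K_X)[1]) = +\infty$ for all $w$. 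Meanwhile $E$ is BN-stable, hence $\nu_{b_0,w_0}$-stable for some $(b_0,w_0)$ proportional to $\Pi(E)$ with $\nu_{b_0,w_0}(E) = \ch_2(E).H/\ch_1^{b_0 H}(E).H^2$ finite (here $\nu_{BN}(E) > 0$ guarantees $\ch_1(E).H^2 \ne 0$, so by the remark after the BN-stability definition $E$ is $\nu_{b,w}$-stable on a ray proportional to $\Pi(E)$; one also checks $\ch_1^{b_0 H}(E).H^2 \ne 0$ there). Then any nonzero map $E \to \cO_X(K_X)[1]$ in $\Coh^{b_0}(X)$ would violate $\nu_{b_0,w_0}$-stability of $E$ since the target has strictly larger ($=+\infty$) slope while being a quotient object — more precisely I would pass to the HN/JH factors of $\cO_X(K_X)[1]$ with respect to $\nu_{b_0,w_0}$, which all have slope $\ge$ something large or $+\infty$, and conclude $\mathrm{Hom}(E, \cO_X(K_X)[1]) = 0$ from semistability comparisons. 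Combined with the vanishing of $\mathrm{ext}^3$, this gives $\chi(E) = \mathrm{hom}(\cO_X,E) - \mathrm{ext}^1(\cO_X,E) \le \mathrm{hom}(\cO_X, E)$.

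The main obstacle I anticipate is the bookkeeping in the second part: making precise that $\cO_X(K_X)[1]$ (or its $\nu_{b,w}$-semistable factors) has slope strictly above $\nu_{b_0,w_0}(E)$ for a $(b_0,w_0)$ at which $E$ is genuinely stable, and ruling out the degenerate possibility that $\ch_1^{b_0 H}(E).H^2 = 0$ (which would make $\nu_{b_0,w_0}(E) = +\infty$ too and break the argument). This is where the hypothesis $\nu_{BN}(E) > 0$ must be used carefully — it ensures $E$ is stable along a ray approaching the origin with $\ch_1^{bH}(E).H^2 \ne 0$ and $\ch_2(E).H^{n-2} \ne 0$, so that $\nu_{b_0,w_0}(E)$ is a finite positive-ish number, strictly less than the $+\infty$ slope of the target, and the Hom-vanishing goes through. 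The condition $K_X^2.H = 0$ is presumably needed to control $\ch_2$ of $\cO_X(K_X)$ so that all its $\nu$-semistable factors (not just the object itself) have the right slope; I would verify $\ch_2(\cO_X(K_X)).H = \tfrac12 K_X^2.H = 0$ and feed this into the wall-and-chamber analysis of Lemma \ref{locally finite set of walls} applied to the class of $\cO_X(K_X)[1]$.
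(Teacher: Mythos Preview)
Your argument for the first inequality is correct and essentially matches the paper: both amount to observing that $\cO_X(K_X)[1]\in\Coh^0(X)$ (since $\mu_H(\cO_X(K_X))=K_X.H^2/H^3\le 0$), which forces $\mathrm{Hom}(E,\cO_X(K_X))=\mathrm{Ext}^3(\cO_X,E)^\vee=0$. (You omit the possible $\mathrm{Ext}^{-1}(\cO_X,E)$ term, but it only helps the inequality.)

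The second part has a genuine gap: the direction of your slope comparison is reversed. For $\nu$-semistable $A,B$ one has $\mathrm{Hom}(A,B)=0$ when $\nu(A)>\nu(B)$, not when $\nu(B)>\nu(A)$. So observing that $\nu_{0,w}(\cO_X(K_X)[1])=+\infty$ gives no vanishing; it is exactly the wrong direction. The paper fixes this by moving to a point $(b_0,w_0)$ with $b_0>0$ small. The hypotheses $K_X.H^2=K_X^2.H=0$ give $\ch_1(\cO_X(K_X)[1]).H^2=0$ and $\ch_2(\cO_X(K_X)[1]).H=0$, so $\Pi(\cO_X(K_X)[1])=(0,0)$ and $\cO_X(K_X)[1]$ is $\nu_{b,w}$-stable for every $b\ge 0$, $w>\tfrac12 b^2$, with finite slope
\[
\nu_{b_0,w_0}(\cO_X(K_X)[1])=\frac{w_0}{b_0}.
\]
Since $\nu_{BN}(E)>0$, one can choose $(b_0,w_0)$ in the BN-stability region of $E$ with $0<w_0/b_0<\nu_{BN}(E)$; then $\nu_{b_0,w_0}(\cO_X(K_X)[1])=w_0/b_0<\nu_{b_0,w_0}(E)$, and stability of both objects at $(b_0,w_0)$ yields $\mathrm{Hom}(E,\cO_X(K_X)[1])=\mathrm{Ext}^2(\cO_X,E)^\vee=0$. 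Your last paragraph correctly guesses that $K_X^2.H=0$ is needed to control $\ch_2$, but the payoff is not that the target has slope $+\infty$---it is precisely that the target has \emph{small finite} slope at $b_0>0$, allowing the comparison $\nu(E)>\nu(\cO_X(K_X)[1])$ to go through.
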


\begin{proof}
Since $\cO_X[1], E\in \Coh^0(X)$, we have
\[\mathrm{Hom}(\cO_X[1],E[-i])=0\]
for any $i>0$. As $K_X.H^2\leq 0$, we have $K_X[1]\in \Coh^0(X)$, which gives
\[\mathrm{Hom}(E,K_X[1-i])=\mathrm{Hom}(\cO_X,E[i+2])=0\]
for any $i>0$. These two vanishing results together imply
\[\chi(E)\leq \mathrm{hom}(\cO_X, E)+\mathrm{ext}^2(\cO_X, E).\]

When $K_X.H^2=K_X^2.H=0$, we know that $K_X[1]\in \Coh^b(X)$ is $\nu_{b,w}$-stable for any $b\geq 0$ and $w> \frac{1}{2}b^2$. From the BN-stability of $E$ and $\nu_{BN}(E)>0$, we can find $(b_0,w_0)\in U$ with $b_0>0$ sufficiently small such that $E\in \Coh^{b_0}(X)$ is $\nu_{b_0,w_0}$-stable and
\begin{equation}\label{eq-lem-sec-2}
    0<\frac{w_0}{b_0}<\nu_{BN}(E).
\end{equation}
Since \eqref{eq-lem-sec-2} implies
\[\nu_{b_0,w_0}(K_X[1])=\frac{w_0}{b_0}<\nu_{b_0,w_0}(E),\]
we get $\mathrm{Hom}(E,K_X[1])=\mathrm{Ext}^2(\cO_X, E)=0$ and the second inequality follows.
\end{proof}

The following helps us to control the slope of Jordan--H\"older factors of a semistable object.

\begin{Lem}\label{lem:JH-factor}
Fix $b\in \Z$ and $w_0>\frac{1}{2}b^2$. Let $E\in \Coh^b(X)$ be a strictly $\nu_{b,w_0}$-semistable object with $\ch_0(E)\neq 0$. Suppose that $\mu_H(E)>b$ (resp.~$\mu_H(E)<b$). Then there exists a Jordan--H\"older factor $F$ of $E$ with respect to $\nu_{b,w_0}$ such that $\mu_H(F)\in (b, \mu_H(E)]$ (resp.~$\mu_H(F)\in [\mu_H(E),b)$) and $\ch^{bH}_1(F).H^{n-1}<\ch_1^{bH}(E).H^{n-1}$.
\end{Lem}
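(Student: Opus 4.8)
The plan is to analyze the Jordan--Hölder (JH) factors of $E$ with respect to $\nu_{b,w_0}$ and track their $\ch_1^{bH}(-).H^{n-1}$ values. First I would recall that since $E \in \Coh^b(X)$ with $\ch_0(E) \neq 0$ and $\mu_H(E) > b$, we have $\ch_1^{bH}(E).H^{n-1} = \ch_1(E).H^{n-1} - b\,\ch_0(E)H^n > 0$, so $\nu_{b,w_0}(E)$ is finite. Write the JH filtration as $F_1, \dots, F_m$ with $\nu_{b,w_0}(F_i) = \nu_{b,w_0}(E) =: \nu$ for all $i$, and recall the standard fact (from the definition of $\Coh^b(X)$) that every subobject and quotient in $\Coh^b(X)$ has $\ch_1^{bH}(-).H^{n-1} \geq 0$; since $\ch_1^{bH}(E).H^{n-1} = \sum_i \ch_1^{bH}(F_i).H^{n-1}$, each summand is nonnegative, and because $E$ is strictly semistable ($m \geq 2$) and $\ch_1^{bH}(E).H^{n-1} > 0$, at least one factor $F$ has $0 < \ch_1^{bH}(F).H^{n-1} < \ch_1^{bH}(E).H^{n-1}$; among those, I would be careful to pick one with $\ch_0(F) \neq 0$ (a factor with $\ch_0 = 0$ and $\ch_1^{bH}.H^{n-1} > 0$ has $\nu_{b,w_0}$ equal to the slope of a torsion-ish object, and one checks this forces $\nu = +\infty$ or a contradiction with $\nu$ finite — this needs a short argument, or alternatively one observes the $\ch_0 = 0$ factors all have the same finite $\nu$, handled below).

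The remaining point is to show the chosen $F$ (with $\ch_0(F) \neq 0$, $0 < \ch_1^{bH}(F).H^{n-1} < \ch_1^{bH}(E).H^{n-1}$) satisfies $\mu_H(F) \in (b, \mu_H(E)]$. The lower bound $\mu_H(F) > b$ is immediate from $\ch_1^{bH}(F).H^{n-1} > 0$ and $\ch_0(F) > 0$ — but I must first check $\ch_0(F) > 0$ rather than $< 0$: an object of $\Coh^b(X)$ with nonzero rank and $\ch_1^{bH}.H^{n-1} \geq 0$ has $\ch_0 > 0$ unless it is (a shift of) a torsion object, which is excluded since $\ch_0(F) \neq 0$; more precisely the HN factors of $\ker$ and $\cok$ controlling membership in $\Coh^b$ give $\mu_H$-bounds that force the sign. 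For the upper bound $\mu_H(F) \leq \mu_H(E)$: if not, then $\mu_H(F) > \mu_H(E) > b$, and I would derive a contradiction by combining $\nu_{b,w_0}(F) = \nu_{b,w_0}(E)$ with the fact that on the line through $\Pi(E)$ and $\Pi(F)$ (both outside $U$, by the BG inequality from \cite[Theorem 3.5]{bayer:the-space-of-stability-conditions-on-abelian-threefolds}) the constraint $w_0 > b^2/2$, i.e. $(b,w_0) \in U$, together with $\nu_{b,w_0}(E) = \nu_{b,w_0}(F)$ pins down the geometry: rewriting $\nu_{b,w_0}(G) = \tfrac{\ch_2(G).H^{n-2} - w_0 \ch_0(G)H^n}{\ch_1(G).H^{n-1} - b\,\ch_0(G)H^n}$ and using that the "center of mass" $\Pi(E)$ is a convex combination of the $\Pi(F_i)$ with positive weights $\ch_1^{bH}(F_i).H^{n-1}/\ch_1^{bH}(E).H^{n-1}$, one sees the $b$-coordinates $\mu_H(F_i)$ cannot all lie strictly on one side of $\mu_H(E)$; so some factor has $\mu_H \leq \mu_H(E)$, and a small additional argument (comparing $\nu$-values) shows we may take this to be the same $F$ or combine the two selection criteria. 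The symmetric case $\mu_H(E) < b$ is identical after replacing $E$ by a shift or reversing inequalities.

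I expect the main obstacle to be the simultaneous bookkeeping: ensuring a \emph{single} JH factor $F$ meets \emph{both} $\ch_1^{bH}(F).H^{n-1} < \ch_1^{bH}(E).H^{n-1}$ \emph{and} $\mu_H(F) \in (b, \mu_H(E)]$, rather than having one factor satisfy each. The clean way around this is the convexity/center-of-mass observation: express $(\mu_H(E), \nu_{BN}\text{-type data})$-coordinates as the positive-weight average of the factors' coordinates along the common $\nu_{b,w_0}$-level; then the factor achieving $\mu_H(F) \leq \mu_H(E)$ automatically has $\ch_1^{bH}(F).H^{n-1}$ strictly less than the total (since there are $\geq 2$ factors, all with positive $\ch_1^{bH}.H^{n-1}$ weight, unless a factor has $\ch_0 = 0$, which must then be ruled out or absorbed). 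Handling the possible $\ch_0 = 0$ factors — showing they cannot carry all of $\ch_1^{bH}.H^{n-1}$ and that removing them still leaves a valid choice — is the one genuinely fiddly point, and I would dispatch it by noting a $\nu_{b,w_0}$-semistable factor with $\ch_0 = 0$ and $\ch_1^{bH}.H^{n-1} > 0$ has $\nu_{b,w_0}$ equal to $\tfrac{\ch_2.H^{n-2}}{\ch_1.H^{n-1}}$, which being equal to the finite $\nu_{b,w_0}(E)$ is fine, but such a factor contributes nothing to $\ch_0$ and hence the rank-carrying factors still have $\ch_1^{bH}.H^{n-1}$ summing to something positive and strictly less than the total once at least one torsion-free factor is separated off — and at least one such exists since $\ch_0(E) \neq 0$.
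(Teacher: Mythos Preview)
Your overall strategy is right --- analyze the JH factors and run an averaging argument --- but there is a genuine error that would derail the plan as written. You assert that an object of $\Coh^b(X)$ with nonzero rank and $\ch_1^{bH}.H^{n-1}\ge 0$ must have $\ch_0>0$. This is false: with $b=0$, for instance, $\cO_X(-H)[1]\in\Coh^0(X)$ has $\ch_0=-1$ and $\ch_1.H^{n-1}=H^n>0$. Jordan--H\"older factors of $E$ can certainly have negative rank, so you cannot conclude $\mu_H(F)>b$ from $\ch_1^{bH}(F).H^{n-1}>0$ until you know the sign of $\ch_0(F)$. Relatedly, your convexity claim is off: $\Pi(E)$ is an \emph{affine} combination of the $\Pi(F_i)$ with weights $\ch_0(F_i)/\ch_0(E)$ (when the ranks are nonzero), not with the positive weights $\ch_1^{bH}(F_i).H^{n-1}/\ch_1^{bH}(E).H^{n-1}$; since those weights can be negative, you cannot infer that the $\mu_H(F_i)$ straddle $\mu_H(E)$ on convexity grounds alone.

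The clean fix --- which is exactly what the paper does --- is to partition the factors by the sign of $\ch_0$. After twisting by $-bH$ to reduce to $b=0$, every factor satisfies $0<\ch_1(F_k).H^{n-1}<\ch_1(E).H^{n-1}$ (as you observe, since $\nu_{0,w_0}(E)$ is finite). Now restrict to those $F_k$ with $\mathrm{sign}(\mu_H(E))\,\ch_0(F_k)>0$: their total $\mathrm{sign}(\mu_H(E))\,\ch_0$ is at least $\mathrm{sign}(\mu_H(E))\,\ch_0(E)$ (the remaining factors contribute with the wrong sign or zero), while their total $\ch_1.H^{n-1}$ is at most $\ch_1(E).H^{n-1}$ (all summands being positive). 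A one-line pigeonhole on slopes within this subset then produces a single factor $F$ with $\mu_H(F)\in(0,\mu_H(E)]$ (resp.\ $[\mu_H(E),0)$), and the bound on $\ch_1^{bH}(F).H^{n-1}$ is automatic. The $\ch_0=0$ factors you worry about are not an obstacle: they simply fall outside this subset and are never candidates for $F$.
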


\begin{proof}
By replacing $E$ with $E(-bH)$, we may assume that $b=0$. Since $E\in \Coh^0(X)$ and $\mu_H(E)\neq 0$, we have $\ch_1(E).H^{n-1}>0$. Therefore, the Jordan--H\"older factors $F_1,\cdots, F_m$ of $E$ with respect to $\nu_{0,w_0}$ satisfy $$0<\ch_1(F_k).H^{n-1}<\ch_1(E).H^{n-1}$$ for any $1\leq k\leq m$.

From the assumption, we get $\mathrm{sign}(\mu_H(E))\ch_0(E)>0$, where $\mathrm{sign}(a)\coloneqq 1$ if $a>0$ and $\mathrm{sign}(a)\coloneqq -1$ if $a<0$. After reordering, we may assume that $\mathrm{sign}(\mu_H(E))\ch_0(F_k)>0$ if and only if $1\leq k\leq m_0$, where $m_0\leq m$. Thus, we see
\[\sum^{m_0}_{k=1} \mathrm{sign}(\mu_H(E))\ch_0(F_k)\geq \mathrm{sign}(\mu_H(E))\ch_0(E)\quad\] 
and
\[\sum^{m_0}_{k=1} \ch_1(F_k).H^{n-1}\leq \ch_1(E).H^{n-1}.\]
In particular, we obtain
\[\frac{\sum^{m_0}_{k=1} \ch_1(F_k).H^{n-1}}{\sum^{m_0}_{k=1} \mathrm{sign}(\mu_H(E))\ch_0(F_k)H^n}\leq \mathrm{sign}(\mu_H(E))\mu_H(E),\]
and the result follows.
\end{proof}

\subsection{Bridgeland stability conditions} In \cite[Conjecture 2.7]{bayer:bridgeland-stability-conditions-on-threefolds}, Bayer–Macr\`i–Toda proposed a Bogomolov--Gieseker-type inequality involving $\ch_3$ for $\nu_{b,w}$-semistable objects. It was subsequently extended in \cite{bayer:the-space-of-stability-conditions-on-abelian-threefolds,macri:stability-fano-3fold, bayer-macri:icm-report}. One of the formulations is as follows:
\begin{Conj} [{$\Gamma$-generalized BMT inequality}]\label{conj:bmt}
Let $(X,H)$ be a polarised smooth projective threefold. There exists a $1$-cycle $\Gamma\in \CH_1(X)_{\mathbb{R}}$ with $\Gamma. H \ge 0$ such that, for any
\[
w>\tfrac12 b^2+\tfrac12\big(b-\lfloor b\rfloor\big)\big(1-b+\lfloor b\rfloor\big)
\]
and any $\nu_{b,w}$-semistable object $E$, we have
\begin{align*}
0 \leq Q^{\Gamma}_{b,w}(E)
&\coloneqq (2w-b^2)\!\left(\Delta_H(E)+3\,\frac{\Gamma. H}{H^3}\big(\ch_0^{bH}(E)H^3\big)^2\right) \\
&\quad +\,2\big(\ch_2^{bH}(E). H\big)\!\left(2\,\ch_2^{bH}(E). H-3\,(\Gamma. H)\,\ch_0^{bH}(E)\right) \\
&\quad -\,6\big(\ch_1^{bH}(E). H^2\big)\!\left(\ch_3^{bH}(E)-\Gamma. \ch_1^{bH}(E)\right). 
\end{align*}
\end{Conj}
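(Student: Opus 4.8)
The plan is to carry out the three-step dimensional descent sketched in the introduction, the first step --- passing from the $\Gamma$-inequality to \ref{BG3} --- being both the crux and the place where one exploits the freedom to take $\Gamma.H$ arbitrarily large. First I would set $\Gamma\coloneqq cH^2$ with a constant $c>0$ to be fixed large, so that $\Gamma.H=cH^3\ge 0$. Since $\ch_0$ is twist-invariant and $\ch_1^{bH}(E).H^2\ge 0$ for every $E\in\Coh^b(X)$, the $\Gamma$-dependent part of $Q^\Gamma_{b,w}(E)$ equals $3c\big((2w-b^2)(\ch_0(E)H^3)^2-2H^3\ch_0(E)\,\ch_2^{bH}(E).H+2(\ch_1^{bH}(E).H^2)^2\big)$, which the classical Bogomolov--Gieseker inequality $\Delta_H(E)\ge 0$ forces to be non-negative on every $\nu_{b,w}$-semistable object; in fact $Q^\Gamma_{b,w}(E)\ge Q^{0}_{b,w}(E)+3c\big((2w-b^2)(\ch_0(E)H^3)^2+(\ch_1^{bH}(E).H^2)^2\big)$, so enlarging $c$ can only help.

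By the wall-and-chamber structure of Lemma \ref{locally finite set of walls} and arguments in the style of \cite{bayer:the-space-of-stability-conditions-on-abelian-threefolds,chunyi:fano-3fold}, it then suffices to check $Q^\Gamma_{b,w}(E)\ge 0$ for $\nu_{b,w}$-tilt-stable $E$ with $\nu_{b,w}(E)=0$ and $\ch_1^{bH}(E).H^2>0$, which --- reducing to integral $b$ and twisting --- one may take with $b=0$. Such an object is controlled by the slope Harder--Narasimhan factors of $\mathcal{H}^{0}(E)$ and $\mathcal{H}^{-1}(E)$, which are $H$-semistable sheaves; the factors whose slope lies in $(0,\epsilon]$ are precisely those to which \ref{BG3} applies, bounding $\ch_2.H$ from above, while the remaining factors are absorbed into the positive $3c(\cdots)$ term once $c$ is large in terms of $\epsilon$. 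Converting this into an upper bound for $\ch_3^{bH}(E)$ via Hirzebruch--Riemann--Roch on the Calabi--Yau threefold (where $\td_1(X)=\td_3(X)=0$, so $\chi=\ch_3+\tfrac{1}{12}c_2(X).\ch_1$) together with Lemma \ref{lem:vanish-hom-bn} then gives $Q^\Gamma_{b,w}(E)\ge 0$, which is Conjecture \ref{conj:bmt} granted \ref{BG3}.

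To establish \ref{BG3} itself, I would restrict an $H$-stable sheaf $E$ with $\mu_H(E)\in(0,\epsilon]$ to a general $S\in|H|$ and invoke the effective restriction theorem of \cite{feyz:effective-restriction-theorem}: because $\mu_H(E)$ lies in the tiny interval $(0,\epsilon]$, the $H_S$-slopes of the Harder--Narasimhan factors of $E|_S$ are squeezed into a small interval about $0$, so it is enough to prove the surface statement \hyperref[BGn]{\ensuremath{\mathbf{BG_2(\delta)}}} --- namely $\ch_2(F)<-\tfrac12\ch_1(F).H_S$ for every $H_S$-semistable sheaf $F$ on $S$ with $\mu_{H_S}(F)\in(0,\delta]$ --- for a suitable $\delta=\delta(\epsilon)>0$; additivity of intersection numbers over Harder--Narasimhan filtrations and over the sequence relating $E$ and $E|_S$ then yields \ref{BG3}. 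Finally I would descend once more to a smooth curve $C\in|H_S|$: by Hirzebruch--Riemann--Roch on $S$ (with $K_S=H_S$ by adjunction) and the surface analogue of the tilt-stability vanishing in Lemma \ref{lem:vanish-hom-bn}, the quantity controlled by \hyperref[BGn]{\ensuremath{\mathbf{BG_2(\delta)}}} is bounded by $h^0$ of the restriction to $C$ (using $h^0(F)\le h^0(F|_C)$ since $F(-H_S)$ has negative slope), and a semistable sheaf of slope near $0$ on $S$ restricts, after the appropriate twist, to a stable sheaf on $C$ of slope near $g(C)-1$. In this way $\ch_2$ on $S$ is exchanged for a section count on $C$, and \hyperref[BGn]{\ensuremath{\mathbf{BG_2(\delta)}}} follows as soon as $\bn_C<\chi(\cO_X(H))$, the latter equalling $\chi(\cO_S)$ by a direct computation on the Calabi--Yau threefold.

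Once $Q^\Gamma_{b,w}\ge 0$ holds on all $\nu_{b,w}$-semistable objects, the existence of a family of Bridgeland stability conditions is the double-tilt construction of \cite{bayer:the-space-of-stability-conditions-on-abelian-threefolds,macri:stability-fano-3fold}, and these are geometric because each skyscraper $\cO_x$ is $\nu_{b,w}$-stable, hence $\sigma$-stable of the common phase. The main obstacle, I expect, is Step~1: reducing the $\Gamma$-inequality for \emph{all} tilt-semistable objects --- which are complexes, not sheaves --- to \ref{BG3} for $H$-stable sheaves of small slope. This demands (i) the correct reduction to extremal tilt-stable objects with $\nu_{b,w}=0$, (ii) a careful Harder--Narasimhan and cohomology analysis expressing such objects through $H$-semistable sheaves, and (iii) fixing the single constant $c$ --- equivalently $\Gamma.H$ --- uniformly so that the positive $c$-terms dominate exactly on the regime complementary to where \ref{BG3} gives information, all while keeping the quantitative relationship between $\epsilon$, $\delta$, and the size of the neighbourhood of $g(C)-1$ consistent throughout the descent.
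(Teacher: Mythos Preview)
Your three–step descent matches the paper's architecture, and your Steps~II and~III are essentially the paper's Proposition~\ref{prop:surface-to-3fold} and Proposition~\ref{prop:curve-to-surface-smooth}. The genuine gap is in Step~I, precisely at the point you yourself flag as the main obstacle.

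The difficulty is this: Lemma~\ref{lem:vanish-hom-bn} and Hirzebruch--Riemann--Roch give, for a BN-stable $E$ with $\nu_{BN}(E)>0$,
\[
\ch_3(E)\ \le\ \hom(\cO_X,E)\ -\ \td_2(X).\ch_1(E),
\]
but you never explain how \ref{BG3} bounds $\hom(\cO_X,E)$. Your proposal instead tries to decompose $E$ into HN factors of $\cH^{0}(E)$ and $\cH^{-1}(E)$ and apply \ref{BG3} to the factors of small slope ``bounding $\ch_2.H$ from above''. This does not close: the $Q$-form depends on $\ch_3(E)$, which is not additive over such a decomposition in any useful way, and no amount of positivity in the $3c\big((2w-b^2)(\ch_0 H^3)^2+(\ch_1^{bH}.H^2)^2\big)$ term can dominate a $\ch_3$ that is, a~priori, unbounded in terms of the lower Chern numbers. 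Enlarging $c$ helps for a fixed $E$, not uniformly.

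The paper's mechanism is different and is the missing idea. First, \ref{BG3} is upgraded (Lemma~\ref{lem:st-to-tilt-st}) to a statement about \emph{tilt}-semistable objects: any $\nu_{0,w_0}$-semistable $E$ with $|\mu_H(E)|\in(0,\epsilon]$ satisfies $\ch_2(E).H/\ch_0(E)H^3<-\tfrac12|\mu_H(E)|$, which in particular forces $\mu_H(E)\notin[-\epsilon,0)$ for such objects. One then forms the \emph{universal extension}
\[
0\to E\to \widetilde{E}\to \Hom(\cO_X,E)\otimes\cO_X[1]\to 0,
\]
which is again BN-semistable, and applies the tilt-extended \ref{BG3} to both $E$ and $\widetilde{E}$. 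This yields $\ch_0(E)\le \ch_1(E).H^2/(H^3\epsilon)$ and, crucially,
\[
\hom(\cO_X,E)\ =\ \ch_0(E)-\ch_0(\widetilde{E})\ \le\ \ch_0(E)+\frac{\ch_1(E).H^2}{H^3\epsilon},
\]
so $\hom$ is linearly bounded by $\ch_1(E).H^2$. Feeding this into the HRR bound above gives a linear bound on $\ch_3(E)$ in terms of $\ch_0,\ch_1$, and \emph{now} the classical BG inequality plus the positive $\Gamma$-term suffice. A secondary point: the paper takes $\Gamma=\gamma H^2-\td_2(X)$ rather than $cH^2$, so that $\Gamma.\ch_1(E)$ exactly cancels the $\td_2(X).\ch_1(E)$ coming from HRR; with your choice one must separately control $\td_2(X).\ch_1(E)$ for $\ch_1(E)$ not proportional to $H$.
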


Analogous to \cite[Theorem 8.2]{bayer:the-space-of-stability-conditions-on-abelian-threefolds}, Conjecture \ref{conj:bmt} enables us to construct an open subset in the space of geometric Bridgeland stability conditions on $\mathrm{D^b}(X)$ via the double-tilting construction. We refer to \cite[Section 8]{bayer:the-space-of-stability-conditions-on-abelian-threefolds} and \cite[Section 7]{koseki:double-triple-solids} for a detailed treatment.

Note that by rearranging, we find that $Q^{\Gamma}_{b,w}(E)$ is actually linear in $(b,w)$:
\begin{align}\label{Q-simple}
    \frac{1}{2}Q^{\Gamma}_{b,w}(E) \=  & w\left(C_1^2-2C_0C_2 +3 \frac{\Gamma.H}{H^3} C_0^2\right)+b \big(3C_0C_3-C_1C_2 -3C_0\Gamma.\ch_1(E)\big)\\
    & +2C_2^2-3C_1C_3 -3 \frac{\Gamma.H}{H^3}C_0C_2 +3C_1\Gamma.\ch_1(E)  , \nonumber
\end{align}
where $C_i\coloneqq \ch_i(E).H^{3-i}$.

\section{On Bayer--Macr\`i--Toda conjecture}\label{sec:bmt}

In this section, we prove our main theorem, which reduces Conjecture \ref{conj:bmt} to verifying a Brill--Noether-type inequality for curves in terms of the invariant $\bn_C$, which is defined in Definition \ref{def:bnc}.

Before stating the theorem, we propose a potential improvement of the classical BG inequality, which plays a key role in our argument. For a polarised projective lci scheme $(X,H)$ of pure dimension $n\ge 2$ and a real number $\epsilon>0$, we consider:

\begin{minipage}{14.2cm}\vspace{2mm}
\begin{enumerate}[label=$\mathbf{BG_n(\epsilon)}$]
\item\label{BGn} \emph{~For any $H$-stable sheaf $E$ on $X$ with $\mu_H(E)\in (0,\epsilon]$, we have
\begin{equation}\label{eq:stBG}
    \ch_2(E).H^{n-2}<-\frac{1}{2}\ch_1(E).H^{n-1}.
\end{equation}
}
\end{enumerate}
\end{minipage}

\begin{Thm}\label{thm:main-criterion}
Let $(X, H)$ be a polarised smooth projective threefold with $K_X$ numerically trivial and $H^1(\cO_X)=0$. Given a surface $S\in |H|$ and a curve $C\in |H_S|$. Assume either

\begin{enumerate}
    \item $S$ and $C$ are both smooth with
    \begin{equation}\label{eq:thm-bn-smooth}
        \bn_C< \chi(\cO_X(H)),
    \end{equation}
    or

    \item $S$ has rational singularities and $C$ is integral with
    \begin{equation*}
        \bn_C< \chi(\cO_X(H))-1.
    \end{equation*}
\end{enumerate}

Then \emph{\ref{conj:bmt}} holds for $(X,H)$ and some $1$-cycle $\Gamma\in \CH_1(X)_{\R}$ with $\Gamma.H\geq 0$.
\end{Thm}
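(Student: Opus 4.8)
The plan is to reduce the $\Gamma$-generalized BMT inequality \ref{conj:bmt} down through dimensions: first from the threefold $X$ to a surface $S\in|H|$, then from $S$ to a curve $C\in|H_S|$, at which point the Brill--Noether number $\bn_C$ enters. By Theorem \ref{Thm-BG-BMT}, it suffices to establish \ref{BG3}, i.e.~\hyperref[BGn]{$\mathbf{BG_3(\epsilon)}$}, for some $\epsilon>0$: the sought cycle $\Gamma$ is then produced automatically. So the whole burden is to prove that for every $H$-stable sheaf $E$ on $X$ with $0<\ch_1(E).H^2\le\epsilon\,\ch_0(E)H^3$ one has $\ch_2(E).H<-\tfrac12\ch_1(E).H^2$, for $\epsilon$ chosen small depending only on $(X,H)$.

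\textbf{Step 1: reduce from $X$ to $S$.} First I would restrict $E$ to a surface $S\in|H|$. Because $\mu_H(E)$ lies in the tiny interval $(0,\epsilon]$, the restriction $E|_S$ need not be $H_S$-stable, but the slopes of its Harder--Narasimhan factors are pinned near $0$: quantitatively, using the effective restriction results of \cite{feyz:effective-restriction-theorem} one controls $\mu^{+}_{H_S}(E|_S)$ and $\mu^{-}_{H_S}(E|_S)$ in terms of $\epsilon$ and the discriminant bound coming from the classical Bogomolov--Gieseker inequality \eqref{eq-delta}. This is the content of Proposition \ref{prop:surface-to-3fold} (``\ref{BGn} for $S$ $\Rightarrow$ \ref{BG3}''): one shows that if \hyperref[BGn]{$\mathbf{BG_2(\delta)}$} holds on $S$ for a suitable $\delta=\delta(\epsilon)>0$, then \ref{BG3} holds on $X$. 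The point of working in a small slope window — rather than restricting to a divisor in $|2H|$ as in \cite{chunyi:stability-condition-quintic-threefold,koseki:double-triple-solids,liu:bg-ineqaulity-quadratic} — is precisely that it lets us land on $S\in|H|$ itself, which is exactly the surface whose hyperplane section is $C\in|H_S|$. In case (b) of the theorem one uses the singular Bogomolov--Gieseker inequality for normal surfaces with rational Gorenstein singularities (recalled after \eqref{eq-delta}) together with the Chern-character formalism of Section \ref{sec:pre}, which is why the hypothesis is $\bn_C<\chi(\cO_X(H))-1$: the extra $-1$ absorbs a correction term for the singular $S$.

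\textbf{Step 2: reduce from $S$ to $C$, bringing in $\bn_C$.} Now I would prove \hyperref[BGn]{$\mathbf{BG_2(\delta)}$} on $S$ by restricting once more to the curve $C\in|H_S|$, which has arithmetic genus $g$ with $2g-2=H^3+K_X.H^2=H^3$ (using $K_X\equiv 0$). Under this reduction $\ch_2(-).H^{0}$ is traded for a count of global sections and a stable sheaf on $S$ of slope near $0$ restricts to a stable sheaf on $C$ of slope near $g-1$; the failure of the desired inequality would force the existence of a stable sheaf $F$ on $C$ with $\deg(F)/\rk(F)$ in an arbitrarily small neighbourhood of $g-1$ and $h^0(F)/\rk(F)$ too large — more precisely at least $\chi(\cO_X(H))$ (resp.~$\chi(\cO_X(H))-1$ in case (b)). But $\bn_C$ is by Definition \ref{def:bnc} exactly the limiting supremum of $h^0(F)/\rk(F)$ over such $F$, so the hypotheses \eqref{eq:thm-bn-smooth} (resp.~its singular analogue) are precisely what rules this out. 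Turning this qualitative picture into an honest implication requires the tilt-stability machinery on $S$ from Section \ref{subsec:tilt}: one pushes the stable sheaf to a BN-(semi)stable object in $\Coh^0(S)$, uses Lemma \ref{lem:vanish-hom-bn} (or its surface analogue) to bound $\chi$ by $\hom(\cO_S,-)$, then crosses the wall $\Pi^{-1}$ of the structure sheaf to cut down to the curve $C$ and identify the resulting sheaf as a stable sheaf on $C$ of the relevant slope; finiteness of walls (Lemma \ref{locally finite set of walls}) keeps everything effective.

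\textbf{Main obstacle.} The delicate point — and the step I expect to be hardest — is Step 1: making the restriction from $X$ to a surface in $|H|$ genuinely quantitative, i.e.~producing an explicit $\epsilon>0$ (depending only on $H^3$, the discriminant bounds, and the Hodge-index constants of $X$) for which \hyperref[BGn]{$\mathbf{BG_2(\delta)}$} on $S$ implies \ref{BG3} on $X$, while keeping the induced $\delta$ positive. One has to chase the HN factors of $E|_S$, bound their ranks and discriminants using \eqref{eq-delta}, and check that the ``error'' introduced by passing to $S$ stays within the slack allowed by the strict inequality in \eqref{eq:stBG}; the singular case (b) compounds this because one must also verify that the Chern-character and Bogomolov--Gieseker formalism of Section \ref{sec:pre} behaves well under restriction to the possibly-singular $C\subset S$. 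Once Step 1 is in place, Step 2 is essentially the definition of $\bn_C$ unwound through one wall-crossing, and the conclusion follows by invoking Theorem \ref{Thm-BG-BMT}.
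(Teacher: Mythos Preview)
Your three-step reduction matches the paper exactly at the structural level: \ref{conj:bmt} $\Leftarrow$ \ref{BG3} (Theorem~\ref{thm:ch2}), then \ref{BG3} on $X$ $\Leftarrow$ \hyperref[BGn]{$\mathbf{BG_2(\delta)}$} on $S$ (Proposition~\ref{prop:surface-to-3fold}), then the curve reduction. Step~1 is correctly described.

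Your account of Step~2 is off, however. The paper does \emph{not} wall-cross on $S$ to ``cut down to $C$''; it uses Riemann--Roch on $S$ followed by naive restriction of sheaves to $C$. In the smooth case~(a) (Proposition~\ref{prop:curve-to-surface-smooth}) the key move---which you have not identified---is to bound $\chi(F,F)$ rather than $\chi(F)$: one has
\[
-\Delta(F)+\chi(\cO_S)\ch_0(F)^2=\chi(F,F)\le 2+h^0\bigl(F\otimes F^\vee(H)\big|_C\bigr),
\]
and every HN factor of $F\otimes F^\vee(H)|_C$ has slope near $g-1$, so $\bn_C$ controls the right-hand side with threshold $\chi(\cO_S)=\chi(\cO_X(H))$. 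In the singular case~(b) (Proposition~\ref{prop:curve-to-surface}) one cannot form $F\otimes F^\vee$ (reflexive sheaves on singular $S$ need not be locally free, so the tensor square may acquire torsion and $\chi(F,F)$ is ill-behaved), so one instead bounds $\chi(F)\le h^0(F|_C)+h^0(F^\vee(H)|_C)$, using Lemma~\ref{lem:weak-bound} for the first term and $\bn_C$ for the second. The extra $-1$ in~(b) is the cost of this weaker linear-in-$\ch_0(F)$ argument versus the quadratic one in~(a), \emph{not} a correction from the singular Bogomolov--Gieseker inequality as you suggest. Two minor points: $g-1=H^3$ (your formula $2g-2=H^3$ is off by a factor of $2$); and the step you flag as hardest (Step~1) is in fact the more routine one---the substance lies in the $\chi(F,F)$ trick of Step~2.
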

\begin{proof}
By \cite[Theorem 3.2]{chunyi:stability-condition-quintic-threefold} or \cite[Theorem 2.3]{koseki:double-triple-solids}, it suffices to show that there exists a $1$-cycle $\Gamma\in \CH_1(X)_{\mathbb{R}}$ with $\Gamma. H\ge 0$ such that
\begin{equation}\label{Q}
Q^{\Gamma}_{0,0}(E)\ge 0    
\end{equation}
for every BN-stable object $E\in \Coh^{0}(X)$ with $\nu_{BN}(E)\in [0,\tfrac12]$. Then the proof is divided into the following three steps:

\medskip

\noindent \textbf{Step I.}
By Theorem \ref{thm:ch2}, to verify \eqref{Q}, it suffices to verify \ref{BG3} for $(X,H)$ for some $\epsilon>0$.

\medskip

\noindent \textbf{Step II.}
By Proposition \ref{prop:surface-to-3fold-1}, \ref{BG3} for $(X,H)$ reduces to
\hyperref[BGn]{\ensuremath{\mathbf{BG_2(\delta)}}} for $(S,H_S)$, where $S\in|H|$ and $\epsilon=\frac{\delta}{2+3\delta}$.

\medskip

\noindent \textbf{Step III.}
Finally, after noting $\chi(\cO_S)=\chi(\cO_X(H))$, Propositions \ref{prop:curve-to-surface-smooth} and \ref{prop:curve-to-surface} reduce
\hyperref[BGn]{\ensuremath{\mathbf{BG_2(\delta)}}} for $(S,H_S)$ to the required bounds in (a) and (b), respectively.
\end{proof}
\begin{Rem}
In Theorem \ref{thm:main-criterion}, the restriction of $\cO_X(H)$ to the curve $C$ has degree $g(C)-1$ and
\[
h^0\bigl(\cO_C(H_C)\bigr)=\chi\bigl(\cO_X(H)\bigr)-2,
\]
so we obtain
\begin{equation*}
\chi\bigl(\cO_X(H)\bigr)-2 \le \bn_C.
\end{equation*}
Although in some situations $\mathcal{O}_C(H_C)$ computes $\bn_C$ (see Remark \ref{rem-equality}), this is not always the case; cf.\ Theorem \ref{thm:cicy-in-fano} and Example~\ref{ex:pathology}.
\end{Rem}

\subsection{Step I: from $\ch_2$ to $\ch_3$}

The first step is to reduce the inequality for $\ch_3$ to certain inequalities for $\ch_2$. 
We begin by observing that \ref{BGn} automatically extends to a class of tilt-semistable objects.

\begin{Lem}\label{lem:st-to-tilt-st}
Let $(X, H)$ be a polarised smooth projective variety of dimension $n \geq 2$. If \ref{BGn} holds for $(X,H)$, then for any $w_0>0$ and $\nu_{0,w_0}$-semistable object $E\in \Coh^0(X)$ with $|\mu_H(E)| \in (0,\epsilon]$, we have
\begin{equation}\label{eq:stBG-tilt}
    \frac{\ch_2(E).H^{n-2}}{\ch_0(E)H^n} < -\frac{1}{2} \left|\mu_H(E)\right|.
\end{equation}
\end{Lem}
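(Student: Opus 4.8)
The plan is to bootstrap \ref{BGn}, which concerns slope-stable \emph{sheaves}, up to $\nu_{0,w_0}$-semistable complexes of slope near zero, by reducing everything to the large-volume limit.

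\emph{Step 1 (the sheaf case, both signs).} First I would record that \eqref{eq:stBG} already yields \eqref{eq:stBG-tilt} whenever $E$ is, up to the shift $[1]$, a slope-semistable torsion-free sheaf. Indeed, additivity of $\ch_1(-).H^{n-1}$ and $\ch_2(-).H^{n-2}$ along a Jordan--Hölder filtration extends \ref{BGn} from $H$-stable to $H$-semistable torsion-free sheaves with slope in $(0,\epsilon]$; applying this to the reflexive hull of $R\hom(G,\cO_X)$ — which is again reflexive and $H$-semistable, of the opposite slope, with $\ch_2(-).H^{n-2}$ not smaller than that of $G$ — gives $\ch_2(G).H^{n-2}/(\ch_0(G)H^{n})<-\tfrac12|\mu_H(G)|$ for every $H$-semistable torsion-free sheaf $G$ with $0<|\mu_H(G)|\le\epsilon$. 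Since $\ch_i(G[1])=-\ch_i(G)$, this settles \eqref{eq:stBG-tilt} both for $E=G$ and for $E=G[1]$.

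\emph{Step 2 (reduction of a general $\nu_{0,w_0}$-semistable object).} Here I would argue by induction on the positive integer $\ch_1(E).H^{n-1}$. If $E$ is strictly $\nu_{0,w_0}$-semistable, Lemma \ref{lem:JH-factor} (with $b=0$, using its ``resp.''\ clause when $\mu_H(E)<0$) supplies a Jordan--Hölder factor whose slope has absolute value in $(0,|\mu_H(E)|]$ and whose $\ch_1(-).H^{n-1}$ is strictly smaller, while the complementary sub/quotient is again $\nu_{0,w_0}$-semistable of the same $\nu_{0,w_0}$-slope; peeling off such factors repeatedly and using additivity of $\ch_0$ and $\ch_2(-).H^{n-2}$, it suffices to treat $E$ that is $\nu_{0,w_0}$-\emph{stable}. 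For such $E$ I would follow the wall-and-chamber structure of Lemma \ref{locally finite set of walls} for the class of $E$ as $w$ increases: either $E$ stays $\nu_{0,w}$-stable for all $w\gg0$, in which case the large-volume description of tilt-stable objects identifies $E$, up to the shift $[1]$, with a torsion-free $\mu_H$-semistable sheaf of slope $\mu_H(E)$ (so $0<|\mu_H(E)|\le\epsilon$ and Step 1 applies), or $E$ acquires a strictly destabilising subobject at some wall $w_1>w_0$, where $E$ is strictly $\nu_{0,w_1}$-semistable, and we are back in the strictly-semistable situation at the parameter $w_1$; the inductive hypothesis still applies since the assertion is uniform in the stability parameter $w>0$.

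\emph{Main obstacle.} The genuinely non-formal point — everything above being bookkeeping with additivity of Chern characters — is to control the slopes of the pieces produced by the induction (the Jordan--Hölder factors, or the slope-HN factors of $\cH^{-1}(E)$ and $\cH^0(E)$ in the destabilising step): one must show they stay in $[-\epsilon,0)\cup(0,\epsilon]$, where \ref{BGn} is available, and then reassemble the resulting sheaf inequalities — via the identity $\ch_2(E).H^{n-2}=\ch_2(\cH^0(E)).H^{n-2}-\ch_2(\cH^{-1}(E)).H^{n-2}$ together with the signs of the relevant $\ch_0$'s — into \eqref{eq:stBG-tilt} for $E$. This is precisely where the smallness of $\mu_H(E)$ enters: via the Bogomolov inequality $\Delta_H(-)\ge 0$ on each tilt-semistable piece, a slope near $0$ forces the relevant slopes and Brill--Noether slopes near $0$, which is what keeps the induction from escaping the regime governed by \ref{BGn}.
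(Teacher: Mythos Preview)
Your Step~1 is fine (and in fact more careful than the paper, which just writes ``by \ref{BGn}'' for the large-volume case without spelling out the negative-slope dualisation). The genuine problem is in Step~2, and it is exactly the point you flag as the ``main obstacle'' but do not resolve. The reduction ``peel off a JH factor and use additivity, so we may assume $E$ is $\nu_{0,w_0}$-stable'' does not go through: Lemma~\ref{lem:JH-factor} guarantees that \emph{one} Jordan--H\"older factor $F$ has $|\mu_H(F)|\in(0,|\mu_H(E)|]$, but the complementary piece can have $\ch_0=0$ or $\ch_0$ of the opposite sign, so neither the induction hypothesis nor \eqref{eq:stBG-tilt} applies to it, and there is nothing to ``add''. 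Moreover \eqref{eq:stBG-tilt} is not a linear inequality in $(\ch_0,\ch_1.H^{n-1},\ch_2.H^{n-2})$ --- its direction flips with the sign of $\ch_0$ --- so even if every factor satisfied it, summing would not recover it for $E$. Your suggested remedy (``Bogomolov on each tilt-semistable piece forces the slopes near~$0$'') is false: a JH factor with $\ch_0=0$ has $\Delta_H=(\ch_1.H^{n-1})^2\ge 0$ trivially and no constraint on $\mu_H$.

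The paper avoids this entirely by arguing by contradiction rather than by direct decomposition. One still inducts on $\ch_1(E).H^{n-1}$, but now \emph{assumes $E$ violates \eqref{eq:stBG-tilt}} and seeks a smaller violator. If $E$ is not $\nu_{0,w}$-semistable for all $w\gg 0$, it is strictly $\nu_{0,w'}$-semistable for some $w'>0$; Lemma~\ref{lem:JH-factor} then produces a JH factor $F$ with $\mu_H(F)$ between $0$ and $\mu_H(E)$, and --- this is the missing geometric observation --- $\Pi(F)$ lies on the line segment joining $(0,w')$ to $\Pi(E)$. Since $E$ violates \eqref{eq:stBG-tilt}, the endpoint $\Pi(E)$ sits on or above the graph $w=-|b|/2$, as does $(0,w')$; the segment stays on one side of $b=0$, hence on or above $w=-|b|/2$, so $F$ violates \eqref{eq:stBG-tilt} too, contradicting the induction hypothesis. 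This needs control of only \emph{one} factor, not all of them, which is why it succeeds where your additivity argument stalls.
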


\begin{proof}
If $E$ is $\nu_{0,w}$-semistable for any $w\gg 0$, then by \cite[Lemma 2.7]{bayer:the-space-of-stability-conditions-on-abelian-threefolds}, either $E$ is an $H$-semistable sheaf, or $\cH^{-1}(E)$ is an $H$-semistable sheaf with $\dim \mathrm{Supp}(\cH^0(E))\leq n-2$. In both cases, the statement holds for $E$ by \ref{BGn}.

Now, we do induction on $\ch_1(E).H^{n-1}\in \Z_{> 0}$. If $\ch_1(E).H^{n-1}$ is minimal, then $E$ is $\nu_{0,w}$-semistable for any $w>0$ and the result follows from the above paragraph. Next, assume that $\ch_1(E).H^{n-1}$ is not minimal, and the result holds for any $w_0>0$ and $\nu_{0,w_0}$-semistable object $F\in \Coh^0(X)$ with $|\mu_H(F)| \in (0, \epsilon]$ and $\ch_1(F).H^{n-1}<\ch_1(E).H^{n-1}$. If $E$ violates \eqref{eq:stBG-tilt}, then it is not $\nu_{0,w}$-semistable for $w\gg 0$. In this case, $E$ is strictly $\nu_{0,w'}$-semistable for some $w'>0$. Let $\ell$ be the line segment connecting $(0,w')$ and $\Pi(E)$. By Lemma \ref{lem:JH-factor}, there exists a Jordan--H\"older factor $F$ of $E$ with respect to $\nu_{0,w'}$ such that $\Pi(F)$ lies on $\ell\setminus \{(0, w')\}$. Since $E$ violates \eqref{eq:stBG-tilt}, the line segment $\ell$ lies on or above the curve $w=-\frac{1}{2}|b|$. Hence, $F$ also violates \eqref{eq:stBG-tilt}, contradicting the induction hypothesis.
\end{proof}

Now, we can reduce \ref{conj:bmt} to \ref{BG3}.

\begin{Thm} \label{thm:ch2}
Let $(X, H)$ be a polarised smooth projective threefold with $K_X$ numerically trivial. Suppose \ref{BG3} holds for $(X,H)$ and some $\epsilon>0$. Then there exists a $1$-cycle $\Gamma (\epsilon)\in \CH_1(X)_{\R}$ such that 
$\Gamma(\epsilon).H \geq 0$ and 
$$Q^{\Gamma(\epsilon)}_{0,0}(E) \geq 0$$ for any BN-stable object $E \in \Coh^0(X)$ with $\nu_{\text{BN}}(E) \in [0, \frac{1}{2}]$. 
Explicitly, we can take 
\[\Gamma(\epsilon)\coloneqq\gamma H^2-\td_{2}(X), \quad 
\gamma \geq \max\left\{\frac{4}{H^3\epsilon}, \frac{\td_2(X).H}{H^3} \right\}. \]
In particular, \emph{\hyperref[conj:bmt]{\ensuremath{\Gamma(\epsilon)}-\textbf{BMT}}
} holds for $(X,H)$.
\end{Thm}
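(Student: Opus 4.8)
The plan is to verify the inequality $Q^{\Gamma(\epsilon)}_{0,0}(E)\ge 0$ directly for a BN-stable object $E\in\Coh^0(X)$ with $\nu_{\mathrm{BN}}(E)\in[0,\tfrac12]$, by splitting into cases according to the size of the slope $\mu\coloneqq \ch_1(E).H^2/(\ch_0(E)H^3)$. First, since $K_X$ is numerically trivial, we have $\td_1(X)=0$ and $\td_3(X)=0$, so the Grothendieck--Riemann--Roch formula simplifies to $\chi(E)=\ch_3(E)+\td_2(X).\ch_1(E)$, which will let me rewrite the $\ch_3$-term in $Q^{\Gamma}_{0,0}$. At $(b,w)=(0,0)$ one has $\ch_k^{bH}=\ch_k$, and the quadratic form becomes
\[
Q^{\Gamma}_{0,0}(E)= 4(\ch_2(E).H)^2 - 6(\ch_1(E).H^2)\big(\ch_3(E)-\Gamma.\ch_1(E)\big).
\]
With $\Gamma(\epsilon)=\gamma H^2-\td_2(X)$, the term $\Gamma.\ch_1(E)=\gamma (H^2.\ch_1(E)) - \td_2(X).\ch_1(E)$, and combining with GRR gives $\ch_3(E)-\Gamma(\epsilon).\ch_1(E)=\chi(E)-\gamma(\ch_1(E).H^2)$. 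So
\[
Q^{\Gamma(\epsilon)}_{0,0}(E)= 4(\ch_2(E).H)^2 - 6(\ch_1(E).H^2)\chi(E) + 6\gamma (\ch_1(E).H^2)^2.
\]

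The main work is to bound $\chi(E)$ from above. Here I would invoke Lemma \ref{lem:vanish-hom-bn}: since $K_X$ is numerically trivial we have $K_X.H^2=K_X^2.H=0$, so for a BN-stable $E$ with $\nu_{\mathrm{BN}}(E)>0$ we get $\chi(E)\le \hom(\cO_X,E)$. Using BN-(semi)stability and the Positivity/wall-structure results (Lemma \ref{locally finite set of walls}), the evaluation cone $\mathrm{Cone}(\cO_X\otimes\Hom(\cO_X,E)\to E)$ is again BN-semistable, hence its projection lies outside $U$, which yields $\Delta_H$ of the cone $\ge 0$; unwinding this gives an upper bound on $\hom(\cO_X,E)$ linear in $\ch_1(E).H^2$ and controlled by $\Delta_H(E)$. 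Plugging back in, $6\gamma(\ch_1(E).H^2)^2$ is a positive term quadratic in $\ch_1(E).H^2$, and it dominates $6(\ch_1(E).H^2)\chi(E)$ provided $\gamma$ is large enough relative to $1/(\ch_1(E).H^2)$ — this is precisely where the hypothesis $\gamma\ge 4/(H^3\epsilon)$ enters, because for slopes $\mu>\epsilon/\text{(something)}$, i.e.\ $\ch_1(E).H^2$ large compared to $\ch_0(E)H^3$, the bound is automatic, while for small slopes we must use \ref{BG3}. The case $\nu_{\mathrm{BN}}(E)=0$, i.e.\ $\ch_2(E).H=0$, must be handled separately: then $Q^{\Gamma(\epsilon)}_{0,0}(E)=-6(\ch_1(E).H^2)\chi(E)+6\gamma(\ch_1(E).H^2)^2$ and one needs $\chi(E)\le \gamma(\ch_1(E).H^2)$, again from Lemma \ref{lem:vanish-hom-bn} plus the section bound; the condition $\gamma\ge \td_2(X).H/H^3$ ensures $\Gamma(\epsilon).H=\gamma H^3-\td_2(X).H\ge 0$.

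The key technical step — and the main obstacle — is the case of \emph{small} $\nu_{\mathrm{BN}}(E)$, equivalently small slope $\mu$. Here BN-stability of $E$ with slope near $0$ forces, via the wall-crossing dictionary of Lemma \ref{locally finite set of walls} and Lemma \ref{lem:JH-factor}, that $E$ is closely related to an $H$-(semi)stable sheaf of small slope, and one applies Lemma \ref{lem:st-to-tilt-st}: \ref{BG3} upgrades to the tilt-stable statement $\ch_2(E).H/(\ch_0(E)H^3)<-\tfrac12|\mu_H(E)|$ for $\nu_{0,w}$-semistable $E$ with $|\mu_H(E)|\in(0,\epsilon]$. This negativity of $\ch_2(E).H$ is what makes $4(\ch_2(E).H)^2$ combine favorably; more precisely it controls the sign and size of $\chi(E)$ through $\ch_2(E).H$ appearing in GRR-type estimates, so that the remaining terms are absorbed by $6\gamma(\ch_1(E).H^2)^2$ once $\gamma\ge 4/(H^3\epsilon)$. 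I expect the delicate bookkeeping to be in reconciling the two slope regimes at the threshold $\mu\sim\epsilon$ and in extracting the explicit constant $4/(H^3\epsilon)$; the rest is GRR simplification plus the positivity of $\Delta_H$ for BN-semistable objects. Finally, the last sentence of the theorem follows since the hypotheses of \cite[Theorem 3.2]{chunyi:stability-condition-quintic-threefold} (or \cite[Theorem 2.3]{koseki:double-triple-solids}) are exactly that $Q^{\Gamma}_{0,0}\ge0$ on BN-stable objects with $\nu_{\mathrm{BN}}\in[0,\tfrac12]$ for some $\Gamma$ with $\Gamma.H\ge0$, which is what we have established.
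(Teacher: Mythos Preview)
Your overall shape is close to the paper's, but there are two concrete errors that break the argument. First, your expression for $Q^{\Gamma}_{0,0}(E)$ drops the cross term: at $(b,w)=(0,0)$ the definition gives
\[
Q^{\Gamma}_{0,0}(E)=4(\ch_2(E).H)^2-6(\Gamma.H)\,\ch_0(E)(\ch_2(E).H)-6(\ch_1(E).H^2)\big(\ch_3(E)-\Gamma.\ch_1(E)\big),
\]
and the middle term cannot be ignored (in the paper it is absorbed using the classical BG inequality $(\ch_1.H^2)^2\ge 2H^3\,\ch_0\,(\ch_2.H)$). Second, your proposed bound on $\hom(\cO_X,E)$ via $\Delta_H$ of the evaluation cone goes the wrong way: with $h=\hom(\cO_X,E)$ one has $\Delta_H(\widetilde{E})=\Delta_H(E)+2hH^3(\ch_2(E).H)$, and since $\nu_{BN}(E)\ge 0$ forces $\ch_2(E).H\ge 0$, the inequality $\Delta_H(\widetilde{E})\ge 0$ gives no \emph{upper} bound on $h$.

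The actual mechanism by which \ref{BG3} enters is different from what you sketch. Because $\widetilde{E}$ is BN-semistable with $\nu_{BN}(\widetilde{E})=\nu_{BN}(E)\in(0,\tfrac12]$, Lemma~\ref{lem:st-to-tilt-st} forbids $|\mu_H(\widetilde{E})|\in(0,\epsilon]$: indeed, the lemma would force $\ch_2.H/(\ch_0 H^3)<-\tfrac12|\mu_H|$, which combined with $\ch_2.H\ge 0$ and $\nu_{BN}\le\tfrac12$ is impossible. This ``forbidden slope interval'' yields directly $\ch_0(\widetilde{E})\ge -\ch_1(E).H^2/(H^3\epsilon)$, i.e.\ $\hom(\cO_X,E)\le \ch_0(E)+\ch_1(E).H^2/(H^3\epsilon)$; the same reasoning applied to $E$ gives $\ch_0(E)\le \ch_1(E).H^2/(H^3\epsilon)$. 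These two linear bounds, fed into $\chi(E)\le\hom(\cO_X,E)$ from Lemma~\ref{lem:vanish-hom-bn} and then into the (correct) formula for $Q^{\Gamma}_{0,0}$, are what makes the constant $4/(H^3\epsilon)$ appear. Note in particular that there is no ``negativity of $\ch_2(E).H$'' for $E$ itself---you have $\ch_2(E).H\ge 0$ throughout; \ref{BG3} is used only as an exclusion principle for the slopes of $E$ and $\widetilde{E}$. For the boundary case $\nu_{BN}(E)=0$ one also needs to bound $\mathrm{ext}^2(\cO_X,E)$ (via the derived dual and a filtration of $\widetilde{E}$), which your sketch omits.
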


\begin{proof}
We consider the universal extension 
\[
0\to E \to \widetilde{E} \to \Hom(\cO_X, E) \otimes \cO_X[1] \to 0, \]
which is an exact sequence in $\Coh^0(X)$. First, assume that $\nu_{BN}(E) \neq 0$. Then the BN-stability of $E$ imposes that $\widetilde{E}$ is BN-semistable (see e.g.~\cite[Lemma 2.12]{chunyi:stability-condition-quintic-threefold}) and $\nu_{BN}(\widetilde{E})=\nu_{BN}(E)$. 
From the assumption \ref{BG3} and Lemma \ref{lem:st-to-tilt-st}, it follows that $\mu_H(E), \mu_H(\widetilde{E}) \notin [-\epsilon, \epsilon]$, which gives
\begin{equation} \label{eq:randa}
\ch_0(E)H^3 \leq \frac{1}{\epsilon}\ch_1(E).H^2
\end{equation}
and 
\begin{equation} \label{eq:bound-hom}
-\frac{\ch_1(E).H^2}{H^3 \epsilon} \leq  \ch_0(\widetilde{E}) = \ch_0(E) - \mathrm{hom}(\cO_X, E).
\end{equation}
On the other hand, the Hirzebruch--Riemann--Roch theorem and Lemma \ref{lem:vanish-hom-bn} imply 
\begin{equation} \label{eq:RR-BN}
\ch_3(E)+\td_{2}(X).\ch_1(E)=\chi(E) \leq \mathrm{hom}(\cO_X, E). 
\end{equation}
Combining the inequalities \eqref{eq:bound-hom} and \eqref{eq:RR-BN}, we get 
\begin{equation} \label{eq:ch3ineq}
\ch_3(E) \leq \ch_0(E)+\frac{1}{H^3\epsilon}\ch_1(E).H^2-\td_{2}(X).\ch_1(E). 
\end{equation}
Then by \eqref{Q-simple} via writing $C_i\coloneqq \ch_i(E).H^{3-i}$ and $\Gamma(\epsilon) = \gamma H^2 -\td_2(X)$, we get the required lower bound 
\begin{align*}
\frac{1}{2}Q^{\Gamma(\epsilon)}_{0,0}(E) = \ & 2C_2^2-3 \left(\gamma -\frac{ \td_2(X).H}{H^3} \right)C_0C_2 -3C_1C_3 +3C_1(\gamma H^2 -\td_2(X)).\ch_1(E) \\ 
\overset{\eqref{eq:ch3ineq}}{\geq} \ & 2C_2^2 -3 \left(\gamma -\frac{ \td_2(X).H}{H^3} \right)C_0C_2+3\gamma C_1^2 - 3C_1\left(\frac{C_0}{H^3} +\frac{C_1}{H^3\epsilon}\right) \\
\overset{\text{(BG)}}{\geq} \ & 2C_2^2-3 \left(\gamma -\frac{ \td_2(X).H}{H^3} \right)\frac{C_1^2}{2}+3\gamma C_1^2 - 3C_1\left(\frac{C_0}{H^3} +\frac{C_1}{H^3\epsilon}\right)\\
\overset{\eqref{eq:randa}}{\geq} \ & 2C_2^2 + \frac{3}{2}C_1^2 \left(\gamma + \frac{\td_2(X).H}{H^3}   - \frac{4}{H^3\epsilon} \right)\\
\geq \ & 0. 
\end{align*}
Here, (BG) stands for the classical BG inequality \eqref{eq-delta}, and we used
the assumption on $\gamma$ and Miyaoka's inequality $\td_{2}(X).H \geq 0$ in the last inequality. 

It remains to consider the case $\nu_{BN}(E)=0$, i.e., $\ch_2(E).H=0$. 
In this case, the inequality $Q^{\Gamma(\epsilon)}_{0,0}(E) \geq 0$ is equivalent to 
$\ch_3(E) \leq \Gamma(\epsilon).\ch_1(E)$. 
Following the proof of \cite[Proposition 3.3]{chunyi:stability-condition-quintic-threefold}, for any $0 < \delta \ll 1$, there exists a filtration of $\widetilde{E}$ such that each factor $E_i$ is $\nu_{0, \alpha_i}$-semistable for some $\alpha_i>0$, and satisfies $\nu_{BN}(E_i) < \delta$. 
Applying Lemma \ref{lem:st-to-tilt-st} to each $E_i$, we have 
$\mu_H(E_i) \notin [-\epsilon, \epsilon]$, which implies $\ch_1(E_i).H^2>-\epsilon\ch_0(E_i)H^3$.
Thus, by taking the sum over all factors, we get $\ch_1(\widetilde{E}).H^2>-\epsilon\ch_0(\widetilde{E})H^3$ and so inequality \eqref{eq:bound-hom} still holds in this case. 
Similar to the arguments in \cite[Proposition 3.3]{chunyi:stability-condition-quintic-threefold}, using the derived dual and \cite[Proposition 5.1.3(b)]{bayer:bridgeland-stability-conditions-on-threefolds}, we also have 
\[
\mathrm{ext}^2(\cO_X, E) \leq \frac{\ch_1(E).H^2}{H^3\epsilon}-\ch_0(E). 
\]
Combining these inequalities with the Hirzebruch--Riemann--Roch theorem and Lemma \ref{lem:vanish-hom-bn}, we get 
\begin{align*}
\ch_3(E)+\td_{2}(X).\ch_1(E)=\chi(E)
&\leq \mathrm{hom}(\cO_X, E)+\mathrm{ext}^2(\cO_X, E) \\
&\leq \frac{2\ch_1(E).H^2}{H^3\epsilon}. 
\end{align*}
By our choice of $\Gamma(\epsilon)$, we obtain 
$\ch_3(E) \leq \Gamma(\epsilon).\ch_1(E)$ as required. 
\end{proof}

Therefore, in the following, we only need to investigate \ref{BG3} and its variants. Before doing this, we make some comments on \ref{BG3}.

\begin{Rem}\label{rmk:basepoint-free}
Note if \ref{BGn} holds for $(X, mH)$ with $m>0$, then \hyperref[BGn]{\ensuremath{\mathbf{BG_n}(m\epsilon)}} holds for $(X, H)$. In some situations, it may be useful to rescale the polarisation $H$ to find a suitable surface $S\in |H|$; see for example, Theorem \ref{thm:hypergeo}.
\end{Rem}

\begin{Rem}\label{rmk-etale}
If $\pi\colon Y\to X$ is a finite \'etale morphism, then \ref{BGn} holds for $(X, H)$ if and only if \ref{BGn} holds for $(Y,\pi^*H)$. Indeed, this follows from the fact that slope-semistability is preserved under taking pullback and pushforward along $\pi$, together with the following calculations using Grothendieck--Riemann--Roch: 
\[\ch_i(\pi^*E).(\pi^*H)^{n-i}=\deg(\pi)\ch_i(E).H^{n-i}\]
and
\[\ch_i(\pi_*F).H^{n-i}=\ch_i(F).(\pi^*H)^{n-i}\]
hold for any $i$ and any sheaf $E$ and $F$ on $X$ and $Y$, respectively.
\end{Rem}

In practice, Remark \ref{rmk-etale} enables us to establish \ref{BG3} for certain non-simply connected Calabi--Yau threefolds (cf.~Example \ref{ex:quotient}). On the other hand, the following example indicates that we should restrict ourselves to Calabi--Yau threefolds with finite fundamental groups when considering \ref{BG3}.

\begin{Ex}
Let $X$ be a Calabi--Yau threefold with infinite fundamental group. 
We claim that \ref{BG3} fails on $(X,H)$ for any $\epsilon>0$ and any ample divisor $H$ on $X$. Indeed, using the Beauville--Bogomolov decomposition theorem \cite{beauville:trivial-c1}, such $X$ admits a finite {\'e}tale Galois covering $\pi\colon Y\to X$ such that $Y$ is an abelian threefold or $Y=S\times C$ for a K3 surface $S$ and an elliptic curve $C$. By Remark \ref{rmk-etale}, we only need to show the failure of \ref{BG3} for $Y$ and an arbitrary ample divisor $D$ on $Y$.

When $Y$ is an abelian threefold, by \cite{mukai:semi-homo}, we know that for any rational number $\mu \in \Q$, there exists a $D$-stable vector bundle $E$ on $Y$ with $\mu_{D}(E)=\mu$ and $\Delta_{D}(E)=0$. This invalidates \ref{BG3} for $(Y, D)$ and any $\epsilon>0$.

When $Y=S\times C$, we consider the morphism $[m]\coloneqq \id_S \times \underline{m} \colon Y\to Y$, where $\underline{m} \colon C \to C$ is the multiplication map on $C$ by $m\in \Z_{>0}$. For any line bundle $L$ on $Y$ with $D^2.L>0$,  
the vector bundle $[m]_*L$ is $\mu_D$-semistable since $[m]$ is {\'e}tale. Furthermore, we have $$\lim_{m\to \infty}\mu_D([m]_*L)=0$$ and $\Delta_D([m]_*L)=0$. This also invalidates \ref{BG3} for $(Y, D)$ and any $\epsilon > 0$. 

It is worth mentioning that a stronger version of \ref{conj:bmt} (with $\Gamma=0$) is proved by \cite{bayer:the-space-of-stability-conditions-on-abelian-threefolds} when $X$ is covered by an abelian threefold. On the other hand, if $X$ is covered by $S\times C$, the existence of Bridgeland stability conditions on $X$ can be deduced from \cite{liu:stab-on-product} and \cite[Proposition 4.12]{perry:stab-on-quotient-product}.
Note, however, that the stability conditions constructed in \textit{loc. cit.} are not geometric in general (cf.~\cite[Remark 1.3]{perry:stab-on-quotient-product}).
\end{Ex}

\subsection{Step II: from surfaces to threefolds}

Next, we apply restriction techniques to further reduce \hyperref[BGn]{\ensuremath{\mathbf{BG_3(\epsilon)}
}} to \hyperref[BGn]{\ensuremath{\mathbf{BG_2(\delta)}}}
.

\begin{Lem} \label{lem:restriction}
Let $(X, H)$ be a polarized smooth projective variety of dimension $n \geq 2$, or a polarized normal projective surface with rational Gorenstein singularities, such that \ref{BGn} fails for $(X, H)$ and some $0<\epsilon<1/3$. Then there exists a reflexive $H$-stable sheaf $E$ on $X$ with
\begin{equation*}
    0 < \mu_H(E) \leq \frac{2\epsilon}{1 - \epsilon} \quad \text{and} \quad 
    \ch_2(E). H^{n-2} \geq -\frac{1}{2} \ch_1(E). H^{n-1},
\end{equation*}
such that for any divisor $D \in |H|$, each HN factor $F_i$ of $E|_D$ (with respect to $H_D$-stability) satisfies
\[
0 \leq \mu_{H_D}(F_i) \leq \frac{2\epsilon}{1 - 3\epsilon}.
\]
\end{Lem}
\begin{proof}
    Consider the continuous function $f_\epsilon \colon \R_{\geq 0} \to \R$ with 
\[
f_\epsilon(x) \coloneqq \begin{cases}
-\frac{1}{2} x & \text{if} \ 
x \in [0, \epsilon], \\
\frac{1+\epsilon}{2(1-\epsilon)}x-\frac{\epsilon}{1-\epsilon} & \text{if}\ x \in [\epsilon, \frac{2\epsilon}{1-\epsilon}],\\
\frac{1}{2} x^2 & \text{if} \ 
x \geq \frac{2\epsilon}{1-\epsilon} .
\end{cases}
\]

\begin{figure}[H]
\centering

\begin{tikzpicture}[x=7cm,y=7cm,>=Stealth,line cap=round,line join=round]

\def\eps{0.10}

\pgfmathsetmacro{\Ex}{2*\eps/(1-\eps)}
\pgfmathsetmacro{\Ey}{0.5*\Ex*\Ex}

\coordinate (O) at (0,0);
\coordinate (A) at (\eps,{-0.5*\eps});
\coordinate (D) at (1,0.5);
\coordinate (E) at (\Ex,\Ey);

\draw[->] (-0.15,0) -- (1.30,0);
\draw[->] (0,-0.15) -- (0,0.60);

\draw[very thick,black!70,domain=-0.15:1.25,samples=200,smooth]
  plot (\x,{0.5*\x*\x});

\draw[very thick,blue] (O) -- (A) -- (D);

\foreach \P in {O,A,E,D}{
  \filldraw[fill=black!35,draw=black,thick] (\P) circle (1.6pt);
}


\draw[color=black] (0.1,-0.1) node {$\bigl(\epsilon,-\tfrac12\epsilon\bigr)$};


\draw[color=black] (0.16,0.13) node {$\left(\frac{2\epsilon}{1-\epsilon},\frac{2\epsilon^2}{(1-\epsilon)^2}\right)$};
\draw[color=black] (0.95,0.55) node {$\bigl(1,\tfrac12\bigr)$};
\draw[color=black] (1.12,0.76) node {$y=\frac{x^2}{2}$};

\end{tikzpicture}

\caption*{Figure. The blue graph represents the piecewise linear part of $f_{\epsilon}(x)$. The second line segment connects points $(\epsilon, -\frac{1}{2}\epsilon)$ and $(1, \frac{1}{2})$.}
\end{figure}

Let $\mathcal{S}_{\epsilon}$ be the set of all objects $E \in \mathrm{D}^{\mathrm{b}}(X)$ that are 
$\nu_{0,w}$–semistable for some $w>0$ or $\nu_{1,w'}$–semistable for some $w'>\tfrac{1}{2}$,  
satisfying  
\[
0 < \mu_H(E) \leq \frac{2\epsilon}{1-\epsilon}
\]
and
\begin{equation*}
\frac{\ch_2(E).H^{n-2}}{\ch_0(E)H^{n}} 
\;\geq\;
f_{\epsilon}\!\left(
\frac{\ch_1(E).H^{n-1}}{\ch_0(E)H^{n}}
\right)\geq -\frac{1}{2}\left(
\frac{\ch_1(E).H^{n-1}}{\ch_0(E)H^{n}}
\right).
\end{equation*}
%
Note that $\mathcal{S}_{\epsilon}$ is non-empty since \ref{BGn} fails for $(X, H)$ by our assumption.

Now, we pick an object $E \in \mathcal{S}_{\epsilon}$ with minimal $\Delta_H$ among all objects in $\mathcal{S}_{\epsilon}$. We first show that we can choose $E$ so that it is a reflexive $H$-stable sheaf and both $E$ and $E(-H)[1]$ are $\nu_{0, w}$-stable for any $w >0$. Indeed, we may assume that $E\in \Coh^0(X)$ is $\nu_{0,w}$-semistable for some $w>0$, as the argument for another case is similar. If $E$ is strictly $\nu_{0, w}$-semistable for some $w>0$, then by Lemma \ref{lem:JH-factor}, there exists a Jordan--H\"older factor $E'$ of $E$ with respect to $\nu_{0, w}$ such that $0<\mu_H(E')\leq \mu_H(E)$. Since the line segment connecting any point $(b,f_{\epsilon}(b))$ with $b\in [0,1]$ and $(0,0)$ is above or on the graph of $f_{\epsilon}$, $E'$ is also in $\mathcal{S}_{\epsilon}$. But this contradicts the minimality of $\Delta_H(E)$ as $\Delta_H(E')<\Delta_H(E)$ by \cite[Corollary 3.10]{bayer:the-space-of-stability-conditions-on-abelian-threefolds}. Therefore, $E$ is $\nu_{0,w}$-stable for any $w>0$ and so is an $H$-stable sheaf by \cite[Lemma 2.7(c)]{bayer:the-space-of-stability-conditions-on-abelian-threefolds}.
As its double dual $E^{\vee \vee}$ is also $H$-stable and $E^{\vee \vee}\in \mathcal{S}_{\epsilon}$, with $\mu_H(E)=\mu_H(E^{\vee \vee})$ and $\Delta_H(E^{\vee \vee})\leq \Delta_H(E)$, we may assume that $E$ is reflexive. Hence, $E[1]\in \Coh^1(X)$ is $\nu_{1,w'}$-stable for $w'\gg \frac{1}{2}$ by \cite[Lemma 2.7(c)]{bayer:the-space-of-stability-conditions-on-abelian-threefolds}. Since the line segment connecting any point $(b,f_{\epsilon}(b))$ with $b\in [0,1]$ and $(1,\frac{1}{2})$ is above or on the graph of $f_{\epsilon}$, using Lemma \ref{lem:JH-factor} and \cite[Corollary 3.10]{bayer:the-space-of-stability-conditions-on-abelian-threefolds} again as in the previous case, we also know that $E[1]$ is $\nu_{1,w'}$-stable for any $w'>\frac{1}{2}$, which implies the desired $\nu_{0,w}$-stability of $E(-H)[1]$ for any $w>0$.

Now, we consider the following exact sequence in $\Coh^0(X)$: 
\begin{equation} \label{eq:restriction-1}
0 \to E \to E|_D \to E(-H)[1] \to 0. 
\end{equation}
Let $(0, w_0)$ be the intersection point of the line $b=0$ and the line passing through $\Pi(E)$ and $\Pi(E(-H)[1])$. 
If $w_0$ is positive, then \cite[Corollary 4.1]{feyz:effective-restriction-theorem} implies that $E|_D$ is $H_D$-stable with $\mu_{H_D}(E|_D)=\mu_H(E) \in (0, \frac{2\epsilon}{1-\epsilon}]$.

It remains to consider the case when $w_0 \leq 0$. In this case, if $F_i$ is an HN factor of $E|_D$ with respect to $\mu_{H_D}$, from \eqref{eq:restriction-1} and the $\nu_{0,w}$-stability of $E$ and $E(-H)[1]$, we get 
$$-\frac{1}{2} \leq \nu_{BN}(E)  \leq \nu_{BN}(\iota_*F_i) \leq \nu_{BN}(E(-H)[1]).$$
And by Lemma \ref{lem:property-ch}(a), we see that 
\[
\nu_{BN}(\iota_*F_i)=\mu_{H_D}(F_i)-\frac{1}{2}, 
\]
where $\iota \colon D \hookrightarrow X$ denotes the inclusion. We also have
\begin{equation*} 
\nu_{BN}(E(-H)[1]) +\frac{1}{2} 
= \frac{1- 2\frac{\ch_2(E).H^{n-2}}{\ch_0(E)H^{n}} }{2(1-\mu_H(E))}- \frac{1}{2} \leq \frac{1+\mu_H(E)}{2(1-\mu_H(E))}-\frac{1}{2} .
\end{equation*}
Recall $\mu_H(E) \in (0, 2\epsilon/(1-\epsilon)]$, which implies 
\[
\frac{1+\mu_H(E)}{2(1-\mu_H(E))}-\frac{1}{2}
=\frac{\mu_H(E)}{1-\mu_H(E)}
\leq \frac{2\epsilon/(1-\epsilon)}{1-2\epsilon/(1-\epsilon)}
=\frac{2\epsilon}{1-3\epsilon}. 
\]
Putting these inequalities together, we obtain the claim. 
\end{proof}

As an immediate corollary, we obtain the desired dimensional reduction for \ref{BGn}.

\begin{Prop} \label{prop:surface-to-3fold-1}
Let $(X, H)$ be a polarised smooth projective variety of dimension $n \geq 3$, and $D \in |H|$ be an arbitrary divisor. Suppose that \hyperref[BGn]{\ensuremath{\mathbf{BG_{n-1}(\delta)}}} holds for $(D, H_D)$ and some $\delta >0$, and $\ch_2(F).H^{n-3}_D\leq 0$ for any $H_D$-stable sheaf $F$ with $\mu_{H_D}(F)=0$. Then \ref{BGn} holds for $(X, H)$ and $\epsilon \coloneqq \frac{\delta}{2+3\delta}.$
\end{Prop}
\begin{proof}
 Suppose \ref{BGn} fails for $(X, H)$. Then, by Lemma \ref{lem:restriction}, there exists a reflexive $H$-stable sheaf $E$ of slope $\mu_H(E)\in (0,2\epsilon/(1-\epsilon)]$ such that 
\begin{equation}\label{v}
    \ch_2(E) . H^{n-2} \geq -\frac{1}{2}\, \ch_1(E) . H^{n-1}.
\end{equation}
Moreover, each HN factor $F_i$ of $E|_D$ satisfies
\[
0 \leq \mu_{H_D}(F_i) \leq \frac{2\epsilon}{1 - 3\epsilon} = \delta.
\]
Hence, by our assumption on $D$, summing over all factors yields
\[
\ch_2(E|_D) . H_D^{\,n-3} < -\frac{1}{2}\, \ch_1(E|_D) . H_D^{\,n-2},
\]
which contradicts \eqref{v}.
\end{proof}

\subsection{Step III: from curves to surfaces}

The final step is to investigate a two-dimensional analogue of Proposition \ref{prop:surface-to-3fold-1}. In this situation, the correct formulation of \hyperref[BGn]{\ensuremath{\mathbf{BG_{n}(\epsilon)}}} for a curve $C$ should be a bound on $\bn_C$ as defined in Definition \ref{def:bnc}.

Recall that for any coherent sheaf $E$ on an integral projective curve $C$ of (arithmetic) genus $g = h^1(\cO_C)$, we can define the degree and the slope of $E$ as
\[\deg(E)\coloneqq\chi(E)+\rk(E)(g-1) \quad \text{and}\quad\mu(E)\coloneqq\frac{\deg(E)}{\rk(E)}.\]
\begin{Prop} \label{prop:curve-to-surface-smooth}
    Let $(S, H)$ be a polarised smooth projective surface such that $K_S = H$. Suppose for a smooth curve $C \in |H|$, we have 
\begin{equation} \label{eq:assumption-bn-chi}
\bn_C < \chi(\cO_S) .
\end{equation}
    Then there exists $\epsilon>0$ such that \hyperref[BGn]{\ensuremath{\mathbf{{BG}_{2}(\epsilon)}}} holds for $(S, H)$. 
\end{Prop}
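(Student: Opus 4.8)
The plan is to argue by contradiction, following the template of the proof of Proposition~\ref{prop:surface-to-3fold}. Suppose that for every $\epsilon>0$ there is an object $F$ violating \hyperref[tBGn]{\ensuremath{\mathbf{\widetilde{BG}_{2}(\epsilon)}}}. As in the first paragraph of that proof — using Lemma~\ref{lem:JH-factor}, \cite[Corollary~3.10]{bayer:the-space-of-stability-conditions-on-abelian-threefolds} and \cite[Lemma~2.7]{bayer:the-space-of-stability-conditions-on-abelian-threefolds}, together with the fact that passing to the double dual does not increase $\Delta_H$ — I would first reduce to the case in which $F$ is an $H$-stable \emph{reflexive} (hence locally free, since $S$ is smooth) sheaf with $\Delta_H(F)$ minimal among all counterexamples, such that both $F$ and $F(-H)[1]$ are $\nu_{0,w}$-stable for every $w>0$, and $\mu_H(F)\in\big(0,\tfrac{2\epsilon}{1-\epsilon}\big]$.

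Next, since $K_S=H_S$, Riemann--Roch and Serre duality on $S$ give
\[
\ch_2(F)-\tfrac12\ch_1(F).H+\chi(\cO_S)\ch_0(F)=\chi(F)\le h^0(F)+h^0\big(F^\vee(H)\big).
\]
I would then feed in the two restriction sequences
\[
0\to F(-H)\to F\to F|_C\to 0,\qquad 0\to F^\vee\to F^\vee(H)\to F^\vee(H)|_C\to 0 .
\]
As $\mu_H(F)$ is small and positive, $H$-stability forces $\mu_H(F(-H))<0$ and $\mu_H(F^\vee)<0$, hence $h^0(F(-H))=h^0(F^\vee)=0$, so that $h^0(F)\le h^0(F|_C)$ and $h^0(F^\vee(H))\le h^0(F^\vee(H)|_C)$. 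Combining, $\chi(F)\le h^0(F|_C)+h^0\big(F^\vee(H)|_C\big)$, and everything is now reduced to estimating dimensions of spaces of sections on the single curve $C$.

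For this I would again borrow the slope-control argument from the proof of Proposition~\ref{prop:surface-to-3fold}: the $\nu_{0,w}$-stability of $F$ and of $F(-H)[1]$ forces every Harder--Narasimhan factor of $F|_C$ to have slope in $[0,\delta]$, and every Harder--Narasimhan factor of $F^\vee(H)|_C$ to have slope in $(g-1-\delta,\,g-1]$, where $g-1=H^2$ and $\delta$ is a multiple of $\tfrac{2\epsilon}{1-3\epsilon}$, so $\delta\to0$ as $\epsilon\to0$. Passing to stable Jordan--H\"older factors of the HN factors of $F^\vee(H)|_C$, whose slopes remain in $(g-1-\delta,g-1]$, Definition~\ref{def:bnc} lets me choose $\epsilon$ (hence $\delta$) small enough that each such factor $G$ has $h^0(G)/\rk(G)\le\bn_C+\eta$ for some $\eta=\eta(\delta)\to0$; summing, $h^0(F^\vee(H)|_C)\le(\bn_C+\eta)\ch_0(F)$. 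For $h^0(F|_C)$ I would use the small-slope estimates of Section~\ref{subsec:classical-bound} on the positive-slope HN factors of $F|_C$, together with the observation that $\deg(F|_C)=\ch_1(F).H>0$ forces the positive-slope part of $F|_C$ to have rank at least $\ch_1(F).H/\delta$, which bounds the rank of the slope-zero HN factor and hence controls $h^0(F|_C)$ from above. Substituting these two bounds into $\chi(F)\le h^0(F|_C)+h^0(F^\vee(H)|_C)$, dividing through by $H^2\ch_0(F)$ and rearranging, I expect an inequality of the form
\[
\frac{\ch_2(F)}{H^2\ch_0(F)}\ \le\ c_1\,\mu_H(F)+\frac{1}{H^2}\big(\bn_C+\eta-\chi(\cO_S)+c_2\big),
\]
with controlled constants $c_1,c_2$ and, crucially, $\bn_C+\eta+c_2<\chi(\cO_S)$ once $\epsilon$ is small: this is where \eqref{eq:assumption-bn-chi} enters. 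Since $f_\epsilon(x)\ge-\tfrac12x$ for $x\ge0$ and $\mu_H(F)\le\tfrac{2\epsilon}{1-\epsilon}$, shrinking $\epsilon$ further makes the right-hand side strictly less than $f_\epsilon(\mu_H(F))$, contradicting that $F$ violates \hyperref[tBGn]{\ensuremath{\mathbf{\widetilde{BG}_{2}(\epsilon)}}}.

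The main obstacle I anticipate is the \emph{sharp threshold} $\chi(\cO_S)$ in \eqref{eq:assumption-bn-chi}, as opposed to the weaker $\chi(\cO_S)-1$ that the crudest bookkeeping produces (and which is all that is needed in the singular case). Getting this right amounts to taking the constant $c_2$ above to be $<1$, which requires carefully balancing the contribution of the slope-zero Harder--Narasimhan factor of $F|_C$ to $h^0(F|_C)$ — equivalently, ruling out a large (near-)trivial summand of $F|_C$ by exploiting reflexivity and the $\nu$-stability of $F$ — against the degree constraint $\deg(F|_C)>0$ and the dual contribution of that factor (which appears in $F^\vee(H)|_C$ with slope exactly $g-1$) to $h^0(F^\vee(H)|_C)$. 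A secondary technical point is that the curve $C\in|H_S|$ is \emph{fixed in advance}, not general, so one cannot invoke a restriction theorem to make $F|_C$ semistable; this is precisely why the argument is run through the Harder--Narasimhan filtration and why only an arbitrarily small neighbourhood of slope $g-1$ of the Brill--Noether spectrum of $C$ is used.
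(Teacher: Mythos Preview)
Your approach follows the template of the \emph{singular} case (Proposition~\ref{prop:curve-to-surface}) rather than the paper's proof of the smooth case, and you have correctly identified the obstacle: the bookkeeping via $\chi(F)\le h^0(F|_C)+h^0(F^\vee(H)|_C)$ only delivers the threshold $\chi(\cO_S)-1$, not $\chi(\cO_S)$. Unfortunately, your proposed fix --- squeezing the slope-zero HN factor of $F|_C$ --- does not close this gap. Writing $r'$ for the rank of that factor, the best one gets from $h^0\le 1\cdot r'$ on the slope-zero side and $h^0\le \bn_C\cdot r'$ on the dual side (slope exactly $g-1$) still leaves a residual constant $+1$ after dividing by $\ch_0(F)$; the degree constraint $\deg(F|_C)>0$ only gives $r'\le \ch_0(F)-\ch_1(F).H/\delta$, which is useless when $\mu_H(F)$ is tiny. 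So the constant $c_2$ in your display cannot be pushed below $1$ by this route.

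The paper's key new idea in the smooth case is to replace $\chi(F)$ by $\chi(F,F)$. Since $F$ is locally free and simple, Riemann--Roch and Serre duality give
\[
-\Delta(F)+\chi(\cO_S)\ch_0(F)^2=\chi(F,F)\le 1+h^0\big(F\otimes F^\vee(H)\big)\le 2+h^0\big(F\otimes F^\vee(H)|_C\big).
\]
Now the HN factors of $F\otimes F^\vee(H)|_C$ are tensor products $F_i\otimes F_j^\vee(H)$ with slopes in $[g-1-\delta H^2,\,g-1+\delta H^2]$, so \emph{every} factor is governed by $\bn_C$; there is no troublesome slope-zero piece. Summing gives $h^0\le A\,\ch_0(F)^2$ for some $A$ just above $\bn_C$, and after dividing by $2H^2\ch_0(F)^2$ one obtains
\[
\frac{\ch_2(F)}{H^2\ch_0(F)}\le \tfrac12\mu_H(F)^2+\frac{1}{2H^2}\Big(A-\chi(\cO_S)+\frac{2}{\ch_0(F)^2}\Big).
\]
The residual $2/\ch_0(F)^2$ is harmless because $\ch_0(F)\ge \ch_1(F).H/(\delta H^2)\ge 1/(\delta H^2)\to\infty$ as $\epsilon\to 0$. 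This is precisely what buys the sharp threshold $\chi(\cO_S)$ and is the step your proposal is missing.
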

\begin{proof}
Assume for a contradiction that \hyperref[BGn]{\ensuremath{\mathbf{BG_{2}(\epsilon)}}} fails for $(S, H)$ and some $0<\epsilon \ll 1$. Then there is a reflexive $H$-stable sheaf (hence locally free) $E$ as described in Lemma \ref{lem:restriction}. In particular, we know
\begin{equation}\label{assumption}
    0 < \mu_H(E) \leq \frac{2\epsilon}{1 - \epsilon} \quad\text{and}\quad 
    \ch_2(E) \geq -\frac{1}{2}\, \ch_1(E). H.
\end{equation} 
By the Hirzebruch--Riemann--Roch formula and Serre duality, we obtain 
\begin{equation*}\label{eq:chiEE}
\begin{aligned}
-\Delta(E)+\chi(\cO_S)\ch_0(E)^2=\chi(E, E) 
&\leq \mathrm{hom}(E, E) +\mathrm{hom}(E, E(H)) \\
&\leq  1+ 1+ \mathrm{hom}(E,  E(H)|_C), 
\end{aligned}
\end{equation*}
where second inequality is obtained by applying $\Hom(E, -)$ to
the exact sequence $$0 \to E \to E(H) \to E(H)|_C \to 0.$$ Together with the Hodge index theorem, we then get 
\begin{equation}\label{h-1}
    -\frac{1}{H^2}\Delta_H(E)+\chi(\cO_S)\ch_0(E)^2 \leq 2 + h^0\big(E^{\vee} \otimes E(H)|_C\big). 
\end{equation}

By Lemma \ref{lem:restriction}, each HN factor $F_i$ of the restriction $E|_C$ satisfies
\[
0 \leq \mu(F_i) = H^2\cdot \mu_{H_C}(F_i) \leq H^2\cdot\frac{2\epsilon}{1-3\epsilon}. 
\]
The HN filtration of $E$ induces a filtration of $E \otimes E^\vee (H)|_C$ whose factors are semistable bundles $F_i \otimes F_j^\vee (H_C)$ with slope 
\[
\mu\big(F_i \otimes F_j^\vee (H_C)\big) \in 
\bigg[g-1- \frac{2\epsilon}{1-3\epsilon} H^2,\, g-1+ \frac{2\epsilon}{1-3\epsilon} H^2\bigg]
\] 
as $g-1 = \frac{1}{2}H.(H+K_S) = H^2$. If we choose $\epsilon$ sufficiently small, then by the definition of $\bn_C$ and \eqref{eq:assumption-bn-chi}, we have
\begin{equation*}
    \frac{h^0\big(F_i\otimes F_j^\vee(H)\big)}{\rk\big(F_i\otimes F_j^\vee(H)\big)}
    \leq \frac{\chi(\cO_S) +\bn_C}{2}
\end{equation*}
for all $i,j$. Summing $h^0(F_i \otimes F_j^\vee(H))$ over all $i,j$, we obtain 
\[
\frac{h^0\big(E\otimes E^\vee(H)|_C\big)}{\rk\big(E\otimes E^\vee(H)|_C\big)} 
\leq \frac{\chi(\cO_S) +\bn_C}{2},
\]
which together with \eqref{h-1} implies
\begin{align*}
-\frac{1}{H^2}\Delta_H(E)+\chi(\cO_S)\ch_0(E)^2 \leq 2+ \frac{\chi(\cO_S) +\bn_C}{2}\ch_0(E)^2. 
\end{align*}
Dividing both sides by $2H^2\ch_0(E)^2$ and rearranging, we get 
\begin{align*}\label{eq-ch2-smooth}
\frac{\ch_2(E)}{H^2\ch_0(E)} & \leq 
\frac{1}{2}\mu_H(E)^2+
\frac{1}{2H^2}\left( 
\frac{\bn_C - \chi(\cO_S) }{2}+\frac{2}{\ch_0(E)^2}
\right)\\
& < -\frac{1}{2}\mu_H(E),
\end{align*}
where the second inequality holds when $\mu_H(E)$ is sufficiently small by choosing $\epsilon$ small enough in \eqref{assumption}, since $\bn_C<\chi(\cO_S)$ and $$\ch_0(E)H^2=\frac{\ch_1(E).H}{\mu_H(E)}\geq \frac{1}{\mu_H(E)}.$$ Hence 
$\ch_2(E) < -\tfrac{1}{2}\ch_1(E). H$, contradicting the assumption on $\ch_2(E)$ in \eqref{assumption}.
\end{proof}

Without the smoothness assumption, the self tensor product of a reflexive sheaf $F$ may not be torsion-free, and $\chi(F, F)$ may not be well-defined. In the following, we use an alternative approach to prove a weaker version of Proposition \ref{prop:curve-to-surface-smooth} for mildly singular $S$ and $C$.

Recall that an integral projective curve $C$ is Cohen--Macaulay, hence the property of dualizing sheaf $\omega_C$ implies
\begin{equation}\label{eq-serre-dual}
    \mathrm{ext}^i(E,\omega_C)=h^{1-i}(E)
\end{equation}
for any coherent sheaf $E$ on $C$. In particular, we get $h^i(\omega_C)=h^{1-i}(\cO_C)$ for any $i$ and $\deg(\omega_C)=2g-2$. Moreover, if $C$ is Gorenstein, then according to \cite[Lemma 1.1]{hartshorne:divisor-on-gorenstein-curve}, we have
\[\cE xt_C^i(E,\cO_C)=0\]
for any torsion-free sheaf $E$ on $C$ and $i\neq 0$. In particular, the usual dual coincides with the derived dual for torsion-free sheaves, and we have $E^{\vee\vee}=E$, $\mu(E)=-\mu(E^{\vee})$, and
\[h^0(E^{\vee})=\mathrm{hom}(E, \cO_C)=h^1(E\otimes \omega_C)\]
as in the smooth case. Therefore, the basic theory of (semi)stable sheaves over smooth curves applies without any change in this case.

\begin{Prop} \label{prop:curve-to-surface}
    Let $(S, H)$ be a polarised normal projective surface with rational Gorenstein singularities such that $K_S = H$. Suppose for an integral curve $C \in |H|$, we have 
\begin{equation} \label{eq:assumption-bn}
\bn_C < \chi(\cO_S) - 1.
\end{equation}
    Then there exists $\epsilon>0$ such that \hyperref[BGn]{\ensuremath{\mathbf{BG_{2}(\epsilon)}}} holds for $(S, H)$. 
\end{Prop}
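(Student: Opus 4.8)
The plan is to follow the commented-out argument and the proof of Proposition~\ref{prop:curve-to-surface-smooth}, but since on a singular surface the self-tensor product $F\otimes F^\vee$ of a reflexive sheaf need not be torsion-free, so that $\chi(F,F)$ is unavailable, I would replace the $\chi(F,F)$-estimate by the cruder bound $\chi(F)\le h^0(F)+h^2(F)$. Concretely, arguing by contradiction, suppose some object $F$ violates \hyperref[tBGn]{\ensuremath{\mathbf{\widetilde{BG}_{2}(\epsilon)}}} with $\Delta_H(F)$ minimal among such objects. Running the reduction in the proof of Proposition~\ref{prop:surface-to-3fold} verbatim --- all of whose inputs (Lemma~\ref{lem:JH-factor}, \cite[Corollary~3.10]{bayer:the-space-of-stability-conditions-on-abelian-threefolds}, \cite[Lemma~2.7(c)]{bayer:the-space-of-stability-conditions-on-abelian-threefolds}) are valid on the normal Gorenstein surface $S$ --- I may assume $F$ is a reflexive $H$-stable sheaf such that $F$ and $F(-H)[1]$ are $\nu_{0,w}$-stable for all $w>0$, and hence $F[1]\in\Coh^1(S)$ is $\nu_{1,w'}$-stable for all $w'>\tfrac12$; the classical BG inequality then forces $0<\mu_H(F)\le\tfrac{2\epsilon}{1-\epsilon}$, which I take small. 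On the Gorenstein surface $S$ a reflexive sheaf is Cohen--Macaulay, so $F^\vee\coloneqq\hom_S(F,\cO_S)$ coincides with $R\hom_S(F,\cO_S)$ and is again reflexive, and the duality of tilt-stability (in the spirit of \cite[Proposition~5.1.3]{bayer:bridgeland-stability-conditions-on-threefolds}, applied on Gorenstein surfaces) transfers the stability of $F$ and $F[1]$ to $\nu_{0,w}$-stability of $F^\vee(H)$ and of $F^\vee[1]$ for all $w>0$.

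Next I would restrict to $C$. Since $K_S=H$ and $\omega_S=\cO_S(H)$ is a line bundle, Serre duality gives $h^2(F)=h^0(F^\vee(H))$, hence
\[\chi(F)=\ch_2(F)-\tfrac12\,\ch_1(F).H+\chi(\cO_S)\,\ch_0(F)\ \le\ h^0(F)+h^0(F^\vee(H)).\]
Because $C$ is an integral Cartier divisor and $F$, $F^\vee(H)$ are torsion-free, multiplication by the equation of $C$ is injective on them, so the restriction sequences
\[0\to F(-H)\to F\to F|_C\to 0,\qquad 0\to F^\vee\to F^\vee(H)\to F^\vee(H)|_C\to 0\]
are exact; moreover $F$ and $F^\vee(H)$ are Cohen--Macaulay of depth $2$, so $F|_C$ and $F^\vee(H)|_C$ are depth-$1$, hence torsion-free, sheaves on the Cohen--Macaulay curve $C$, to which the theory recalled just before the statement applies. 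As $F(-H)$ and $F^\vee$ are $H$-stable of negative slope for $\epsilon$ small, we get $h^0(F(-H))=h^0(F^\vee)=0$, and the two long exact sequences yield $\chi(F)\le h^0(F|_C)+h^0(F^\vee(H)|_C)$.

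I would then estimate the two terms on $C$. Running the $\nu_{BN}$/Lemma~\ref{lem:JH-factor} bookkeeping of Proposition~\ref{prop:surface-to-3fold} with the four stability statements above, applied to $0\to F\to F|_C\to F(-H)[1]\to 0$ and $0\to F^\vee(H)\to F^\vee(H)|_C\to F^\vee[1]\to 0$ in $\Coh^0(S)$, one gets $\delta=\delta(\epsilon)\to 0$ such that every Harder--Narasimhan factor $F_i$ of $F|_C$ has $\mu(F_i)\in[0,\delta]$ and every HN factor $G_j$ of $F^\vee(H)|_C$ has $\mu(G_j)\in(g-1-\delta,g-1+\delta)$, where $g-1=\tfrac12 H.(H+K_S)=H^2$ by adjunction. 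For the first term, Lemma~\ref{lem:small-slope} on $C$, applied to each $F_i$ (or to its stable Jordan--H\"older factors) and summed, gives $\tfrac{h^0(F|_C)}{\ch_0(F)}\le 1-\tfrac1g\cdot\tfrac{\ch_0(F)-r'}{\ch_0(F)}+\tfrac{\mu(F|_C)}{g}$, where $r'$ is the rank of the slope-zero HN factor. For the second term, since \eqref{eq:assumption-bn} reads $\bn_C<\chi(\cO_S)-1$, I fix once and for all a constant $A$ with $\bn_C\le A<\chi(\cO_S)-1$, and then take $\epsilon$ (hence $\delta$) small enough that the definition of $\bn_C$ yields $h^0(G_j)/\rk(G_j)\le A$ for all $j$, so $h^0(F^\vee(H)|_C)\le A\,\ch_0(F)$. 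Feeding both bounds into $\chi(F)\le h^0(F|_C)+h^0(F^\vee(H)|_C)$, dividing by $H^2\ch_0(F)$ and using $\mu(F|_C)=H^2\mu_H(F)$ and $g=H^2+1$, I obtain
\[\frac{\ch_2(F)}{H^2\ch_0(F)}\ \le\ \Big(\tfrac12+\tfrac1{H^2+1}\Big)\mu_H(F)+\frac1{H^2}\Big(A-\chi(\cO_S)+1-\tfrac1g\cdot\tfrac{\ch_0(F)-r'}{\ch_0(F)}\Big),\]
whose last bracket is $\le A-\chi(\cO_S)+1<0$; shrinking $\epsilon$ a final time makes the right-hand side $<-\tfrac12\mu_H(F)\le f_\epsilon(\mu_H(F))$ for every $\mu_H(F)\in(0,\tfrac{2\epsilon}{1-\epsilon}]$, contradicting that $F$ violates \hyperref[tBGn]{\ensuremath{\mathbf{\widetilde{BG}_{2}(\epsilon)}}}.

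The hard part will be the estimates on $C$: transferring tilt-stability to the derived dual $F^\vee$ on the possibly singular surface $S$, and then confining the HN slopes of $F^\vee(H)|_C$ to an arbitrarily small neighbourhood of $g-1$ via the $\nu_{BN}$-argument of Proposition~\ref{prop:surface-to-3fold}, so that the defining limit in the definition of $\bn_C$ can be invoked; some care is needed here around the finite locus where $F$ fails to be locally free. I also expect the extra ``$-1$'' in the hypothesis to be genuinely necessary: replacing the $\chi(F,F)$-estimate by $\chi(F)\le h^0(F)+h^2(F)$ reintroduces the term $h^0(F|_C)$, whose small-slope contribution Lemma~\ref{lem:small-slope} can only bound by $\ch_0(F)$ up to a $\tfrac1g\cdot\rk$ saving, and this is precisely what degrades the smooth-case bound $\chi(\cO_S)$ to $\chi(\cO_S)-1$.
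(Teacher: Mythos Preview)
Your overall strategy is correct and reaches the same contradiction, but the paper's proof is simpler at exactly the point you flag as ``the hard part.'' You propose to transfer tilt-stability to $F^\vee$ on the singular surface $S$ and then rerun the restriction argument on the second exact sequence $0\to F^\vee(H)\to F^\vee(H)|_C\to F^\vee[1]\to 0$ in $\Coh^0(S)$ to confine the slopes of the HN factors $G_j$ of $F^\vee(H)|_C$. The paper bypasses this entirely: since $C$ is an integral Gorenstein curve (being Cartier in the Gorenstein surface $S$), duality on $C$ is well-behaved (as recalled in the paragraph preceding the proposition), so the HN factors of $F^\vee(H)|_C$ are simply the twists by $\cO_C(H)$ of the duals of the HN factors $F_i$ of $F|_C$. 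Hence $\mu(G_j)=g-1-\mu(F_i)\in[g-1-\delta H^2,\,g-1]$ directly, with no need to establish tilt-stability of $F^\vee$ on $S$. This also gives a one-sided interval, matching Remark~\ref{rmk:left-limit}.

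A second, minor difference: for the bound on $h^0(F|_C)$ you invoke Lemma~\ref{lem:small-slope}, which is commented out in the paper. The paper instead uses the cruder Lemma~\ref{lem:weak-bound}, yielding $h^0(F|_C)\le \ch_0(F)+\tfrac12\ch_1(F).H$ and the final inequality
\[
\frac{\ch_2(F)}{H^2\ch_0(F)}\le \mu_H(F)+\frac{1}{H^2}\big(A-\chi(\cO_S)+1\big),
\]
with the same negative constant term. Your sharper estimate is not needed; the extra $-1$ in the hypothesis \eqref{eq:assumption-bn} already absorbs the loss.
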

\begin{proof}
 We assume again that \hyperref[BGn]{\ensuremath{\mathbf{BG_{2}(\epsilon)}}} fails for $(S, H)$ and some $0<\epsilon \ll 1$, and consider the reflexive $H$-stable sheaf $E$ described in Lemma \ref{lem:restriction}. By the Hirzebruch--Riemann--Roch formula (cf.~Lemma \ref{lem:property-ch}(a)) and Serre duality, we have 
\begin{equation} \label{eq-RR+SD}
\ch_2(E)-\frac{\ch_1(E). H}{2}+\chi(\cO_S)\ch_0(E)
=\chi(E) \leq h^0(E)+h^0(E^\vee(H)).     
\end{equation}

Consider the exact sequences
\begin{align*}
&0 \to E(-H) \to E \to E|_C \to 0, \\
&0 \to E^\vee \to E^\vee(H) \to E^\vee(H)|_C \to 0. 
\end{align*}
By the stability of $E$ and the assumption on its slope, we have $h^0(E(-H))=h^0(E^\vee)=0.$ Using the sequences above together with \eqref{eq-RR+SD}, we obtain
\begin{equation} \label{eq:restr-to-C-1}
\chi(E) \leq h^0(E|_C)+h^0(E^\vee(H)|_C).     
\end{equation}

By Lemma \ref{lem:restriction}, each HN factor $F_i$ of the restriction $E|_C$ satisfies 
\[
0 \leq \mu(F_i) \leq H^2 \cdot\frac{2\epsilon}{1-3\epsilon}. 
\]
Choosing $\epsilon$ sufficiently small and applying \cite[Theorem 2.1]{newstead:geography-of-brill-noethr-loci} (see also Lemma \ref{lem:weak-bound}) implies
\[
\frac{h^0(F_i)}{\rk(F_i)} \leq 1+\frac{\mu(F_i)}{2}.
\]
Summing over all factors, we get
\begin{equation} \label{eq:Cliff(F)-1}
h^0(E|_C) \leq \ch_0(E)+\frac{\ch_1(E). H}{2}. 
\end{equation}

On the other hand, since each HN factor $G_j$ of $E^\vee(H)|_C$ is the twist of the dual of a factor of $E|_C$, we have
\[
\mu(G_j) \in 
\left[g-1-\frac{2\epsilon}{1-3\epsilon}H^2,\ g-1\right].
\]
Hence, choosing $\epsilon$ small enough and summing over all factors yields
\begin{equation} \label{eq:Cliff(Fdual)-1}
h^0(E^\vee(H)|_C) \leq  
\frac{\bn_C + \chi(\cO_S)-1}{2}\,\ch_0(E).
\end{equation}

Combining \eqref{eq-RR+SD}, \eqref{eq:restr-to-C-1}, \eqref{eq:Cliff(F)-1}, and \eqref{eq:Cliff(Fdual)-1}, we get
\begin{align*}
\ch_2(E)-\frac{\ch_1(E). H}{2}+\chi(\cO_S)\ch_0(E)
\leq 
\ch_0(E)+\frac{\ch_1(E). H}{2}
+\frac{\bn_C + \chi(\cO_S)-1}{2}\,\ch_0(E).
\end{align*}

Dividing both sides by $H^2\ch_0(E)$ and rearranging, we obtain
\begin{align*}
\frac{\ch_2(E)}{H^2\ch_0(E)}
\leq\;
\mu_H(E)
+\frac{1}{H^2}
\left(\frac{\bn_C - \chi(\cO_S)+1}{2}\right)
< -\frac{1}{2}\mu_H(E),
\end{align*}
where the last inequality holds for $\epsilon$ small enough as $\bn_C - \chi(\cO_S)+1<0$. This contradicts the condition on $\ch_2(E)$ in Lemma \ref{lem:restriction}.
\end{proof}

\section{Bounds on the Brill--Noether number}\label{sec:bn-bound}

In this section, we derive various bounds on the dimension of global sections of semistable sheaves on integral curves.

The following definition generalizes $\bn_C$.

\begin{Def}
    Let $C$ be an integral projective curve of genus $g\geq 1$. We define the \emph{Brill--Noether (BN) function} $\Psi_{\bn,C} \colon \R \to \R\cup\{+\infty\}$ as
    \[\Psi_{\bn, C} (x) = \limsup_{q \to x} \sup \left\{ \frac{h^0(E)}{\rk(E)} \colon \ \text{$E$ is a stable sheaf on $C$ with } \mu(E) =q    \right\}. \]
\end{Def}

The function $\Psi_{\bn, C} (x)$ is upper semicontinuous and we have
\[\frac{h^0(E)}{\rk(E)}\leq \Psi_{\bn, C}(\mu(E))\]
for any semistable sheaf $E$ on $C$. It is straightforward to check $\Psi_{\bn, C}(2g-2)=g=h^0(\omega_C)$. Moreover, by \eqref{eq-serre-dual}, we have
\[
\Psi_{\bn, C} (x)=
\begin{cases}
  x+1-g, & \text{if }x\in (2g-2,+\infty) \\
  0, & \text{if }x\in (-\infty,0).
\end{cases}
\]Therefore, we only consider the restriction of $\Psi_{\bn, C} (x)$ to the interval $[0,2g-2)$. This function is previously studied in the context of higher rank Brill--Noether theory of smooth curves; see \cite{bigas:brill-noether-for-stable-vector-bundle} for a survey. For our purposes, we are only interested in its value at $g-1$, since
\[
\bn_C = \Psi_{\bn, C}(g-1).
\]

\begin{Rem}\label{rmk:left-limit}
When $C$ is Gorenstein, Serre duality yields
\[\Psi_{\bn, C} (g-1+t)-\Psi_{\bn, C} (g-1-t)=t\]
for any $t\in [0,g-1]$. In particular, we get
\[\bn_C = \lim_{t\to 0^+} \sup \left\{ \frac{h^0(E)}{\rk(E)} \colon \ \text{$E$ is a stable sheaf on $C$ with } \mu(E)\in (g-1-t,g-1]    \right\}. \]
We will use this equivalent definition of $\bn_C$ without mentioning.
\end{Rem}


\subsection{Classical bounds}\label{subsec:classical-bound}

First, we review some classical bounds on $\Psi_{\bn, C}(x)$ and $\bn_C$. We begin with the most general bound.

\begin{Lem}[{G. Xiao}]\label{lem:weak-bound}
Let $C$ be an integral projective curve $C$ of genus $g\geq 1$. Then 
\[\Psi_{\bn, C}(x)\leq \frac{x}{2}+1\]
for any $x\in [0,2g-2]$. In particular,
\[\bn_C\leq \frac{g-1}{2}+1.\]
\end{Lem}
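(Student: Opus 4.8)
The plan is to reduce to the case of a stable sheaf $E$ with $\mu(E) = x$ (since $\Psi_{\bn,C}$ is an upper-semicontinuous limit supremum, and the bound $\tfrac{x}{2}+1$ is continuous in $x$, it suffices to bound $h^0(E)/\rk(E)$ for stable $E$ with $\mu(E)$ arbitrarily close to $x$, and the claimed inequality then passes to the limit). So fix a stable sheaf $E$ on $C$ of rank $r$ and degree $d = \deg(E)$, with $0 \le d/r \le 2g-2$. The goal is $h^0(E) \le \tfrac{d}{2} + r$.

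The heart of the argument is a sub-line-bundle / sub-sheaf counting trick attributed to G.~Xiao. First I would handle the rank-one case directly: for a torsion-free rank-one sheaf $L$ on $C$ with $0 \le \deg L \le 2g-2$, Clifford's theorem (valid on integral, in particular Gorenstein or Cohen--Macaulay, curves using $\omega_C$ and the Cohen--Macaulay duality \eqref{eq-serre-dual}) gives $h^0(L) \le \tfrac{1}{2}\deg L + 1$, which is exactly the desired bound. For higher rank, I would pick a nonzero section $s \in H^0(E)$ and consider the rank-one subsheaf $L \subset E$ it generates (the image of $\cO_C \xrightarrow{s} E$, saturated if one wishes). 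Then one gets an exact sequence $0 \to L \to E \to E/L \to 0$ with $E/L$ of rank $r-1$; stability of $E$ forces $\mu(L) < \mu(E)$ (so $\deg L < d/r \le 2g-2$, keeping $L$ in the Clifford range as long as $\deg L \ge 0$, which holds since $L$ has a section), and also controls $\deg(E/L)$ so that $E/L$ — or rather its semistable HN factors — stay in the admissible slope range $[0, 2g-2]$. Applying Clifford to $L$ and induction on the rank to $E/L$ (or to the HN factors of $E/L$), then adding $h^0(L) + h^0(E/L) \ge h^0(E)$, yields $h^0(E) \le \tfrac{1}{2}\deg L + 1 + \tfrac{1}{2}\deg(E/L) + (r-1) = \tfrac{d}{2} + r$ since degrees add.

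The main obstacle I anticipate is bookkeeping at the boundary of the slope range and on singular curves. One must be careful that $E/L$ need not be semistable, so the induction should really be set up on HN factors, each of which must be checked to have slope in $[0, 2g-2]$; the slope of the last HN factor of $E/L$ could in principle be negative if $\deg L$ is large, but since $\deg L$ is bounded above by stability ($\mu(L) < \mu(E) \le 2g-2$) and the total degree is nonnegative, a short case analysis should confirm all factors land in range (and any factor of nonpositive slope contributes $h^0 = 0$, which only helps). On the singular (integral Cohen--Macaulay) curve, I would invoke the dualizing sheaf formalism recalled in \eqref{eq-serre-dual} and the Gorenstein duality facts quoted before Proposition~\ref{prop:curve-to-surface} to ensure Clifford's theorem and the rank/degree additivity still hold verbatim; this is standard but should be stated. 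The final specialization $x = g-1$ gives $\bn_C = \Psi_{\bn,C}(g-1) \le \tfrac{g-1}{2} + 1$ immediately.
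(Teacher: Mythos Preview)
Your overall scaffolding --- rank-one base case via Clifford on integral curves, induction on rank via an exact sequence $0\to F\to E\to G\to 0$, and the linear additivity $h^0(E)\le h^0(F)+h^0(G)$ --- matches the paper. The gap is in the choice of $F$.

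You take $F=L$ to be the rank-one subsheaf generated by a section, and then assert that ``a short case analysis should confirm all HN factors of $E/L$ land in range $[0,2g-2]$''. This is not true, and the problem is the \emph{upper} bound, not the lower one you worry about. The preimage in $E$ of the top HN factor $F_1\subset E/L$ has slope $<\mu(E)$ by stability, which only gives $\mu(F_1)<(1+1/\rk F_1)\,\mu(E)$; for $\rk F_1=1$ this is $2\mu(E)$, which can reach $4g-4$. Concretely, for $r=2$, $\mu(E)=2g-2$, and $L=\cO_C$, the quotient $E/L$ is a line bundle of degree $4g-4$, for which $h^0=3g-3$ exceeds $\deg/2+1=2g-1$ once $g\ge3$. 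So the induction does not close.

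The paper avoids this by choosing $F$ to be a \emph{maximal-slope} proper subsheaf of $E$ rather than a line subsheaf. Then both $F$ and $G=E/F$ are automatically semistable (any subsheaf of $G$ lifts to a subsheaf of $E$ strictly containing $F$, whose slope would exceed $\mu(F)$ if $G$ were unstable). After first disposing of the trivial cases $h^0(E)=0$ and $h^1(E)=0$ (you omit the latter; it is handled directly by $h^0(E)=\chi(E)\le d/2+r$ since $\mu(E)\le 2g-2<2g$), the nontrivial maps $\cO_C\to E$ and $E\to\omega_C$ force $\mu(F)\ge 0$ and $\mu(G)\le 2g-2$, keeping both pieces in the Clifford range so the induction goes through.
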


\begin{proof}
Given a semistable sheaf $E$ on $C$ with $\mu(E)\in [0,2g-2]$, we aim to show that
\[h^0(E)\leq \frac{1}{2}\deg(E)+\rk(E).\]
When $\rk(E)=1$, this is \cite[Theorem A]{eisenbud:det-eq}. Then the general case follows from the same induction argument as in \cite[Theorem 2.1]{newstead:geography-of-brill-noethr-loci}.
\end{proof}

The bound in Lemma \ref{lem:weak-bound} is not optimal in general. For instance, we have the following improvement using the Clifford index. Recall that the \emph{Clifford index} of a smooth projective curve $C$ of genus $g\geq 4$ is defined as
\[\mathrm{Cliff}(C)\coloneqq \min_{L\in \Pic(C)}\left\{ \deg L-2(h^0(L)-1) \colon \  h^0(L)\geq 2, ~ \deg(L)\leq g-1\right\}.\]
It is well-known that $\mathrm{Cliff}(C)=0$ if and only if $C$ is hyperelliptic, and $\mathrm{Cliff}(C)=1$ if and only if $C$ is trigonal or a planar quintic. If we define the \emph{gonality} $\gon(C)$ of $C$ to be the lowest degree of a non-constant morphism from $C$ to the projective line, then $\mathrm{Cliff}(C)\geq 2$ if and only if $\gon(C)\geq 4$ and $C$ is not a planar quintic.

\begin{Lem}[{\cite{re:multiplication,mercat:clifford-theorem}}]\label{lem:mu/2-bound}
Let $C$ be a smooth projective curve of genus $g\geq 4$. Then we have
    \[\bn_C\leq \frac{g-1}{2}+1-\frac{\min\{\mathrm{Cliff}(C),2\}}{2}.\]
\end{Lem}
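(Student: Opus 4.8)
The plan is to bound $h^0(E)/\rk(E)$ for a stable bundle $E$ on $C$ with $\mu(E)$ close to $g-1$, improving on the Xiao bound of Lemma \ref{lem:weak-bound} by exactly $\tfrac12\min\{\mathrm{Cliff}(C),2\}$. First I would reduce to the case $\mu(E)=g-1$ using Serre duality (Remark \ref{rmk:left-limit}): since $\Psi_{\bn,C}(g-1+t)-\Psi_{\bn,C}(g-1-t)=t$, it suffices to control $\sup\{h^0(E)/\rk(E)\}$ over stable $E$ of slope exactly $g-1$ (and then take limits; the $t$-correction vanishes as $t\to 0$). For such $E$ one has $\chi(E)=0$, so $h^0(E)=h^1(E)=\mathrm{hom}(E,\omega_C)$. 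The cited results of Re \cite{re:multiplication} and Mercat \cite{mercat:clifford-theorem} are precisely Clifford-type theorems for semistable bundles: they give $h^0(E)\le \rk(E)\cdot\bigl(\tfrac{\mu(E)}{2}+1-\tfrac{\mathrm{Cliff}(C)}{2}\bigr)$ when $\mathrm{Cliff}(C)\le 2$ (or more precisely a bound governed by $\min\{\mathrm{Cliff}(C),2\}$), under the hypothesis that $0\le \mu(E)\le 2g-2$. Plugging $\mu(E)=g-1$ gives exactly $h^0(E)/\rk(E)\le \tfrac{g-1}{2}+1-\tfrac{\min\{\mathrm{Cliff}(C),2\}}{2}$.

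Concretely I would argue as follows. Take a sequence of stable bundles $E_n$ with $\mu(E_n)\to g-1$ realizing the $\limsup$ defining $\bn_C$. Splitting into the cases $\mu(E_n)\le g-1$ and $\mu(E_n)\ge g-1$ and applying Serre duality to reduce to slopes in $[g-1,2g-2)$ if necessary, I apply the Clifford bound for semistable bundles: for $E$ semistable with $0<\mu(E)<2g-2$ and $\mathrm{Cliff}(C)\ge 1$, one has $h^0(E)\le \rk(E)\bigl(\tfrac{\deg E}{2\rk E}+1\bigr)-\tfrac{\rk(E)}{2}\min\{\mathrm{Cliff}(C),2\}$; this is the content of \cite{re:multiplication, mercat:clifford-theorem}. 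Taking $\mu(E)\to g-1$ and $\rk(E)$ arbitrary, the right-hand side tends to $\rk(E)\bigl(\tfrac{g-1}{2}+1-\tfrac{\min\{\mathrm{Cliff}(C),2\}}{2}\bigr)$, which is the claimed bound on $\bn_C$.

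I expect the main subtlety to be bookkeeping rather than substance: one must make sure the Clifford inequality from \cite{re:multiplication, mercat:clifford-theorem} is being applied in the correct slope range and to semistable (not just stable) bundles — stable suffices for the definition of $\bn_C$, so this is harmless, but the HN-factor reductions used elsewhere in the paper (as in Propositions \ref{prop:curve-to-surface-smooth} and \ref{prop:curve-to-surface}) implicitly need the semistable version. A second point to check is the edge behavior: the bound should degrade continuously in $\mu(E)$, and one needs the version of the Clifford theorem that is linear in $\mu$ near $g-1$ so that passing to the limit $\mu\to g-1$ is legitimate and the $t$-shift from Serre duality in Remark \ref{rmk:left-limit} does not leak into the constant. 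Granting the cited theorems in the form ``$h^0(E)\le \rk(E)(\mu(E)/2+1) - \tfrac{\rk(E)}{2}\min\{\mathrm{Cliff}(C),2\}$ for semistable $E$ with $0\le\mu(E)\le 2g-2$'', the proof is then immediate by setting $\mu(E)=g-1$ and dividing by $\rk(E)$.
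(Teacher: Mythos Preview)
Your proposal is correct and matches the paper's approach: the paper does not give an independent proof of this lemma but simply records it as a citation of \cite{re:multiplication,mercat:clifford-theorem}, and you have correctly identified how those Clifford-type bounds for semistable bundles, evaluated at slope $g-1$, yield the stated inequality for $\bn_C$. The continuity and Serre-duality bookkeeping you flag is exactly what is needed to pass from the cited linear bound in $\mu$ to the value of $\bn_C$.
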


In practice, the Castelnuovo--Severi inequality is useful for computing the gonality.

\begin{Lem}[{\cite[Theorem 3.5]{accola:topics-in-rim-surface}}]\label{lem:cast-severi}
Let $f\colon C\to C_1$ and $g\colon C\to C_2$ be non-constant morphisms between smooth projective curves. Assume that there does not exist a morphism $h\colon C\to C'$ such that $f$ and $g$ both factor through $h$, then
\[g(C)\leq g(C_1)\deg f+g(C_2)\deg g+(\deg f-1)(\deg g-1).\]
\end{Lem}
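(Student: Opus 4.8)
The plan is to realize $C$ (up to normalization) as a curve on the surface $S \coloneqq C_1\times C_2$ and then combine adjunction with the Hodge index theorem.

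\emph{Reduction to a curve on a product.} Consider $\phi \coloneqq (f,g)\colon C\to S$ and let $D\coloneqq \phi(C)$ with its reduced structure; since $f$ is non-constant, $D$ is an integral projective curve. As $C$ is smooth (hence normal) and $\phi$ is dominant onto $D$, the map $\phi$ factors as $C\xrightarrow{\psi}\widetilde{D}\xrightarrow{\nu}D$, where $\nu$ is the normalization. If $\deg\psi\geq 2$, then with $p_i\colon S\to C_i$ the projections, both $f=p_1\circ\nu\circ\psi$ and $g=p_2\circ\nu\circ\psi$ factor through $\psi\colon C\to\widetilde{D}$, contradicting the hypothesis (which is of course to be understood as forbidding such an $h$ of degree $\geq 2$). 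Hence $\phi\colon C\to D$ is birational, so $C=\widetilde{D}$ and in particular $g(C)\leq p_a(D)$.

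\emph{Intersection numbers on $S$.} Let $F_1=\{x\}\times C_2$ and $F_2=C_1\times\{y\}$ be the two fiber classes, so $F_1^2=F_2^2=0$ and $F_1.F_2=1$. Since $\phi$ is birational onto $D$ and $p_1|_D, p_2|_D$ have degrees $\deg f$ and $\deg g$, we get $D.F_1=\deg f$ and $D.F_2=\deg g$. Using $K_S=p_1^*K_{C_1}+p_2^*K_{C_2}$, numerically $K_S\equiv(2g(C_1)-2)F_1+(2g(C_2)-2)F_2$, so
\[
K_S.D=(2g(C_1)-2)\deg f+(2g(C_2)-2)\deg g.
\]
Next, the intersection form on $N^1(S)_{\mathbb{R}}$ has exactly one positive eigenvalue, and $(F_1+F_2)^2=2>0$; hence the Gram matrix of $F_1,F_2,D$, namely $\left(\begin{smallmatrix}0&1&\deg f\\ 1&0&\deg g\\ \deg f&\deg g&D^2\end{smallmatrix}\right)$, has at most one positive eigenvalue and therefore determinant $\geq 0$ (it is $0$ if the three classes are dependent, and equals $(+)(-)(-)>0$ otherwise). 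Expanding the determinant gives $2\deg f\deg g-D^2\geq 0$, i.e. $D^2\leq 2\deg f\deg g$.

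\emph{Conclusion.} By adjunction on the smooth surface $S$, $2p_a(D)-2=D^2+K_S.D$. Substituting the two bounds above and using $g(C)\leq p_a(D)$ yields
\[
g(C)\leq 1+\deg f\deg g+(g(C_1)-1)\deg f+(g(C_2)-1)\deg g
= g(C_1)\deg f+g(C_2)\deg g+(\deg f-1)(\deg g-1),
\]
as claimed. The only genuinely delicate point is the first step: translating the ``no common factor'' hypothesis into the birationality of $\phi$ onto its image via the universal property of normalization; the remainder is the standard adjunction-plus-Hodge-index bookkeeping.
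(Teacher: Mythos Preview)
Your argument is correct and is essentially the classical proof of the Castelnuovo--Severi inequality (embed in the product $C_1\times C_2$, use adjunction together with the Hodge index bound $D^2\le 2\deg f\cdot\deg g$). The paper itself does not prove this lemma at all: it is stated with a direct citation to \cite[Theorem~3.5]{accola:topics-in-rim-surface} and used as a black box in the proof of Theorem~\ref{thm:double-cover}. One minor remark: your parenthetical ``it is $0$ if the three classes are dependent, and equals $(+)(-)(-)>0$ otherwise'' is slightly imprecise (the restricted form could in principle be degenerate even on an independent triple), but your actual argument---ruling out the negative-definite case via $(F_1+F_2)^2=2>0$---is the right one and gives $\det\geq 0$ in all cases.
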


\subsection{Bounds on $\bn_C$ via wall-crossing}\label{subsec:bn-wall-cross}

Next, we deal with the case when $C$ can be embedded into a del Pezzo or K3 surface. This method is based on the wall-crossing argument used in \cite{bayer:brill-noether,feyz:mukai-program,feyz-li:clifford-indices}.

Let $S$ be a normal projective surface with rational Gorenstein singularities with an ample divisor $H$. Then the notion of Chern characters and tilt-stability on $S$ behaves well as discussed in Section \ref{sec:pre}. We fix an integer $s \geq 1$ and an integral curve $C \in |sH|$ of arithmetic genus $g$, which by the adjunction formula satisfies
\[
2(g-1) = s^2 H^2 + s H. K_S.
\]
Write the associated embedding by $\iota\colon C\hookrightarrow S$. Fix a coherent sheaf $E$ on $C$ of rank $r$ and degree $d$. Then Lemma \ref{lem:property-ch}(a) implies
\[\ch(\iota_*E)=\left(0,rsH,d-r\frac{s^2H^2}{2}\right)\]
and so 
$$\nu_{BN}(\iota_*E) = \frac{\mu(E)}{sH^2} - \frac{s}{2}. $$

The following result will be used frequently.

\begin{Prop}\label{prop:bn-bound}
Using the above notation, for any stable sheaf $E$ on $C$, there exists $\delta > 0$ such that the HN filtration of 
$\iota_*E$ with respect to $\nu_{0,w}$ is a fixed sequence
\[
0 = F_0 \subset F_1 \subset \cdots \subset F_k = \iota_*E
\]
for all $0 < w < \delta$. In particular, each factor 
$F_{j+1}/F_j \in \Coh^0(S)$ is BN-semistable, and we have
\[
\nu_{BN}^+(\iota_*E) \coloneqq \nu_{BN}(F_1) 
    \leq \frac{2\,\nu_{BN}(\iota_*E) + s}{4},
    \quad
\nu_{BN}^-(\iota_*E) \coloneqq \nu_{BN}(\iota_*E/F_{k-1})
    \geq \frac{2\,\nu_{BN}(\iota_*E) - s}{4}.
\]
\end{Prop}

\begin{proof}
By the stability of $E$, we know $\iota_*E\in \Coh^0(S)$ is $\nu_{0,w}$-stable for any $w\gg 0$. We may assume that $\iota_*E$ is not BN-semistable. It is proved in \cite[Proposition 3.4(a)]{feyz:mukai-program} that the HN filtration of $\iota_*E$ with respect to $\nu_{0,w}$ is independent of $w$ for $0<w<\delta$. Hence, it remains to prove the bounds on $\nu_{BN}^{\pm}(\iota_*E)$. 

Let $E_2 \to \iota_*E \to E_1$ be the destabilizing sequence along the first wall $\ell$ that we hit for $\iota_*E$ when we move down from $w \gg 0$ along the vertical line $b=0$. Suppose $\ell$ intersects $w=\frac{1}{2}b^2$ at $(b_1,\frac{1}{2}b_1^2)$ and $(b_2,\frac{1}{2}b_2^2)$, where $b_1<0<b_2$. We show that
\begin{equation}\label{eq-1}
b_2-b_1 \leq s,    
\end{equation}
and so the line $\ell$ lies on or below the line connecting the points
\[
\left(\nu_{\bn}(\iota_*E) - \frac{s}{2}, \ \frac{1}{2}\bigl(\nu_{\bn}(\iota_*E) - \frac{s}{2}\bigr)^2\right)
\]
and
\[
\left(\nu_{\bn}(\iota_*E) + \frac{s}{2}, \ \frac{1}{2}\bigl(\nu_{\bn}(\iota_*E) + \frac{s}{2}\bigr)^2\right)
\]
on the parabola $w=\frac{1}{2}b^2$, and then the bounds on $\nu_{BN}^{\pm}(\iota_*E)$ follow from \cite[Lemma 3.4]{feyz-li:clifford-indices} by replacing the curve $\Gamma$ there with the parabola $w=\frac{1}{2}b^2$.

To prove \eqref{eq-1}, we take the cohomology of the destabilizing sequence and obtain the long exact sequence
\[0\to \cH^{-1}(E_1)\to E_2\to \iota_*E\to \cH^0(E_1)\to 0.\]
Therefore, we can assume that $\rk(E_2)=\rk(\cH^{-1}(E_1))=l$ for $l\geq 0$. Note that $l>0$, otherwise $\cH^{-1}(E_1)=0$ as it is torsion-free, and $E_2$ is a subsheaf of $\iota_*E$ which cannot make a wall for $\iota_*E$ by the stability of $E$.

Since $\cH^{-1}(E_1)$ is torsion-free, we have $T(E_2)\subset \iota_*E$, where $T(E_2)$ is the torsion part of $E_2$. Hence, we may assume that $\ch_1(T(E_2))=asH$ for some $a\in \Z_{\geq 0}$. We also know $\cH^0(E_1)$ is supported on $C$, so $\ch_1(\cH^0(E_1))=csH$ for some $c\in \Z_{\geq 0}$. 
Hence 
\begin{align*}
    r-c=\rk(E)-\rk(\iota^*\cH^0(E_1))\leq & \rk(\iota^*E_2)\\
    \leq & \rk(\iota^*(E_2/T(E_2)))+\rk(\iota^*T(E_2))=l+a.
\end{align*}
As $$\ch_1(E_2)-\ch_1(\cH^{-1}(E_1))=\ch_1(\iota_*E)-\ch_1(\cH^0(E_1))=(r-c)sH,$$
we get
\begin{equation*}
    b_2-b_1 \leq \mu_H(E_2/T(E_2))-\mu_H(\cH^{-1}(E_1))=\frac{(r-c)sH^2-asH^2}{lH^2}\leq s,
\end{equation*}
where the first inequality comes from Lemma \ref{locally finite set of walls} and the definition of $\Coh^b(S)$ as we move along the line $\ell$. This completes the proof of \eqref{eq-1}.
\end{proof}

Once we have good control of $h^0$ of BN-semistable objects on $S$, we can bound $\bn_C$ effectively using the above result. In the following, we will realize this strategy for del Pezzo and K3 surfaces.

\subsubsection{Del Pezzo surfaces}

We first assume that $S$ is a del Pezzo surface with rational Gorenstein singularities and $H=-K_S$. Using Kodaira vanishing, we have $\chi(\cO_S)=1$, which together with Lemma \ref{lem:property-ch}(a) gives 
\begin{equation}\label{eq-dp-hrr}
    \chi(F)=\rk(F)+\frac{\ch_1(F).H}{2}+\ch_2(F)
\end{equation}
for any $F\in \mathrm{D^b}(S)$. By the adjunction formula and setting $m\coloneqq H^2$, we get
\[2g-2=s(s-1)m.\]

We define a function
\[\Psi(x,y)\coloneqq\begin{cases}
     \frac{1}{2}y+x, & \text{if }-\frac{1}{2}<\frac{x}{y}<+\infty \\
      \frac{1}{mn}y, & \text{if }\frac{x}{y}=-\frac{n}{2} \text{ for }n\in \mathbb{Z}_{>0} \\
      \frac{(2n+1)}{(n^2+n)m+2}y+\frac{2}{(n^2+n)m+2}x, & \text{if }\frac{x}{y}\in (-\frac{n+1}{2},-\frac{n}{2})\text{ for }n\in \mathbb{Z}_{>0}.
      \end{cases}\]

\begin{Rem}\label{rem:Psi}
A simple computation confirms that for $y_0>0$ and $n \in \mathbb{Z}_{>0}$, we have 
\[
\max\left\{\lim_{\substack{ (x, y) \to \left(-\frac{n}{2}y_0,\, y_0\right) \\ \frac{x}{y} > -\frac{n}{2} }}\Psi(x,y) \ , \ \lim_{\substack{ (x, y) \to \left(-\frac{n}{2}y_0,\, y_0\right) \\ \frac{x}{y} < -\frac{n}{2} }}\Psi(x,y) \right\}
\le
\Psi\!\left(-\frac{n}{2}y_0,\, y_0\right).
\]
\end{Rem}

Using \eqref{eq-dp-hrr} and the same argument as in \cite[Lemma 4.8]{chunyi:stability-condition-quintic-threefold}, we have:

\begin{Lem}\label{lem:h^0-bn-dp}
Let $E\in \Coh^0(S)$ be a BN-semistable object. Then we have
\[h^0(F)\leq \rk(F)+\Psi\big(\ch_2(F),\ch_1(F).H\big).\]
\end{Lem}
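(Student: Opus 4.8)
The plan is to follow the proof of \cite[Lemma 4.8]{chunyi:stability-condition-quintic-threefold}, using the del Pezzo Hirzebruch--Riemann--Roch identity \eqref{eq-dp-hrr} in place of its quintic-surface analogue, and invoking Section \ref{sec:pre} to run tilt-stability on the possibly singular surface $S$.

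First I would dispose of two easy ranges. If $\ch_1(E).H=0$, then $E$ is supported in dimension $0$, so $h^0(E)=\chi(E)=\ch_2(E)=\rk(E)+\Psi(\ch_2(E),\ch_1(E).H)$; thus we may assume $\ch_1(E).H>0$ and set $\nu\coloneqq\nu_{BN}(E)=\ch_2(E)/(\ch_1(E).H)$. If $\nu>-\tfrac12$, I would show $h^1(E)=h^2(E)=0$: since $\cO_S(-H)[1]\in\Coh^0(S)$ and $E\in\Coh^0(S)$, Gorenstein Serre duality gives $h^2(E)=\mathrm{hom}(E,\cO_S(-H))=0$ (a negative $\Ext$ inside the heart $\Coh^0(S)$) and $h^1(E)=\mathrm{hom}(E,\cO_S(-H)[1])$; but $\cO_S(-H)[1]$ is a shift of a line bundle with $\Delta_H=0$, hence tilt-stable everywhere, in particular BN-semistable of slope $-\tfrac12$, so there is no nonzero map from the BN-semistable $E$ of strictly larger slope, whence $h^1(E)=0$. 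Then \eqref{eq-dp-hrr} yields $h^0(E)=\chi(E)=\rk(E)+\tfrac12\ch_1(E).H+\ch_2(E)$, which is exactly the first branch of $\Psi$.

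For the remaining range $\nu\le-\tfrac12$ I would argue by induction on a suitable invariant (essentially $\ch_1(E).H$). If $h^0(E)=0$, the assertion becomes $\rk(E)+\Psi(\ch_2(E),\ch_1(E).H)\ge 0$, which I would read off from the fact that $\Pi(E)$ lies on or below $\overline{\Gamma}$ (valid for any $\nu_{0,w}$-semistable object by the previous Lemma) together with an elementary comparison of $\overline{\Gamma}$ with the piecewise-linear graph defining $\Psi$. If $h^0(E)>0$, the sections of $E$ --- combined, when $\Pi(E)$ sits near a vertex $(-j,\tfrac{j^2}{2})$ of $\overline{\Gamma}$, with the tilt-stable objects $\cO_S(-jH)[1]$ --- produce, via an evaluation/universal-extension construction and wall-crossing in $\Coh^b(S)$ near $(b,w)=(0,0)$, a short exact sequence in $\Coh^0(S)$ whose two outer terms are again BN-semistable but have strictly smaller $\ch_1(-).H$; applying the inductive hypothesis to each and adding gives the bound for $E$. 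The three branches of $\Psi$ are precisely what makes this summation close: $\rk(-)+\Psi(\ch_2(-),\ch_1(-).H)$ is calibrated against the extremal BN-semistable classes governing the walls of $\overline{\Gamma}$, namely the $\cO_S(-jH)[1]$ (responsible for the value $\tfrac1{jm}\ch_1(-).H$ of $\Psi$ at $\nu=-j/2$) and the pushforwards $\iota_*L$ of line bundles from integral curves $C\in|jH|$ (responsible for the linear branches, whose denominators $(j^2+j)m+2=2g(C)$ come from adjunction).

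The main difficulty, just as in \cite{chunyi:stability-condition-quintic-threefold}, is this last bookkeeping: one must verify that the claimed short exact sequence exists inside $\Coh^0(S)$, that both outer terms stay BN-semistable, that $\ch_1(-).H$ drops strictly, and --- the part genuinely different from the quintic case --- that the constants coming out of \eqref{eq-dp-hrr}, where now $\chi(\cO_S)=1$ and $\td_1(S)=\tfrac12 H$, reproduce each branch of $\Psi$ and the quantities $2g(C)$ in its denominators. One must also handle with care objects of negative rank whose class $\Pi$ lies close to a vertex of $\overline{\Gamma}$, where the plain Bogomolov--Gieseker inequality $\Delta_H(-)\ge 0$ is not quite enough and the finer statement of the previous Lemma is needed. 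The singularities of $S$ themselves cause no trouble, since by Section \ref{sec:pre} the heart $\Coh^b(S)$, the wall-and-chamber structure, and the inequality $\Delta_H(-)\ge 0$ are all available for normal del Pezzo surfaces with rational Gorenstein singularities.
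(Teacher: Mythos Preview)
Your proposal is correct and matches the paper's own argument: the paper simply invokes \eqref{eq-dp-hrr} together with the proof of \cite[Lemma~4.8]{chunyi:stability-condition-quintic-threefold}, which is exactly the strategy you outline (easy ranges via Serre duality and BN-slope comparison, then induction via wall-crossing for $\nu\le -\tfrac12$). Your expanded sketch, including the care taken with singularities of $S$ using Section~\ref{sec:pre}, is a faithful unpacking of that reference.
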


Similarly, we get:

\begin{Lem}[{\cite[Lemma 4.11]{chunyi:stability-condition-quintic-threefold}}]\label{lem:convex-dp}
Let $O=(0,0)$ be the origin, and let $P=(x_p,y_p)$ and $Q=(x_q,y_q)$ be two points in the upper half-plane such that
\[
\frac{x_p}{y_p} < \frac{x_q}{y_q}
\quad\text{and}\quad
y_p > y_q.
\]
Consider the collection of all convex polygons given by a sequence of points
\[
P_0=O,\, P_1,\, \dots,\, P_n=P
\]
in the upper half-plane and inside the triangle $OPQ$. The maximal value of
\[
\sum_{i=0}^{n-1} \Psi\!\left(\overrightarrow{P_i P_{i+1}}\right)
\]
is attained only by polygons with $n=1$ or $n=2$. Moreover, when $n=2$, the point
$P_1=(x_1,y_1)$ lies on the line segment $OQ$ (resp.~$QP$) unless
\[
\frac{x_1}{y_1}=-\frac{n}{2}
\quad\text{(resp.~}\frac{x_p-x_1}{y_p-y_1}=-\frac{n}{2}\text{)}
\]
for some $n\in\mathbb{Z}_{>0}$.
\end{Lem}

Now, we can prove a bound on $\bn_C$.

\begin{Prop}\label{prop:delpezzo}
When $s$ is odd, we have
\[\bn_C\leq \max\left\{1+\frac{1}{8}H^2(s^2-1),\ s\right\}.\]
\end{Prop}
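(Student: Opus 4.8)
The strategy follows the wall-crossing method of \cite{chunyi:stability-condition-quintic-threefold}, now exploiting that the slopes of the sheaves in question are confined near $g-1$. If $s=1$ then $g=1$, and $\bn_C\le1$ already follows from Lemma \ref{lem:weak-bound}, so assume $s\ge3$. Since $C$ is integral with dualizing sheaf $\omega_C\cong\cO_S((s-1)H)|_C$ a line bundle, $C$ is Gorenstein, so by Remark \ref{rmk:left-limit} it suffices to fix $0<\epsilon\ll1$, bound $h^0(E)/\rk(E)$ for every stable sheaf $E$ on $C$ with $\mu(E)\in[\tfrac{s-1}{2}sm-\epsilon,\tfrac{s-1}{2}sm]$, and then let $\epsilon\to0^+$; for such $E$ one has $r\coloneqq\rk(E)>0$, $\nu_{BN}(\iota_*E)=t(E)-\tfrac s2\in[-\tfrac12-\tfrac{\epsilon}{sm},-\tfrac12]$, and $\ch(\iota_*E)=(0,rsH,rsm\,\nu_{BN}(\iota_*E))$.

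Fix such an $E$ and take the HN filtration $0=F_0\subset F_1\subset\cdots\subset F_k=\iota_*E$ with respect to $\nu_{BN}$ from Lemma \ref{lem:HN-BN}, with BN-semistable factors $G_j\coloneqq F_j/F_{j-1}$; each satisfies $\ch_1(G_j).H>0$ (for $G_1=F_1$ because $\rk(F_1)>0$ as shown in the proof of Lemma \ref{lem:bound-slope-bn-dp}, and for $j\ge2$ because the slopes $\nu_{BN}(G_j)$ are finite and decreasing). As $\sum_j\ch_0(G_j)=\ch_0(\iota_*E)=0$, Lemma \ref{lem:h^0-bn-dp} gives
\[
h^0(E)=h^0(\iota_*E)\ \le\ \sum_{j=1}^k h^0(G_j)\ \le\ \sum_{j=1}^k\Psi\big(\ch_2(G_j),\,\ch_1(G_j).H\big).
\]
The points $P_j\coloneqq(\ch_2(F_j),\ch_1(F_j).H)$ form a convex polygon from $P_0=O$ to $P_k=P\coloneqq(\ch_2(\iota_*E),rsm)$ (convex because the edge slopes $\nu_{BN}(G_j)$ decrease, with all second coordinates positive), and the right-hand side above is $\sum_j\Psi(\overrightarrow{P_{j-1}P_j})$. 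Set $\nu^+\coloneqq-\tfrac{3s+1}{4}+\tfrac{2s}{s-1}t(E)$ and $\nu^-\coloneqq-\tfrac{s+1}{4}$. If $k\ge2$, Lemma \ref{lem:bound-slope-bn-dp} (applicable since $s$ is odd) gives $\nu_{BN}(G_1)\le\nu^+$ and $\nu_{BN}(G_k)\ge\nu^-$, so concavity of the polygon confines it to the triangle $OPQ$, where $Q$ is the intersection of the line through $O$ of slope $\nu^+$ with the line through $P$ of slope $\nu^-$.

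By Lemma \ref{lem:convex-dp}, $\sum_j\Psi(\overrightarrow{P_{j-1}P_j})$ is bounded by the same sum for a polygon with at most two edges, the middle vertex (if present) lying on $OQ\cup QP$ or at one of the finitely many points where an edge has slope $-\tfrac n2$, $n\in\Z_{>0}$. One edge gives $\Psi(\ch_2(\iota_*E),rsm)$, which from $\nu_{BN}(\iota_*E)\in[-\tfrac12-\tfrac{\epsilon}{sm},-\tfrac12]$ equals $rs$ when $\nu_{BN}(\iota_*E)=-\tfrac12$ and is less than $\tfrac{sm}{m+1}r<rs$ otherwise, contributing at most $s$ to $\bn_C$. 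The decisive two-edge case is $P_1=Q$: the edge $\overrightarrow{OP_1}$ has slope $\nu^+$, which $\to\tfrac{s-1}{4}>-\tfrac12$ as $\epsilon\to0$, so $\Psi$ on it is $(\tfrac12+\nu^+)\,\ch_1(F_1).H$, while $\overrightarrow{P_1P}$ has slope $\nu^-=-\tfrac{s+1}{4}$, which \emph{because $s$ is odd} equals $-\tfrac n2$ with $n=\tfrac{s+1}{2}\in\Z_{>0}$, so $\Psi$ takes the larger endpoint value $\tfrac1{nm}\,\ch_1(\iota_*E/F_1).H$ there. Solving the two coordinate equations for $Q$ and letting $\epsilon\to0$ (so $\nu^+\to\tfrac{s-1}{4}$ and $\nu_{BN}(\iota_*E)\to-\tfrac12$) yields $\ch_1(F_1).H\to\tfrac{(s-1)m}{2}r$ and $\ch_1(\iota_*E/F_1).H\to\tfrac{(s+1)m}{2}r$, hence in the limit
\[
h^0(E)\ \le\ \bigl(\tfrac12+\tfrac{s-1}{4}\bigr)\tfrac{(s-1)m}{2}r+\tfrac1{nm}\cdot\tfrac{(s+1)m}{2}r\ =\ \bigl(1+\tfrac{(s^2-1)m}{8}\bigr)r.
\]
The remaining two-edge configurations --- intermediate jump slopes $-\tfrac{n'}2$ with $n'<n$, or $P_1$ on $OQ\cup QP$ with no jump --- are handled by the same kind of elementary estimate and give at most $\max\{s,\,1+\tfrac{(s^2-1)m}{8}\}r$. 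Letting $\epsilon\to0^+$ and recalling $m=H^2$ gives the stated bound. The bulk of the work is precisely this last bookkeeping: enumerating the finitely many extremal polygons produced by Lemma \ref{lem:convex-dp} and evaluating the piecewise-linear $\Psi$ on each, exactly as in \cite[Section~4]{chunyi:stability-condition-quintic-threefold}.
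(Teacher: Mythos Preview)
Your approach is exactly the paper's: take the HN filtration of $\iota_*E$ with respect to $\nu_{BN}$, apply Lemma \ref{lem:h^0-bn-dp} and Lemma \ref{lem:bound-slope-bn-dp} to confine the HN polygon to the triangle $OQP$, reduce via Lemma \ref{lem:convex-dp} to at most two edges, and evaluate $\Psi$ on each extremal configuration. Your BN-semistable (one-edge) case and the $P_1=Q$ case are computed correctly and match the paper.

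The one place where you are incomplete, and where the paper is more careful, is the sentence ``the remaining two-edge configurations \ldots\ are handled by the same kind of elementary estimate.'' The paper actually enumerates these: (i) $\tfrac{x_1}{y_1}=-\tfrac12$ with $\tfrac{x_p-x_1}{y_p-y_1}=-\tfrac n2$ for $2\le n\le\tfrac{s+1}{2}$, giving exactly $s$ at $\mu(E)=\tfrac{s-1}{2}sm$; and (ii) $P_1$ on $OQ$ with $\tfrac{x_p-x_1}{y_p-y_1}=-\tfrac n2$, giving a value $sm\,f(n)$ which the paper shows is \emph{increasing} in $n$ on $[2,\tfrac{s+1}{2}]$ via a derivative computation, so that its maximum $f(\tfrac{s+1}{2})=1+\tfrac{(s^2-1)m}{8}$ coincides with the $P_1=Q$ bound. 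This monotonicity step is short but not formal, so your deferral to \cite[Section 4]{chunyi:stability-condition-quintic-threefold} should be replaced by an explicit check.
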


\begin{proof}
We may assume that $s\geq 3$, as $s=1$ implies $g(C)=1$ and the result follows from Lemma \ref{lem:weak-bound}. Let $E$ be a stable sheaf on $C$ of rank $r$ and degree $d$ with slope 
\begin{equation}\label{mu}
\mu(E)\in \left[\frac{1}{2}s(s-1)m-\epsilon,\ \frac{1}{2}s(s-1)m\right]    
\end{equation}
for $0<\epsilon\ll 1$. 
If $\iota_*E$ is BN-semistable, then Lemma \ref{lem:h^0-bn-dp} directly implies $h^0(E)\leq rs$. Now, assume that $\iota_*E$ is not BN-semistable. Let $$0=F_0\subset F_1\subset F_2\subset \cdots \subset F_k=\iota_*E$$ be the filtration of $\iota_*E$ as in Proposition \ref{prop:bn-bound}. Let $$P_i\coloneqq (\ch_2(F_i), \ch_1(F_i).H)$$ be the corresponding points in the upper half plane. By Lemma \ref{lem:h^0-bn-dp}, we have
\[h^0(E)\leq \sum^k_{i=1}h^0(F_i/F_{i-1})\leq \sum^k_{i=1} \Psi(\overrightarrow{P_{i-1} P_{i}}).\]
We set $$P=(x_p,y_p)\coloneqq P_k=\left(d-\frac{1}{2}s^2rm,\, rsm\right)$$ and $$Q\coloneqq(x_q,y_q)=\left(\frac{\mu(E)d}{2s^2m},\, \frac{d}{s}\right).$$
So we have
$$\frac{x_q}{y_q}=\frac{\mu(E)}{2sm} \quad \text{and} \quad 
\frac{x_q-x_p}{y_q-y_p}=\frac{\mu(E)-s^2m}{2sm}.$$ 
By Proposition \ref{prop:bn-bound}, the points $O,P_1,\dots,P_k,O$ form the vertices of a convex polygon in the triangle $OQP$. Using Lemma \ref{lem:convex-dp}, to bound $h^0(E)$, we only need to bound
\[\Psi(\overrightarrow{O P_{1}})+\Psi(\overrightarrow{P_{1} P})\]
by choosing suitable point $P_1=(x_1,y_1)$ in the triangle $OQP$. Note that when $\frac{x_1}{y_1}=-\frac{n}{2}$ for $n\in \Z_{>0}$, by Proposition \ref{prop:bn-bound}, we have $n=1$. Similarly, when $\frac{x_p-x_1}{y_p-y_1}=-\frac{n}{2}$ for $n\in \Z_{>0}$, by Proposition \ref{prop:bn-bound}, we have $1\leq n\leq \frac{s+1}{2}$. So the argument can be divided into the following cases.

\begin{itemize}
    \item When $\frac{x_p-x_1}{y_p-y_1}=-\frac{1}{2}$, then we have $\mu(E)=\frac{1}{2}s(s-1)m$ and a direct computation gives
\[\frac{h^0(E)}{r}\leq \frac{1}{r}\big(\Psi(\overrightarrow{O P_1})+\Psi(\overrightarrow{P_1 P})\big)=\frac{1}{r}\left(\frac{1}{m}y_1+\frac{1}{m}(y_p-y_1)\right)=s.\]

    \item Suppose $P_1=Q$. Then, by Remark \ref{rem:Psi} and the slope range in \eqref{mu}, we have
    \begin{align*}
        \frac{h^0(E)}{r} 
\le 
\frac{1}{r} \bigl( \Psi(\overrightarrow{OQ}) + \Psi(\overrightarrow{QP}) \bigr) 
\le \,& 
\frac{1}{r} \Biggl( \frac{1}{2} y_q + x_q 
+ \frac{2}{m(s+1)} (y_p - y_q) \Biggr) 
\Biggr|_{\mu(E) = \frac{1}{2}s(s-1)m}\\
= \,& 1+\frac{1}{8}m(s^2-1). 
    \end{align*}

\item When $\frac{x_1}{y_1}=-\frac{1}{2}$ and $\frac{x_p-x_1}{y_p-y_1}=-\frac{n}{2}$ for $2\leq n\leq \frac{s+1}{2}$, we have
   $$
   y_1 =  -r\frac{sm(s-n)-2\mu(E)}{n-1}. 
   $$
Then we get
    \[\frac{h^0(E)}{r}\leq \frac{1}{r}\big(\Psi(\overrightarrow{O P_1})+\Psi(\overrightarrow{P_1 P})\big)=\frac{1}{r}\left(\frac{1}{m}y_1+\frac{1}{mn}(y_p-y_1)\right)\]
For any fixed $n$, this bound is a linear function in $\mu(E)$. Moreover, its value at $\mu(E)=\frac{1}{2}s(s-1)m$ always equals $s$.

\item When $\frac{x_1}{y_1} = -\frac{1}{2}$ and $P_1$ is on $PQ$, then, again by Remark \ref{rem:Psi} and the slope range in \eqref{mu}, we have
\[
\frac{h^0(E)}{r} 
\le 
\frac{1}{r} \bigl( \Psi(\overrightarrow{O P_1}) + \Psi(\overrightarrow{P_1 P}) \bigr) 
\le 
\frac{1}{r} \Biggl( \frac{1}{m} y_1 + \frac{2}{m(s+1)} (y_p - y_1) \Biggr) 
\Biggr|_{\mu(E) = \frac{1}{2}s(s-1)m} =s
\]
where the last equality follows from the previous part.


\item When $\frac{x_p-x_1}{y_p-y_1}=-\frac{n}{2}$ for $2\leq n\leq \frac{s+1}{2}$ and $P_1$ is on $OQ$, we have
$$y_1 = \frac{2\mu(E)+sm(n-s)}{\mu(E)+smn}smr. $$
Then we obtain
\[
\frac{h^0(E)}{r}
\le
\frac{1}{r}\bigl(\Psi(\overrightarrow{O P_1})+\Psi(\overrightarrow{P_1 P})\bigr)
=
\frac{1}{r}\left(
\frac{1}{2}y_1
+\frac{\mu(E)}{2sm}y_1
+\frac{1}{mn}(y_p-y_1)
\right),
\]
and this bound is clearly a rational function of $\mu(E)$. Its value at $\mu(E)=\frac{s(s-1)m}{2}$ equals to
$$
 \frac{2sm(n-1)}{s+2n-1} \left(\frac{s+1}{4} -\frac{1}{mn}\right)+ \frac{s}{n}. 
$$
This is an increasing function of $n$ for $n\in [2, \frac{s+1}{2}]$ when $s\ge 3$, and hence its maximum equals to
$1+\frac{1}{8}m(s^2-1)$. 
\end{itemize}
\end{proof}

The case where $s$ is even can be dealt with analogously. Since we only need the odd case in the later section, we skip this part.

\subsubsection{K3 surfaces}

Now, we discuss the case where $C$ lies in a K3 surface. In the rest of this section, we fix $S$ to be a smooth\footnote{All results can be generalized to K3 surfaces with mild singularities. However, this requires additional effort, and we do not address it in the current paper.} K3 surface and $H$ to be an ample divisor on $S$ such that $H^2$ divides $D.H$ for all divisors $D$ on $S$ (e.g.~$\mathrm{Pic}(S)=\mathbb{Z}H$). Then by setting $m\coloneqq H^2$, we get
\[s^2m=2g-2.\]
For any coherent sheaf $E$ of rank $r$ and degree $d$ on $C$, we have
\[\ch(\iota_*E)=\big(0,rsH, d+r(1-g)\big)=\left(0,\,rsH,\,r\left(\mu(E)-\frac{s^2}{2}m\right)\right).\]
In particular, if $\mu(E)=g-1$, then $\ch(\iota_*E)=(0,rsH, 0).$ We define a function $$\Omega(x,y)\coloneqq\frac{x}{2}+\frac{1}{2}\sqrt{x^2+\frac{2m+4}{m^2}y^2}.$$
Now, we obtain an upper bound for $\bn_C$ via an argument similar to that in Proposition \ref{prop:delpezzo}.

\begin{Prop}\label{prop:K3}
We have
\[\bn_C\leq \frac{s}{8}\sqrt{(2m+8)^2+(s^2-4)m^2}.\]
\end{Prop}

\begin{proof}
Let $E$ be a stable sheaf on $C$ of rank $r$ and degree $d$ with 
\[\mu(E)\in \left[\frac{1}{2}s^2m-\epsilon,\ \frac{1}{2}s^2m\right] \]
for $0<\epsilon\ll 1$. Let $$0=F_0\subset F_1\subset F_2\subset \cdots \subset F_k=\iota_*E$$ be the filtration of $\iota_*E$ as in Proposition \ref{prop:bn-bound}. We set $$P_i=(x_i,y_i)\coloneqq\left(\ch_2(F_i), \ch_1(F_i).H\right)$$ to be the corresponding points in the upper half plane. From \cite[Proposition 3.4(b)]{feyz:mukai-program}, we have
\[h^0(E)\leq \sum^k_{i=1}h^0(F_i/F_{i-1})\leq \sum^k_{i=1} \Omega(\overrightarrow{P_{i-1} P_{i}}).\]
We set $$P=(x_p,y_p)\coloneqq P_k=\left(d-\frac{s^2}{2}rm, rsm\right)$$ and $$Q\coloneqq(x_q,y_q)=\left(\frac{\mu(E)d}{2s^2m},\frac{d}{s}\right)$$
so that $$\frac{x_q}{y_q}=\frac{\mu(E)}{2sm} \quad \text{and} \quad \frac{x_q-x_p}{y_q-y_p}=\frac{\mu(E)-s^2m}{2sm}.$$  Therefore, by Proposition \ref{prop:bn-bound}, the points $O,P_1,\dots,P_k,O$ form the vertices of a convex polygon in the triangle $OQP$. By the triangle inequality, we have
\begin{align*}
\frac{h^0(E)}{r}
&\leq \frac{1}{r}\sum_{i=1}^k \Omega\!\left(\overrightarrow{P_{i-1}P_i}\right) \\
&\leq \frac{\ch_2(\iota_*E)}{2r}
+ \sum_{i=1}^k \frac{1}{2r}
\sqrt{(x_i-x_{i-1})^2 + \frac{2m+4}{m^2}(y_i-y_{i-1})^2} \\
&\leq \mu(E) - \frac{s^2m}{2}
+ \frac{1}{2}\sqrt{\left(\frac{x_q}{r}\right)^2 + \frac{2m+4}{m^2}\left(\frac{y_q}{r}\right)^2}\\
& \ \quad \quad
+ \frac{1}{2}\sqrt{\left(\frac{x_p-x_q}{r}\right)^2 + \frac{2m+4}{m^2}\left(\frac{y_p-y_q}{r}\right)^2}.
\end{align*}
In particular, we get an upper bound of $\frac{1}{r}h^0(E)$ by a continuous function in $\mu(E)$. Now the result for $\bn_C$ follows from a direct computation at $\mu(E)=\frac{1}{2}s^2m=g-1$.
\end{proof}

\subsection{Lower bounds on $\bn_C$}
    \label{subsec:bn-lower-bound}
    We conclude this section by describing some lower bounds on $\bn_C$. Although the results of this subsection are independent of the rest of the article, they indicate that $\bn_C$ captures information fundamentally different from existing invariants (such as the gonality and Clifford index), and is not easy to compute in general.
    

  Our first observation is that the gonality of a curve gives a naive lower bound on $\bn_C$.
  \begin{Lem}
    \label{lemma:gonalityLowerBound}
    If $C$ is an integral projective curve of genus $g$ with a finite morphism of degree $d$ to $\PP^1$, then
    \[
      \bn_C\geq \left\lfloor\frac{g-1}{d}\right\rfloor+1.
    \]
  \end{Lem}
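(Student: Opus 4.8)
The plan is to exhibit, for every $t>0$, a single \emph{stable} rank-one sheaf $L$ on $C$ with $\mu(L)=g-1$ and $h^0(L)\geq\lfloor (g-1)/d\rfloor+1$. Since such an $L$ lies in the family over which the supremum defining $\Psi_{\bn,C}(g-1)$ is taken for every $t$, this immediately yields $\bn_C=\Psi_{\bn,C}(g-1)\geq\lfloor (g-1)/d\rfloor+1$.

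To construct $L$, I would set $m\coloneqq\lfloor (g-1)/d\rfloor$ and $e\coloneqq (g-1)-md\geq 0$, and write $f\colon C\to\PP^1$ for the given finite morphism of degree $d$. Since $\deg f^*\cO_{\PP^1}(1)=d$, the line bundle $N\coloneqq f^*\cO_{\PP^1}(m)$ has degree $md$, and since $f$ is dominant the pullback of sections gives an injection $H^0(\PP^1,\cO_{\PP^1}(m))\hookrightarrow H^0(C,N)$ (a section vanishing on $C$ vanishes on $f(C)=\PP^1$), so $h^0(N)\geq m+1$. Then I would choose an effective Cartier divisor $D$ of degree $e$ supported on the (dense) smooth locus of $C$ and put $L\coloneqq N\otimes\cO_C(D)$; this is a line bundle of degree $md+e=g-1$, hence $\mu(L)=g-1$, and the inclusion $N\hookrightarrow L$ induced by $D\geq 0$ gives $h^0(L)\geq h^0(N)\geq m+1$.

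Finally I would recall that every line bundle on the integral projective curve $C$ is stable: any nonzero proper subsheaf $F\subsetneq L$ is torsion-free of rank one with torsion quotient $L/F$ of some positive length $\ell\geq 1$, so $\chi(F)=\chi(L)-\ell$ and therefore $\mu(F)=\deg F=\chi(F)+(g-1)<\chi(L)+(g-1)=\deg L=\mu(L)$. Thus $L$ is a stable rank-one sheaf with $\mu(L)=g-1$ and $h^0(L)/\rk(L)\geq m+1$, which finishes the argument.

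There is essentially no obstacle here; the only point I would be careful about is choosing $D$ inside the smooth locus, so that $\cO_C(D)$ is genuinely invertible even when $C$ is singular and the one-line stability check applies verbatim. (If one prefers, one may instead twist $N$ by an arbitrary effective Weil divisor of degree $e$, at the cost of allowing $L$ to be a rank-one torsion-free sheaf that need not be locally free.)
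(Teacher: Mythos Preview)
Your proof is correct and follows essentially the same approach as the paper: pull back $\cO_{\PP^1}\bigl(\lfloor (g-1)/d\rfloor\bigr)$ and twist by an effective divisor of the appropriate degree to obtain a line bundle of degree $g-1$ with at least $\lfloor(g-1)/d\rfloor+1$ sections. Your version is slightly more careful about the singular case (choosing $D$ in the smooth locus and spelling out the stability of line bundles), but the argument is the same.
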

  \begin{proof}
    Let $\pi\colon C\to\PP^{1}$ denote the associated degree $d$ cover.
    Consider the line bundle
    \[
      L=\cO_{\PP^{1}}\left(\left\lfloor\frac{g-1}{d}\right\rfloor\right),
    \]
    which satisfies
    \[
      h^{0}(\pi^*L)\geq h^0(L)=\left\lfloor\frac{g-1}{d}\right\rfloor+1.
    \]
    Twisting $\pi^*L$ by $g-1-d\lfloor (g-1)/d\rfloor$ general points gives a line bundle, $L^{\prime}$, with $\deg(L^{\prime})=g-1$ and $h^{0}(L')\geq h^{0}(\pi^*L)$, as claimed.
  \end{proof}
  
  In the case of smooth hyperelliptic curves, this bound is best possible.
  
  \begin{Lem}
    \label{lemma:bnHyperelliptic}
    If $C$ is a smooth hyperelliptic curve of genus $g\geq 1$, then
    \[
      \bn_C=\left\lfloor\frac{g-1}{2}\right\rfloor+1.
    \]
  \end{Lem}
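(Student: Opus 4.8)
The plan is to prove the two inequalities separately. The lower bound $\bn_C\ge\lfloor(g-1)/2\rfloor+1$ is immediate from Lemma~\ref{lemma:gonalityLowerBound} applied to the hyperelliptic double cover $\pi\colon C\to\PP^1$, which has degree $2$. For the upper bound it suffices, in view of the definition of $\bn_C$ and the fact that on a smooth curve every stable sheaf is a vector bundle, to show that any semistable vector bundle $E$ on $C$ with $\mu(E)\in(g-2,g)$ satisfies
\[
\frac{h^0(E)}{\rk(E)}\ \le\ \Big\lfloor\tfrac{g-1}{2}\Big\rfloor+1+\tfrac12\big(\mu(E)-(g-1)\big);
\]
letting the slope tend to $g-1$ in the definition of $\bn_C$ (so that the relevant interval lies inside $(g-2,g)$) then gives $\bn_C\le\lfloor(g-1)/2\rfloor+1$.

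To establish this estimate I would push $E$ forward along $\pi$. Recall $\omega_C\cong A^{\otimes(g-1)}$, where $A\coloneqq\pi^*\cO_{\PP^1}(1)$ has degree $2$. Since $\pi$ is finite, $\pi_*E$ is a vector bundle of rank $2r$ (with $r=\rk E$), and by Grothendieck's splitting theorem $\pi_*E\cong\bigoplus_{i=1}^{2r}\cO_{\PP^1}(a_i)$ with $\sum_i a_i=\deg(\pi_*E)=\deg E-r(g+1)$ (the last equality from $\chi(\pi_*E)=\chi(E)$). The projection formula gives $H^\bullet(C,E\otimes A^{\otimes k})=H^\bullet(\PP^1,(\pi_*E)(k))$ for all $k\in\Z$, so $h^0(E)=\sum_i\max(0,a_i+1)$, and the splitting type is pinned down by two vanishing statements: for $k>\mu(E)/2$ the bundle $E\otimes A^{\otimes(-k)}$ is semistable of negative slope, hence has no global sections, which forces $a_i\le k-1$ for all $i$; dually, for $j>g-1-\mu(E)/2$, Serre duality together with the semistability of $E^\vee$ gives $h^1(E\otimes A^{\otimes j})=0$, which forces $a_i\ge-j-1$ for all $i$. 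Taking $k=\lfloor\mu(E)/2\rfloor+1$ and $j=g-\lceil\mu(E)/2\rceil$ yields $\lceil\mu(E)/2\rceil-g\le a_i+1\le\lfloor\mu(E)/2\rfloor+1$ for every $i$.

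The final step is elementary. Writing $b_i=a_i+1$, one has $\sum_i b_i=\deg E-r(g-1)=r\big(\mu(E)-(g-1)\big)$ and $|b_i|\le M\coloneqq\max\!\big(\lfloor\mu(E)/2\rfloor+1,\ g-\lceil\mu(E)/2\rceil\big)$; adding the bound $\sum_{b_i>0}b_i\le M\cdot\#\{i:b_i>0\}$ to the identity-plus-bound $\sum_{b_i>0}b_i=\sum_i b_i+\sum_{b_i\le0}|b_i|\le r\big(\mu(E)-(g-1)\big)+M\cdot\#\{i:b_i\le0\}$ gives $h^0(E)=\sum_{b_i>0}b_i\le rM+\tfrac12 r\big(\mu(E)-(g-1)\big)$. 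A short check on the floor/ceiling functions shows that $M=\lfloor(g-1)/2\rfloor+1$ for every $\mu(E)\in(g-2,g)$ (both parities of $g$), which completes the estimate. The point requiring care is exactly this last computation of $M$: for $g$ odd it merely reproduces the naive Clifford-type bound $(g-1)/2+1$ of Lemma~\ref{lem:weak-bound}, whereas for $g$ even one genuinely gains the $\tfrac12$ coming from the floor, and one must verify that this gain is constant on a whole neighbourhood of the slope $g-1$ so that the limit defining $\bn_C$ is unaffected. I expect this parity bookkeeping, rather than any geometric input, to be the only delicate part of the argument.
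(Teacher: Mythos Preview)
Your argument is correct. The lower bound via Lemma~\ref{lemma:gonalityLowerBound} matches the paper exactly. For the upper bound, the pushforward computation along $\pi$ is sound: the degree formula, the splitting-type constraints $\lceil\mu(E)/2\rceil-g\le a_i+1\le\lfloor\mu(E)/2\rfloor+1$, the averaging trick bounding $\sum_{b_i>0}b_i$, and the verification that $M=\lfloor(g-1)/2\rfloor+1$ holds uniformly on $(g-2,g)$ for both parities of $g$ all go through as you describe.

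The paper takes a different and less self-contained route for the upper bound: for $g$ odd it simply invokes the general Clifford-type bound $\bn_C\le(g-1)/2+1$ of Lemma~\ref{lem:weak-bound}, and for $g$ even it cites an external result \cite[Theorem 6.2]{brambila:nonempty} giving $\bn_C\le g/2$. Your approach has the advantage of being uniform in the parity and entirely elementary, exploiting directly the hyperelliptic structure via Grothendieck splitting on $\PP^1$; the paper's approach is quicker to write but relies on outside input for the even case. In effect your argument reproves, in the hyperelliptic setting, exactly the half-integer improvement that the cited reference supplies.
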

  \begin{proof}
    If $g$ is odd, then $\bn_C\leq \frac{1}{2}(g+1)$ by Lemma \ref{lem:weak-bound}.
    If $g$ is even, then we have $\bn_C\leq \frac{1}{2}g$~\cite[Theorem 6.2]{brambila:nonempty}. Now the result follows from Lemma \ref{lemma:gonalityLowerBound} and $\gon(C)=2$.
  \end{proof}

  We could also consider an analogous argument to Lemma \ref{lemma:gonalityLowerBound} applied to minimal degree nondegenerate morphisms $C\to\PP^{n}$ with $n\geq 2$. For example, if $C$ is a planar curve, then we have:
 
  
  \begin{Lem}
    \label{lemma:bnSmoothQuintic}
    If $C$ is an integral planar curve of odd degree $d$, then 
    \[\bn_C=\frac{d^2-1}{8}.\]
  \end{Lem}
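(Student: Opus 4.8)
The plan is to establish the two inequalities $\bn_C\ge\frac{d^2-1}{8}$ and $\bn_C\le\frac{d^2-1}{8}$ separately; the hypothesis that $d$ is odd enters both times, being exactly the condition that $g-1=\frac{d(d-3)}{2}$ is divisible by $d$. For the lower bound I would imitate Lemma \ref{lemma:gonalityLowerBound} using the degree‑$d$ nondegenerate morphism $\iota\colon C\hookrightarrow\PP^2$: set $k\coloneqq\frac{d-3}{2}\in\Z_{\ge 0}$ and $L\coloneqq\cO_C(k)=\iota^*\cO_{\PP^2}(k)$. Since $C$ is integral of degree $d$, a general line cuts out on $C$ a Cartier divisor of degree $d$, so $\deg L=kd=g-1$ and $\mu(L)=g-1$; being a line bundle, $L$ is automatically stable. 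From $0\to\cO_{\PP^2}(k-d)\to\cO_{\PP^2}(k)\to\iota_*L\to 0$ together with $h^0(\cO_{\PP^2}(k-d))=h^1(\cO_{\PP^2}(k-d))=0$ (valid as $k-d<0$) one gets $h^0(L)=h^0(\cO_{\PP^2}(k))=\binom{k+2}{2}=\binom{(d+1)/2}{2}=\frac{d^2-1}{8}$, whence $\bn_C\ge\frac{h^0(L)}{\rk(L)}=\frac{d^2-1}{8}$ by Definition \ref{def:bnc}.

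For the upper bound I would run the wall‑crossing machinery of Section \ref{subsec:bn-wall-cross} with $S=\PP^2$ and $H$ the hyperplane class, so that $m=H^2=1$, the curve plays the role of $C\in|sH|$ with $s=d$, and $K_S=-3H$ (rather than the del Pezzo normalisation $K_S=-H$). The one structural change is that the distinguished point of the $(b,w)$‑plane is now $\nu_{BN}=-\tfrac32$: for a sheaf $E$ on $C$ of rank $r$ one computes $\ch(\iota_*E)=\bigl(0,\,rdH,\,r(\mu(E)-\tfrac{d^2}{2})\bigr)$, so $\nu_{BN}(\iota_*E)=\tfrac{\mu(E)}{d}-\tfrac{d}{2}\to\tfrac{d-3}{2}-\tfrac{d}{2}=-\tfrac32$ as $\mu(E)\to g-1$. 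Given a stable $E$ on $C$ with $\mu(E)$ in a small left‑neighbourhood of $g-1$, I would take the HN filtration $0=F_0\subset\cdots\subset F_k=\iota_*E$ of $\iota_*E\in\Coh^0(\PP^2)$ with respect to $\nu_{BN}$ (Lemma \ref{lem:HN-BN}), so that $h^0(E)=h^0(\iota_*E)\le\sum_i h^0(F_i/F_{i-1})$ with each factor BN‑semistable and $\sum_i\rk(F_i/F_{i-1})=0$.

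Three inputs are then needed, each the $\PP^2$‑counterpart of a lemma used for del Pezzo surfaces. First, the analogue of Lemma \ref{lem:bound-slope-bn-dp}: bounds on $\nu_{BN}^{\pm}(\iota_*E)$, obtained from Lemma \ref{lem:bound-wall} (chord‑width $\le s=d$ along the curve $\overline{\Gamma}_{\PP^2}$ bounding the images $\Pi(F)$ of $H$‑stable sheaves $F$ on $\PP^2$) together with the observation that, $d$ being odd, a chord of slope $-\tfrac32$ and width $d$ of the parabola $w=\tfrac12b^2$ has its endpoints at the \emph{integers} $b=\tfrac{d-3}{2}$ and $b=-\tfrac{d+3}{2}$, where $\overline{\Gamma}_{\PP^2}$ dips below the parabola. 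Second, the analogue of Lemma \ref{lem:h^0-bn-dp}: a bound $h^0(G)\le\rk(G)+\Psi(\ch_2(G),\ch_1(G).H)$ for BN‑semistable $G\in\Coh^0(\PP^2)$, coming from $\chi(\cO_{\PP^2},G)=\rk(G)+\tfrac{3}{2}\,\ch_1(G).H+\ch_2(G)$, the automatic vanishing $\mathrm{Ext}^2(\cO_{\PP^2},G)=0$, and the vanishing of $\mathrm{Ext}^1(\cO_{\PP^2},G)$ in the relevant slope range, which holds because $\cO_{\PP^2}(-3)[1]\in\Coh^0(\PP^2)$ has $\nu_{BN}=-\tfrac32$. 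Third, a convexity lemma à la Lemma \ref{lem:convex-dp} reducing the optimisation of $\sum_i\Psi(\overrightarrow{P_{i-1}P_i})$, over convex polygons inside the triangle cut out by the first two bounds, to the cases of one or two segments. Assembling these and letting $\mu(E)\to g-1$, exactly as in the proof of Proposition \ref{prop:delpezzo}, should yield $\bn_C\le\frac{d^2-1}{8}$, matching the lower bound.

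The main obstacle is the purely computational one of verifying that the maximum produced by the convexity step is \emph{exactly} $\frac{d^2-1}{8}$ — in particular that the additive constant appearing in the del Pezzo bound of Proposition \ref{prop:delpezzo} does not survive here — which requires the case analysis on the break point $P_1$ and careful bookkeeping of the dip of $\overline{\Gamma}_{\PP^2}$ at $b=\tfrac{d-3}{2}$ and $b=-\tfrac{d+3}{2}$. Two consistency checks are available: for $d=5$ the bound $\bn_C\le 3=\frac{d^2-1}{8}$ already follows from Lemma \ref{lem:mu/2-bound} (here $g=6$ and $\mathrm{Cliff}(C)=1$), and for $d\equiv 3\pmod 6$ it follows directly from Proposition \ref{prop:delpezzo} applied to $\PP^2$ with $H=-K_{\PP^2}=\cO_{\PP^2}(3)$ and $s=d/3$ odd, since then $1+\tfrac{s^2-1}{8}H^2=\frac{d^2-1}{8}\ge s$.
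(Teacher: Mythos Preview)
Your lower bound is exactly the paper's: the line bundle $\cO_{\PP^2}\bigl(\tfrac{d-3}{2}\bigr)\big|_C$ has degree $g-1$ and $h^0=\tfrac{d^2-1}{8}$.

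For the upper bound, the paper does not carry out any computation at all: it simply invokes \cite[Theorem~5.5]{feyz-li:clifford-indices}, noting that the proof there, stated for smooth planar curves, goes through verbatim for integral ones. What you are proposing to do---rerun the wall-crossing argument of Section~\ref{subsec:bn-wall-cross} on $S=\PP^2$ with $H=\cO_{\PP^2}(1)$, adapt Lemmas~\ref{lem:bound-slope-bn-dp}, \ref{lem:h^0-bn-dp}, and \ref{lem:convex-dp} to that setting, and then optimise---is precisely the content of that cited theorem. So your outline is correct in spirit, and your observations (the distinguished BN-slope $-\tfrac32$, the chord endpoints landing at the integers $\tfrac{d-3}{2}$ and $-\tfrac{d+3}{2}$ when $d$ is odd, the consistency check via Proposition~\ref{prop:delpezzo} for $d\equiv 3\pmod 6$) are all accurate. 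But as you yourself note, you have not actually carried out the optimisation that pins down the constant, so as written your argument is a plausible plan rather than a proof. The paper sidesteps this entirely by citing the finished result.
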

  \begin{proof}
    The line bundle 
    \[L\coloneqq \cO_{\PP^2}\left(\frac{d-3}{2}\right)|_C\]
    satisfies $\operatorname{deg}(L)=\frac{1}{2}d(d-3)=g(C)-1$ and $h^{0}(L)=\frac{1}{8}(d^2-1)$. Now the result follows from \cite[Theorem 5.5]{feyz-li:clifford-indices}, which is stated for smooth curves, but the proof works without any change in this case.
  \end{proof}

  So far, we have determined $\bn_C$ for all curves with $\mathrm{Cliff}(C)=0$ and some curves with $\mathrm{Cliff}(C)=1$.
  It would be interesting to determine $\bn_C$ for the remaining curves of Clifford index $\leq 1$ (i.e.~trigonal curves), but this does not seem easy in general:
  If $g=3$, then $\bn_C=4/3$~\cite{mercat:slope-smaller-2}.
  If $g=4,5$, then the bound $\bn_C\leq 2$ of Lemma \ref{lemma:gonalityLowerBound} is optimal \cite{lange:genus-4,lange:genus-5}.
  If $g=6$, we can improve Lemma \ref{lemma:gonalityLowerBound} using higher rank Brill--Noether theory of generic curves as follows.

  If $D$ is a generic curve of genus $6$ (in particular, $D$ is \emph{not} trigonal), then $D$ admits a stable bundle $E$ with invariants $\rk(E)=2$, $\deg(E)=10$, and $h^{0}(E)=5$~\cite{bertram:stable-rank-two-vector-bundles}.
  Then, by semicontinuity, any smooth projective curve $C$ of genus $6$ admits a semistable bundle $E^{\prime}$ with invariants $\rk(E^{\prime})=2$, $\deg(E^{\prime})=10$, and $h^{0}(E^{\prime})\geq 5$. Therefore, we have $\bn_C\geq 5/2$.
  On the other hand, \cite[Proposition 4.13]{lange:genus-6} shows $\bn_C\leq 161/60$ when $C$ is trigonal, which is strictly stronger than Lemma \ref{lem:mu/2-bound}.

  Note that the above argument only uses the fact that a lower bound on $\bn_D$ for generic curves $D$ implies the same lower bound on $\bn_C$ for any smooth projective curve of genus $g(D)$, and does not rely on the geometry of trigonal curves. More generally, applying the semicontinuity theorem countably many times, we have:

\begin{Lem}
Let $p\colon \mathcal{C}\to B$ be a flat projective family of integral curves of genus $g$ over an irreducible scheme $B$. Set $\cC_b\coloneqq p^{-1}(b)$ for $b\in B$.

\begin{enumerate}
    \item If there exists a constant $A_1>0$ such that $\bn_{\cC_b}\geq A_1$ for $b\in S\subset B$ in a Zariski dense subset $S$, then $\bn_{\cC_b}\geq A_1$ for any $b\in B$.

    \item If there exists a constant $A_2>0$ and a point $b_0\in B$ such that $\bn_{\cC_{b_0}}\leq A_2$, then $\bn_{\cC_{b}}\leq A_2$ for any very general point $b\in B$.
\end{enumerate}

\end{Lem}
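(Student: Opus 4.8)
The plan is to deduce both statements from a single closedness assertion about Brill--Noether loci in the family. For integers $r\ge 1$, $d\in\Z$ and $k\ge 0$, set
\[B(r,d,k)\coloneqq\big\{b\in B : \cC_b \text{ carries a semistable sheaf } F \text{ with } \rk(F)=r,\ \deg(F)=d,\ h^0(F)\ge k\big\}.\]
The heart of the argument is to prove that every $B(r,d,k)$ is a \emph{closed} subset of $B$. I would establish this by combining two facts. First, $B(r,d,k)$ is stable under specialization: if $b_0$ is a specialization of $b_1$, pick a discrete valuation ring $R$ with a morphism $\operatorname{Spec}R\to B$ carrying the generic (resp.\ closed) point to $b_1$ (resp.\ $b_0$); given a semistable sheaf $F_1$ on $\cC_{b_1}$ with the prescribed invariants and $h^0(F_1)\ge k$, Langton's semistable-reduction theorem produces, after base change to $\operatorname{Spec}R$, an $R$-flat family of sheaves with generic fibre $F_1$ and semistable special fibre $\cF_0$ of the same rank and degree (here all semistable sheaves occurring are torsion-free because the $\cC_b$ are integral, and everything is insensitive to the residue-field extensions involved since we are in characteristic $0$), and semicontinuity of cohomology gives $h^0(\cF_0)\ge h^0(F_1)\ge k$, so $b_0\in B(r,d,k)$. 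Second, $B(r,d,k)$ is constructible: semistable sheaves with fixed invariants on the fibres form a bounded family, so for $m\gg 0$ each such $F(m)$ is globally generated with $h^1(F(m))=0$, hence occurs as a quotient $\cO^{\oplus N}\twoheadrightarrow F(m)$ inducing an isomorphism on global sections; thus $B(r,d,k)$ is the image, under the projective morphism $\operatorname{Quot}_{\cC/B}(\cO^{\oplus N},P)\to B$, of the locally closed locus where the universal quotient is torsion-free and semistable, induces an isomorphism on $H^0$, and, after twisting by $-m$, has at least $k$ global sections. Since a constructible subset stable under specialization is closed, $B(r,d,k)$ is closed.

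Granting this, I would reformulate the invariant. Passing to Jordan--Hölder factors shows that in the definition of $\bn_C$ one may replace ``stable'' by ``semistable'' without changing the value, and hence
\[
\bn_{\cC_b}\ge A \iff \forall\, n\ge 1\ \exists\,(r,d,k):\ \tfrac dr\in\big(g{-}1{-}\tfrac1n,\,g{-}1{+}\tfrac1n\big),\ k>\big(A{-}\tfrac1n\big)r,\ b\in B(r,d,k),
\]
while dually $\bn_{\cC_b}\le A$ holds iff for every $n\ge 1$ there is $t_n>0$ with $b\notin B\big(r,d,\lceil(A+\tfrac1n)r\rceil\big)$ for all $(r,d)$ with $d/r\in(g-1-t_n,g-1+t_n)$. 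For part (1): a Zariski dense $S$ contains the generic point $\eta$ of $B$ (which is automatic whenever $S$ contains a dense open subset or is the complement of countably many proper closed subsets --- the situations arising in the applications), so $\bn_{\cC_\eta}\ge A_1$; for each $n$ the first equivalence then gives a locus $B(r,d,k)$ with $\eta\in B(r,d,k)$, which forces $B(r,d,k)=B$ since $B(r,d,k)$ is closed and $B$ is irreducible, so the first equivalence holds at every $b\in B$, i.e.\ $\bn_{\cC_b}\ge A_1$ for all $b$. For part (2): $\bn_{\cC_{b_0}}\le A_2$ yields, for each $n$, a threshold $t_n$ such that $b_0$ lies in none of the (finitely many for each $r$, hence countably many in total over all $n$) loci $B\big(r,d,\lceil(A_2+\tfrac1n)r\rceil\big)$; being closed and missing a point of the irreducible $B$, each such locus is a proper closed subset, so on the complement $V$ of their union --- a very general subset of $B$ --- the dual equivalence forces $\bn_{\cC_b}\le A_2+\tfrac1n$ for all $n$, hence $\bn_{\cC_b}\le A_2$.

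The step I expect to be the main obstacle is the closedness of $B(r,d,k)$: the two ingredients --- properness (valuative criterion) for semistable torsion-free sheaves on a flat family of integral curves, via Langton, and the Quot-scheme realization exhibiting constructibility --- are standard, but assembling them and keeping track of base-change behaviour in the trait argument is where the care lies; this is the ``semicontinuity applied countably many times'' of the statement. By contrast, the Jordan--Hölder reduction from stable to semistable sheaves and the bookkeeping translating the real-valued $\bn_C$ into membership in the integral loci $B(r,d,k)$ are routine.
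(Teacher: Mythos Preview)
Your approach matches the paper's, whose entire proof is the phrase ``applying the semicontinuity theorem countably many times''; your argument via closedness of the Brill--Noether loci $B(r,d,k)$ (Langton's theorem for stability under specialization, Quot schemes for constructibility), together with the Jordan--H\"older reduction from stable to semistable sheaves and the countable bookkeeping, is exactly the intended content, and your treatment of part~(2) is complete.

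The one genuine gap is in part~(1): your assertion that ``a Zariski dense $S$ contains the generic point $\eta$ of $B$'' is false in general (the set of closed points of $\mathbb{A}^1_\C$ is Zariski dense but misses $\eta$). You correctly note in your parenthetical that $\eta\in S$ \emph{is} automatic in the situations the paper actually uses --- $S$ the locus of generic curves, or $S$ very general --- and that suffices for the applications. But the lemma as literally stated does not follow from your argument: for fixed $n$ the relevant union $\bigcup B(r,d,k)$ over the countably many admissible triples is only a countable union of closed sets, and such a union can contain a Zariski dense subset without any single $B(r,d,k)$ equalling $B$ (e.g.\ $\Q\subset\mathbb{A}^1_\C$ sits in a countable union of points). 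Either read the hypothesis of part~(1) as ``$S$ contains a nonempty open'' (equivalently $\eta\in S$), which is what the paper's applications need, or flag explicitly that the statement is slightly imprecise as written.
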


In particular, part (b) together with \cite[Theorem 1.1]{feyz-li:clifford-indices} implies that
\[\bn_C \leq \frac{g}{4}+1+\frac{1}{g}\]
for any very general curve $C$ of genus $g\geq 2$. It would be interesting to determine $\bn_C$ for such curves.



  We can also precisely determine $\bn_C$ in the case of bielliptic curves.
  This calculation is especially interesting because we can explicitly compute $\bn_C$, but, as far as we are aware, there does not exist a stable bundle achieving the value of $\bn_C$.
  Furthermore, this calculation involves stable bundles of arbitrarily large rank.
  
  \begin{Lem}
    \label{lemma:brillNoetherNumberbielliptic}
    Suppose $C$ is a smooth bielliptic curve of genus $g\geq 4$. If $C$ is neither hyperelliptic nor trigonal, then
    \[
      \bn_C= \frac{g-1}{2}.
    \]
  \end{Lem}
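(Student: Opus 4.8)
The plan is to prove the two inequalities $\bn_C\le\frac{g-1}{2}$ and $\bn_C\ge\frac{g-1}{2}$ separately. Since every curve of genus $4$ is hyperelliptic or trigonal, the hypothesis is vacuous for $g=4$, so I may assume $g\ge 5$; write $\pi\colon C\to E$ for the bielliptic double cover with $E$ an elliptic curve.

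For the upper bound, the first step is to show $\mathrm{Cliff}(C)=2$. Composing $\pi$ with a base-point-free $g^1_2$ on $E$ (any degree-$2$ line bundle on $E$ has $h^0=2$ and is base-point-free) gives a line bundle $N$ on $C$ with $\deg N=4\le g-1$ and $h^0(N)=2$, so $\mathrm{Cliff}(C)\le \deg N-2(h^0(N)-1)=2$. Conversely $C$ is not hyperelliptic and not trigonal by assumption, and it is not a smooth plane quintic: a smooth plane quintic $C'$ has $g(C')=6$ and a unique $g^2_5$, so any involution would be induced by a linear involution of $\PP^2$, whose fixed locus is a line together with a point and hence meets $C'$ in at most $6$ points — contradicting the Riemann--Hurwitz identity $\#\mathrm{Fix}=2g(C')-2=10$ forced by a bielliptic structure in genus $6$. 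By the classification of curves of Clifford index $\le 1$ recalled above, this gives $\mathrm{Cliff}(C)\ge 2$, hence $\mathrm{Cliff}(C)=2$, and Lemma \ref{lem:mu/2-bound} yields $\bn_C\le \frac{g-1}{2}+1-1=\frac{g-1}{2}$.

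For the lower bound I would pull back stable bundles from $E$. Fix a rational number $q=d/r$ in lowest terms with $0<q<\frac{g-1}{2}$. By Atiyah's classification there is a stable bundle $W$ on $E$ of rank $r$ and degree $d$, and since $\mu(W)=q>0$, stability forces $h^1(W)=0$, whence $h^0(W)=\chi(W)=d$. Pullback along a finite morphism of smooth projective curves over $\C$ preserves semistability (pass to the Galois closure and use that the maximal destabilizing subsheaf is canonical, hence $\Gamma$-invariant and therefore descends), so $\pi^*W$ is a semistable bundle on $C$ of slope $2q<g-1$ with $h^0(\pi^*W)\ge h^0(W)=d$, giving $h^0(\pi^*W)/\rk(\pi^*W)\ge q$. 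Since $h^0$ is subadditive along short exact sequences and the Jordan--H\"older factors of $\pi^*W$ are stable of slope exactly $2q$, a mediant-inequality argument produces a Jordan--H\"older factor $E_q$ — stable, with $\mu(E_q)=2q$ — satisfying $h^0(E_q)/\rk(E_q)\ge q$. Letting $q\uparrow\frac{g-1}{2}$, the slopes $2q$ approach $g-1$ from below while $q\to\frac{g-1}{2}$, so $\bn_C\ge\frac{g-1}{2}$. Combined with the upper bound this gives $\bn_C=\frac{g-1}{2}$.

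The two points requiring the most care are exactly the ones that let us reach the sharp constant $\frac{g-1}{2}$ rather than the weaker $\frac{g-1}{2}+1$ of Lemma \ref{lem:weak-bound}: first, verifying that a smooth bielliptic curve is never a plane quintic, which is needed to upgrade $\mathrm{Cliff}(C)\le 2$ to $\mathrm{Cliff}(C)=2$ and relies on the Riemann--Hurwitz/linear-involution argument sketched above; and second, the fact that $\pi^*W$, although it may fail to be stable, is semistable and distributes its section count evenly over its Jordan--H\"older factors of the same slope, so that the lower bound is attained by genuinely \emph{stable} sheaves as required in the definition of $\bn_C$. Neither step is deep, but both are essential.
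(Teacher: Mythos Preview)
Your proof is correct. The upper bound is identical to the paper's: both apply Lemma \ref{lem:mu/2-bound} with $\mathrm{Cliff}(C)\ge 2$; you are simply more explicit than the paper in ruling out the plane-quintic case.

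For the lower bound you take a genuinely different route. The paper quotes \cite[Theorem 5.3]{ballico:bNCurves}, which produces stable bundles on $C$ directly with $h^0(E)/\rk(E)=\lfloor (d-1)/2\rfloor/r$ for every $(r,d)$, and then lets $r\to\infty$. You instead pull back Atiyah's stable bundles from $E$ along $\pi$, use that $h^0(\pi^*W)=h^0(W)$ (because the other summand of $\pi_*\cO_C$ kills sections at this slope), and pass to a Jordan--H\"older factor via the mediant inequality. Both are valid; your argument is more self-contained and makes the bielliptic structure do the work, at the cost of the extra JH step since $\pi^*W$ need not be stable. The paper's citation is quicker but opaque --- Ballico's construction on bielliptic curves is in fact built on the same idea of pushing bundles across $\pi$.
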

  \begin{proof}
    By \cite[Theorem 5.3]{ballico:bNCurves}, for every integer $d\geq 1$ and $r\geq 2$, we can find a stable bundle $E$ on $C$ satisfying $\rk(E)=r$, $\deg(E)=d$, and
    \[
      \frac{h^{0}(E)}{r}= \frac{1}{r}\left\lfloor\frac{d-1}{2}\right\rfloor.
    \]
    In particular, we get
    \[\bn_C\geq \frac{1}{r}\left\lfloor\frac{r(g-1)-1}{2}\right\rfloor \geq \frac{r(g-1)-2}{2r}.\]
    Taking the limit $r\to \infty$ shows $\bn_C\geq \frac{1}{2}(g-1)$.
   When $C$ is neither hyperelliptic nor trigonal, from Lemma \ref{lem:mu/2-bound}, we obtain $\bn_C\leq \frac{1}{2}(g-1)$, as claimed.
  \end{proof}
  
  Using \cite[Theorem 5.6]{ballico:bNCurves}, a similar result can be proved for double covers of hyperelliptic curves that are neither hyperelliptic nor trigonal.




\section{A collection of CY threefolds }\label{sec:app}
Finally, we combine the results of the previous sections to prove \ref{conj:bmt} for a collection of Calabi--Yau threefolds. 

An immediate corollary of Theorem \ref{thm:main-criterion} is the following criterion:

\begin{Cor}\label{cor:trivial-cor-basepoint-free}
Let $(X, H)$ be a polarised Calabi--Yau threefold. Suppose that there exists a smooth surface $S\in |H|$ and a smooth curve $C\in |H_S|$ (e.g. $H$ is basepoint-free). If
\[c_2(X).H> 4H^3+12,\]
then \emph{\ref{conj:bmt}} holds for $(X,H)$ and some $1$-cycle $\Gamma$ with $\Gamma.H\geq 0$. Furthermore, if $2H$ is very ample, then it suffices to assume
\[c_2(X).H> 4H^3+6.\]
\end{Cor}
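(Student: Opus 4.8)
The plan is to deduce both assertions from Theorem \ref{thm:main-criterion}(a) by bounding $\bn_C$ with the classical estimates of Section \ref{subsec:classical-bound}. Fix a smooth surface $S\in|H|$ and a smooth curve $C\in|H_S|$ as in the hypothesis; when $H$ is basepoint-free, two successive applications of Bertini's theorem produce such $S$ and $C$, and $C$ is connected (hence integral) because $H^0(\cO_S(-C))=H^1(\cO_S(-C))=0$ by ampleness and Kodaira vanishing. Adjunction on $X$ gives $K_S=(K_X+H)|_S=H_S$, and then adjunction on $S$ gives $2g(C)-2=C.(C+K_S)=2(H_S)^2=2H^3$, so $g\coloneqq g(C)=H^3+1$. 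Since $K_X=0$, Hirzebruch--Riemann--Roch yields
\[
\chi(\cO_X(H))=\frac{H^3}{6}+\frac{c_2(X).H}{12}.
\]

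For the first assertion, Lemma \ref{lem:weak-bound} gives $\bn_C\le\frac{g-1}{2}+1=\frac{H^3}{2}+1$. Clearing denominators, the hypothesis $c_2(X).H>4H^3+12$ is exactly the inequality $\chi(\cO_X(H))>\frac{H^3}{2}+1\ge\bn_C$, so \eqref{eq:thm-bn-smooth} holds and Theorem \ref{thm:main-criterion}(a) gives \ref{conj:bmt} for $(X,H)$ and some $1$-cycle $\Gamma$ with $\Gamma.H\ge0$. For the second assertion, suppose moreover that $2H$ is very ample. Then $K_C=2H_C=(2H)|_C$ is very ample on $C$; a curve of genus $\ge 2$ whose canonical bundle is very ample is non-hyperelliptic, so $C$ is non-hyperelliptic and $g\ge 3$, whence $\mathrm{Cliff}(C)\ge1$. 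By Lemma \ref{lem:mu/2-bound} (applied when $g\ge4$; when $g=3$ the curve $C$ is a smooth plane quartic, and the bound $\bn_C\le\frac{g-1}{2}+\frac12$ follows from the same wall-crossing argument as in Lemma \ref{lemma:bnSmoothQuintic}), we obtain $\bn_C\le\frac{H^3+1}{2}$. The hypothesis $c_2(X).H>4H^3+6$ rearranges to $\chi(\cO_X(H))>\frac{H^3+1}{2}\ge\bn_C$, so Theorem \ref{thm:main-criterion}(a) applies once more.

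The argument is essentially a substitution into the Riemann--Roch formula together with the bounds of Section \ref{subsec:classical-bound}, so there is no serious obstacle in the main line of reasoning. The one place that needs a little care is the tail of the second assertion: one must confirm that the very ampleness of $2H$ genuinely forces $g(C)\ge 3$ (so that $C$ is non-hyperelliptic and of Clifford index $\ge 1$), and that the remaining genus~$3$ plane-quartic case still produces a bound strictly below $\chi(\cO_X(H))$; both are routine but should be spelled out.
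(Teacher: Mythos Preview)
Your proof is correct and follows the same approach as the paper: compute $g(C)-1=H^3$ by adjunction, rewrite the numerical hypotheses via Hirzebruch--Riemann--Roch as $\chi(\cO_X(H))>\frac{H^3}{2}+1$ (respectively $>\frac{H^3+1}{2}$), bound $\bn_C$ by Lemma~\ref{lem:weak-bound} (respectively Lemma~\ref{lem:mu/2-bound} after observing that $\omega_C=2H_C$ very ample forces $C$ to be non-hyperelliptic), and conclude by Theorem~\ref{thm:main-criterion}(a).

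One minor remark on your handling of the second assertion: the paper simply invokes Lemma~\ref{lem:mu/2-bound} after noting $C$ is non-hyperelliptic, without separating out the small-genus case. Your extra care about $g=3$ is reasonable, but the specific pointer to Lemma~\ref{lemma:bnSmoothQuintic} is slightly off, since that lemma treats planar curves of \emph{odd} degree, whereas a genus~$3$ canonical curve is a plane quartic. If you want to nail down that boundary case, it is cleaner to cite the fact $\bn_C=4/3$ for smooth non-hyperelliptic genus~$3$ curves recorded in Section~\ref{subsec:bn-lower-bound} (from \cite{mercat:slope-smaller-2}), or to apply Proposition~\ref{prop:delpezzo} to $C\hookrightarrow\PP^2$ with $s=4$ after adapting it to even $s$.
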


\begin{proof}
From Hirzebruch--Riemann--Roch, the first assumption is equivalent to
\begin{equation}\label{chi}
  \frac{H^3}{2}+1< \chi(\cO_X(H)).  
\end{equation}
Since $g(C)-1=H^3$, \ref{conj:bmt} for $(X,H)$ follows from Lemma \ref{lem:weak-bound} and Theorem \ref{thm:main-criterion}(a).

When $2H$ is very ample, we see that $\omega_C=\cO_C(2H_C)$ is very ample as well. In particular, $C$ is not hyperelliptic. Then the second statement follows from Lemma \ref{lem:mu/2-bound} and Theorem \ref{thm:main-criterion}(a).
\end{proof}

There exist Calabi--Yau threefolds that satisfy the assumption in Corollary \ref{cor:trivial-cor-basepoint-free}, such as examples in \cite{chunyi:stability-condition-quintic-threefold,koseki:double-triple-solids,liu:bg-ineqaulity-quadratic}. On the other hand, this assumption is restrictive since $c_2(X).H\leq 4H^3+36$ when $H$ is basepoint-free and $c_2(X).H\leq 2H^3+40$ when $H$ is very ample (cf.~\cite[Proposition 2.4]{wilson:trilinear-cy3}). In the rest of this section, we aim to get better criteria in various situations.

\subsection{Anticanonical divisors in Fano fourfolds of index $\geq 2$}

A natural source of Calabi--Yau threefolds is anticanonical divisors of smooth Fano fourfolds. For a Fano manifold $M$, the \emph{index} $r$ is defined to be the largest integer such that $$H_M\coloneqq-\frac{1}{r}K_M\in \Pic(M),$$ which satisfies $1\leq r\leq \dim M+1$. If $\dim M=4$, by the adjunction formula, any smooth divisor $X$ in $|-K_M|$ is a Calabi--Yau threefold. Using Proposition \ref{prop:delpezzo} and \ref{prop:K3}, we can prove \ref{conj:bmt} for $X$ under some additional assumptions on $M$.

\begin{Thm}\label{thm:cicy-in-fano}
Let $M$ be a smooth Fano fourfold of index $r$, $X\in |-K_M|$ be a general divisor, and $H\coloneqq H_X$. If either $r\geq 3$, or $r=2$ and $M$ has Picard number one, then \emph{\ref{conj:bmt}} holds for $(X,H)$ and some $1$-cycle $\Gamma$ with $\Gamma.H\geq 0$.
\end{Thm}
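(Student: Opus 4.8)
\emph{Strategy.} The plan is to apply Theorem \ref{thm:main-criterion}(a): for general members $Y,Z\in|H_M|$, where $H_M=-\tfrac1rK_M$, set $S\coloneqq X\cap Y$ and $C\coloneqq X\cap Y\cap Z$. Then $C\in|H_M|_S|=|H_S|$, and adjunction gives $K_S=(K_M+X+Y)|_S=H_S$, which is exactly the polarisation condition in Proposition \ref{prop:curve-to-surface-smooth}; for $X$ general, Bertini makes $S$ and $C$ smooth. The essential point is that $C$ also lies on the auxiliary surface $T\coloneqq Y\cap Z$, a complete intersection of two members of $|H_M|$ in $M$, and adjunction gives $K_T=(K_M+2H_M)|_T=(2-r)H_T$ with $H_T\coloneqq H_M|_T$. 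Hence $T$ is a smooth del Pezzo surface when $r\in\{3,4,5\}$ and a smooth K3 surface when $r=2$, and $C=X\cap T$ lies in $|rH_T|$. It then remains to bound $\bn_C$ via the surface $T$ and compare with $\chi(\cO_X(H))$.

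\emph{Bounding $\bn_C$.} For $r=3$ one has $-K_T=H_T$ and $C\in|3H_T|$, so Proposition \ref{prop:delpezzo} with $s=3$ gives $\bn_C\le\max\{1+d,3\}$ where $d\coloneqq H_M^4=H_T^2$. For $r=5$ (the quintic threefold) the curve $C$ is a smooth plane quintic, so $\bn_C=3$ by Lemma \ref{lemma:bnSmoothQuintic}. For $r=4$ (so $M=Q^4$, $T\cong\PP^1\times\PP^1$, and $C$ a general $(2,4)$ complete intersection curve of genus $9$) the two rulings of $T$ together with the Castelnuovo--Severi inequality (Lemma \ref{lem:cast-severi}) force $\gon(C)=4$, hence $\mathrm{Cliff}(C)\ge2$ and Lemma \ref{lem:mu/2-bound} yields $\bn_C\le\tfrac{g(C)-1}{2}=4$. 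For $r=2$ the general $T$ is a K3 surface with $\Pic(T)=\Z H_T$ by a Noether--Lefschetz argument, and $C\in|2H_T|$, so Proposition \ref{prop:K3} with $s=2$ gives $\bn_C\le2+\tfrac12H_T^2=2+\tfrac12H_M^4$.

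\emph{The numerical comparison.} Since $X\in|-K_M|$, the sequence $0\to\cO_M(K_M+H_M)\to\cO_M(H_M)\to\cO_X(H)\to0$ together with Kawamata--Viehweg vanishing $H^{>0}(M,K_M+H_M)=0$ and Kodaira vanishing on $X$ gives $\chi(\cO_X(H))=h^0(M,H_M)$. For del Pezzo fourfolds the $\Delta$-genus is one, so $h^0(M,H_M)=4+d-1=d+3$; for $M=Q^4$ and $M=\PP^4$ one computes $h^0(M,H_M)=6$ and $5$ directly; and for Picard-rank-one Mukai fourfolds $h^0(M,H_M)=g+3$ with $H_M^4=2g-2$. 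Substituting the bounds above: $\bn_C\le\max\{1+d,3\}<d+3$ for $r=3$ (using $d\ge2$), $\bn_C\le4<6$ for $r=4$, $\bn_C=3<5$ for $r=5$, and $\bn_C\le g+1<g+3$ for $r=2$. In every case Theorem \ref{thm:main-criterion}(a) applies and gives \emph{\ref{conj:bmt}} for $(X,H)$. (Curiously, all four bounds meet the lower bound $\bn_C\ge\chi(\cO_X(H))-2$ of Lemma \ref{eq:lower-bound-bn}, so in fact $\bn_C=\chi(\cO_X(H))-2$ throughout.)

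\emph{Main obstacles.} The genuine difficulties are three. First, in the smallest-invariant cases---the degree-one and degree-two del Pezzo fourfolds and the genus-two Mukai fourfold---one must check that $|H_M|$ is still basepoint-free so that Bertini produces smooth $S,C,T$, or else pass to the singular version Theorem \ref{thm:main-criterion}(b) or recognize the example among the weighted complete intersections of Theorem \ref{thm:hypergeo}. Second, the case $r=2$ requires $\Pic(T)=\Z H_T$ for a general complete-intersection K3 surface in $M$ (including Noether--Lefschetz for double covers in the low-genus cases), which is the only non-elementary input. Third, for $r=4$ one cannot simply invoke the even-$s$ analogue of Proposition \ref{prop:delpezzo}: writing $C\in|2(-K_T)|$ with $(-K_T)^2=8$, that bound would give only $\bn_C\le6$, not the strict inequality $\bn_C<6$, so one must instead run the Clifford-index estimate and hence verify that a general $(2,4)$ curve is neither hyperelliptic nor trigonal.
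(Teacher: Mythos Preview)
Your strategy is essentially the paper's: the auxiliary surface $T=Y\cap Z$ is exactly the paper's $S_1$, and for $r=3$ and $r=2$ you apply Propositions~\ref{prop:delpezzo} and~\ref{prop:K3} just as the paper does, with the same numerical comparison $\chi(\cO_X(H))=\chi(\cO_M(H_M))$. Two points of comparison are worth noting.

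First, for $r\in\{4,5\}$ the paper takes a shorter path: the weak bound of Lemma~\ref{lem:weak-bound} already gives $\bn_C\le\tfrac{g-1}{2}+1$, which equals $3.5$ (quintic, $g=6$) and $5$ ($(2,4)$-curve, $g=9$), strictly below $\chi(\cO_X(H))=5$ and $6$ respectively. Your plane-quintic and Castelnuovo--Severi arguments are correct but unnecessary; in particular there is no need to worry about the even-$s$ analogue of Proposition~\ref{prop:delpezzo} failing to give a strict inequality for $r=4$.

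Second, your smoothness obstacle is the one place where the paper does something you do not. Rather than falling back on Theorem~\ref{thm:main-criterion}(b) or on Theorem~\ref{thm:hypergeo}, the paper runs a ladder: by Mella's theorem a general $Y_1\in|H_M|$ is a (smooth) Fano threefold of index $r-1$; by the Shafarevich--Iskovskikh classification a general $S_1\in|H_{Y_1}|$ is smooth; and $rH_{Y_1}$, $rH_{S_1}$ are basepoint-free by Ein--Lazarsfeld, so Bertini gives smooth $S$ and $C$ even when $|H_M|$ itself has basepoints. This handles the degree-$1$ and degree-$2$ del Pezzo fourfolds and the low-genus Mukai fourfolds uniformly and self-containedly.

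Two small corrections to your remarks: the inequality $\max\{1+d,3\}<d+3$ holds for all $d\ge1$, not only $d\ge2$; and your parenthetical that $\bn_C=\chi(\cO_X(H))-2$ ``throughout'' misses the case $r=3$, $H_M^4=1$, where Proposition~\ref{prop:delpezzo} only gives $\bn_C\le3$ while $\chi(\cO_X(H))-2=2$. The paper records this exception explicitly after the proof.
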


\begin{proof}
We first consider 
\begin{itemize}
    \item a general divisor $X \in |rH_M|$, 
    \item a general divisor $S \in |H|$, and 
    \item a general divisor $C \in |H_S|$. 
\end{itemize}

Alternatively, the surface $S$ can be constructed via a different procedure:

\begin{itemize}
    \item take a general divisor $X' \in |H_M|$, and then
    \item $S$ is obtained as a general divisor in $|rH_{X'}|$.
\end{itemize}
By \cite[Theorem 2.3.2]{shafarevich:fano-varieties} and \cite[Theorem 1]{mella:good-divisor}, $X'$ is a smooth Fano threefold of index $r-1$. Moreover, $rH_{X'}$ is basepoint-free by \cite[Corollary 2 and 2*]{ein-lazarsfeld:global-generation}, so $S$ is a smooth surface.

Similarly, the curve $C$ can be constructed via:
\begin{itemize}
    \item take a general divisor $S' \in |H_{X'}|$, and then 
    \item $C$ is obtained as a general divisor in $|rH_{S'}|$. 
\end{itemize}
By \cite[Theorem 2.3.1]{shafarevich:fano-varieties}, $S'$ is a smooth del Pezzo surface for $r=3$ or a smooth K3 surface for $r=2$. Then the basepoint freeness of $|rH_{S'}|$ implies that $C$ is a smooth curve.


Combining the above argument with Theorem \ref{thm:main-criterion}(a), to prove \ref{conj:bmt}, it suffices to verify \eqref{eq:thm-bn-smooth} for $C$ constructed above. When $r=5$ or $4$, this can be checked using Lemma \ref{lem:weak-bound}, or equivalently, Corollary \ref{cor:trivial-cor-basepoint-free}. In the following, we may assume that $r=3$ or $2$.
By the above argument, $S'$ is a del Pezzo surface when $r=3$ and a K3 surface when $r=2$, and $C$ embeds into $S'$ as a divisor in $|rH_{S'}|$. In the former case, Proposition \ref{prop:delpezzo} implies $\bn_C\leq \max\{1+H_M^4,3\}$ and the result can be deduced from \cite[Corollary 2.1.14]{shafarevich:fano-varieties} since $\chi(\cO_X(H))=\chi(\cO_M(H_M))=H_M^4+3$. For $r=2$, if $M$ has Picard number one and $H_M^4\geq 6$, by taking $S'$ to be very general, we know that $S'$ is a smooth K3 surface with $\Pic(S')=\Z H_{S'}$ by \cite[Lemma 6.1(c)]{bayer:mukai-model-fano-var}. So we can apply Proposition \ref{prop:K3} to $C\hookrightarrow S'$ and obtain \[\bn_C\leq \frac{1}{2}H_M^4+2.\]
Then the result can be deduced from $$\chi(\cO_X(H))=\chi(\cO_M(H_M))=\frac{1}{2}H_M^4+4.$$
Finally, when $H_M^4\leq 4$, we have $H_X^3\leq 8$, and the result follows from Corollary \ref{cor:trivial-cor-basepoint-free}.
\end{proof}

\begin{Rem}\label{rem-equality}
    Adopting the notation from Theorem \ref{thm:cicy-in-fano}  and its proof, we conclude that
    $$\bn_C= h^0(\cO_C(H_C)) = \chi(\cO_X(H))-2$$ when $r\geq 2$ unless $r=3$ and $H^4_M=1$.
\end{Rem}

To generalize Theorem \ref{thm:cicy-in-fano} to index~$2$ Fano fourfolds with higher Picard number, which are listed in \cite[Section~12.7]{shafarevich:fano-varieties}, it suffices to find an appropriate analogue of \cite[Proposition~3.4(b)]{feyz:mukai-program} for K3 surfaces contained in such fourfolds. For instance, when $M$ is a double cover of $\PP^2 \times \PP^2$, or a complete intersection of two $(1,1)$-divisors in $\PP^3 \times \PP^3$, a naive generalization of \cite[Proposition~3.4(b)]{feyz:mukai-program} holds. This yields Conjecture~\ref{conj:bmt} for $X \in |-K_M|$. We leave a more detailed treatment to future work. Note that for some of these Fano fourfolds, it is straightforward to verify Conjecture~\ref{conj:bmt} directly, as explained below.

\begin{Ex}\label{ex:high-pic-rk}
If $M = \PP^1\times Y_1$ for a del Pezzo threefold $Y_1$ of degree $1$ and $X\in |-K_M|$ is a general divisor, then $\chi(\cO_M(H_M))=6$ and $H^3=2H^4_M=8$. So \ref{conj:bmt} holds for $(X, H)$ by Corollary \ref{cor:trivial-cor-basepoint-free} and the first claim in Theorem \ref{thm:cicy-in-fano}.

If $M=\PP^1\times \PP^3$, $X\in |-K_M|$ is a general divisor, and $H$ is the restriction of $(1,1)$-divisor on $M$, then we have $H^3=14$ and $\chi(\cO_X(H))=8$. As $H$ is very ample, \ref{conj:bmt} holds for $(X, H)$ by the second statement of Corollary \ref{cor:trivial-cor-basepoint-free}.
\end{Ex}

\subsection{Weighted CICY}\label{sec:hypergeo}

We now discuss one of the most important classes of smooth Calabi--Yau threefolds, which are quasi-smooth complete intersections in weighted projective spaces. There are $13$ deformation families, also known as hypergeometric one-parameter models in string theory.

\begin{Thm}\label{thm:hypergeo}
Let $X$ be a Calabi--Yau threefold that can be realized as a quasi-smooth complete intersection in a weighted projective space and is not
an intersection with a linear cone, and let $H$ be the ample generator of $\Pic(X)$. Then \emph{\ref{conj:bmt}} holds for $(X, H)$ and some $1$-cycle $\Gamma$ with $\Gamma.H\geq 0$.
\end{Thm}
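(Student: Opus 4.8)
The plan is to run through the thirteen deformation families of such $X$ and, in each, verify the hypotheses of Theorem~\ref{thm:main-criterion}. Two numerical identities are used throughout: adjunction together with $K_X=0$ gives $g(C)-1=H^3$ for any $C\in|H_S|$ with $S\in|H|$, while Kawamata--Viehweg vanishing together with projective normality of the weighted embedding in degree one gives $\chi(\cO_X(H))=h^0(\cO_X(H))=\dim(\text{degree-}1\text{ part of the homogeneous coordinate ring})$, which equals the number of weight-one coordinates since every defining equation has degree at least two. For the five models realised in an ordinary projective space, $H$ is basepoint-free, so Bertini yields smooth $S\in|H|$ and smooth $C\in|H_S|$, and Theorem~\ref{thm:main-criterion}(a) applies once $\bn_C<\chi(\cO_X(H))$ is checked. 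For the weighted models, one first inspects the base locus of $|mH|$ on the ambient weighted projective space: using quasi-smoothness one checks that a general $X$ avoids the singular strata of the ambient space and that, after possibly replacing $H$ by a small multiple $mH$ (Remark~\ref{rmk:basepoint-free}), $|mH|$ restricts to a basepoint-free system on $X$ whose general surface and curve sections are smooth and integral. A multiple $m>1$ is genuinely needed only when $X$ has exactly two weight-one coordinates (the $(6,6)$-complete intersection in $\PP(1,1,2,2,3,3)$), where $|H_S|$ acquires a fixed component; there one takes $m=2$.

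With $S$ and $C$ in hand, I would bound $\bn_C$ as follows. For most families the naive bound $\bn_C\leq\tfrac12(g(C)-1)+1=\tfrac12 H^3+1$ of Lemma~\ref{lem:weak-bound} already lies strictly below $\chi(\cO_X(H))$ --- this is precisely the inequality of Corollary~\ref{cor:trivial-cor-basepoint-free}. The borderline families are those where the naive bound only gives $\leq\chi(\cO_X(H))$. For the $(3,2,2)$-complete intersection in $\PP^6$, the curve $C$ is a $(2,2,3)$-complete intersection in $\PP^4$ of genus $13$; it is projectively normal with $h^0(\cO_C(H_C))=5$ and $\cO_C(2H_C)=\omega_C$, hence not hyperelliptic (a hyperelliptic curve would force $\cO_C(H_C)=6g^1_2$ with $h^0=7$), so Lemma~\ref{lem:mu/2-bound} improves the bound by $\tfrac12$ and $\bn_C\leq\tfrac{13}{2}<7=\chi(\cO_X(H))$; the remaining non-hyperelliptic/non-trigonal statements, when needed, follow the same way, or from the Castelnuovo--Severi inequality (Lemma~\ref{lem:cast-severi}), or from the explicit low-genus computations of Section~\ref{subsec:bn-lower-bound}. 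For the families where these bounds are still insufficient --- principally the $(2,2,2,2)$-complete intersection in $\PP^7$ and the $(6,6)$-complete intersection in $\PP(1,1,2,2,3,3)$ --- I would realise $C$ inside an auxiliary complete-intersection surface $S'$: for $X_{2,2,2,2}$, omitting one of the four quadrics exhibits $C\in|2H_{S'}|$ on a smooth K3 surface $S'\subset\PP^5$ of degree $8$ with $\Pic(S')=\Z H$, and Proposition~\ref{prop:K3} with $s=2$ gives $\bn_C\leq 2+\tfrac12 H_{S'}^2=6<8=\chi(\cO_X(H))$; for the $(6,6)$-model, after eliminating the two weight-two variables on which the added degree-two equations are linear, $C$ becomes a $(6,6)$-complete intersection in $\PP(1,1,3,3)$ lying on the sextic del Pezzo surface $S'=\{F_6=0\}$, with $C\in|3(-K_{S'})|$ and $(-K_{S'})^2=\tfrac83$, so Proposition~\ref{prop:delpezzo} with $s=3$ gives $\bn_C\leq\max\{1+\tfrac83,\,3\}<5=\chi(\cO_X(2H))$. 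Feeding these bounds into Theorem~\ref{thm:main-criterion} produces \ref{conj:bmt}.

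The main obstacle is that the naive Brill--Noether bound is asymptotically a factor of roughly three too weak relative to $\chi(\cO_X(H))\sim\tfrac16 H^3$, so rescaling the polarisation never helps: for each family not covered by the naive or Clifford bounds one must exhibit a \emph{specific} auxiliary del Pezzo or K3 surface $S'$ carrying $C$, pin down the divisibility class $C\in|sH_{S'}|$, and check the hypotheses of Propositions~\ref{prop:delpezzo} and~\ref{prop:K3} --- that $S'$ has at worst rational Gorenstein singularities (so, for weighted del Pezzos, that a general member meets the ambient singularities only mildly) and, in the K3 case, that $\Pic(S')$ is generated by $H_{S'}$, a Noether--Lefschetz-type genericity input. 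The surface $S'$ is produced by dropping one equation from the (weighted) complete-intersection presentation of $C$ and, in the weighted case, eliminating the coordinates on which low-degree equations are linear; the delicate point is to confirm that the resulting surface is indeed a Gorenstein del Pezzo (or smooth K3) of the expected degree and that the curve one constructs on it is simultaneously a smooth integral member of $|H_S|$ for an admissible $S\in|mH|$.
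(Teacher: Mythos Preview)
Your overall strategy matches the paper's: enumerate the thirteen families and verify the hypothesis of Theorem~\ref{thm:main-criterion} case by case. For the five complete intersections in ordinary projective space the paper invokes Theorem~\ref{thm:cicy-in-fano} uniformly (these are anticanonical divisors in Fano fourfolds of index $\geq 2$), while your Clifford/K3 arguments are more ad hoc but also work. For the five weighted models with $|H|$ basepoint-free, both approaches coincide. Interestingly, your claim that $m=1$ suffices for $X_{10}\subset\PP(1^3,2,5)$ and $X_{4,6}\subset\PP(1^3,2^2,3)$ appears to be correct and is simpler than the paper's detour through $m=2$: in each case a general $S\in|H|$ and $C\in|H_S|$ are quasi-smooth weighted complete intersections avoiding the ambient singular locus, and the naive bound $\bn_C\leq\tfrac12 H^3+1$ already lies below $\chi(\cO_X(H))$.

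The genuine gap is $X_{6,6}\subset\PP(1^2,2^2,3^2)$. Your auxiliary surface $S'=\{F_6=0\}\subset\PP(1,1,3,3)$ meets the line $\{x_0=x_1=0\}$ of $\tfrac13(1,1)$ quotient singularities in two points, and these are \emph{not} Gorenstein (the local ring $\C[u,v]^{\Z/3}$ with diagonal action has non-free canonical module), so Proposition~\ref{prop:delpezzo} does not apply --- indeed, the fractional self-intersection $(-K_{S'})^2=\tfrac83$ already signals this. The paper instead observes that $4H$ is very ample on $X_{6,6}$, so for $C\in|2H_S|$ with $S\in|2H|$ one has $\omega_C=\cO_C(4H_C)$ very ample; hence $C$ is not hyperelliptic, and the Clifford bound of Lemma~\ref{lem:mu/2-bound} gives $\bn_C\leq\tfrac12\cdot 8+1-\tfrac12=\tfrac92<5=\chi(\cO_X(2H))$. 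No auxiliary surface is needed for this family.
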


\begin{proof}
According to \cite[Theorem 4.5]{chen:on-quasismooth-wci}, such $X$ falls into $13$ classes of complete intersections in weighted projective spaces. In the following, we denote by $X_{d_1,\dots,d_n}$ a general complete intersection of multidegree $(d_1,\cdots,d_n)$ in a fixed weighted projective space. If $X$ is smooth, then the restriction $H$ of the hyperplane class on the ambient weighted projective space is an ample divisor and generates $\Pic(X)$. We divide the proof into several cases as follows.

\begin{itemize}
   \item $X_5\subset \PP^4, X_{2,4}, X_{3,3}\subset \PP^5, X_{2,2,3}\subset \PP^6$, and $X_{2,2,2,2}\subset \PP^7$. For these examples, the result follows from Theorem \ref{thm:cicy-in-fano}.

    \item $X_8\subset \PP(1^4, 4)$, $X_6\subset \PP(1^4,2)$, $X_{3,4}\subset \PP(1^5,2)$, $X_{2,6}\subset \PP(1^5,3)$, and $X_{4,4}\subset \PP(1^4,2^2)$. For these threefolds, $H$ is basepoint-free. So the result can be deduced from Corollary \ref{cor:trivial-cor-basepoint-free}.

    \item $X_{10}\subset \PP(1^3,2,5)$. In this case, $2H$ is basepoint-free, and \ref{conj:bmt} holds for $(X_{10}, 2H)$ by Corollary \ref{cor:trivial-cor-basepoint-free}. Using Remark \ref{rmk:basepoint-free}, the result also holds for $(X_{10}, H)$.

    \item $X_{6,6}\subset \PP(1^2,2^2,3^2)$. In this case, $2H$ is basepoint-free and $4H$ is very ample. So \ref{conj:bmt} for $(X_{6,6}, 2H)$ follows from the second staement of Corollary \ref{cor:trivial-cor-basepoint-free}. By Remark \ref{rmk:basepoint-free}, this implies the result for $(X_{6, 6}, H)$.

    \item $X_{4,6}\subset \PP(1^3,2^2,3)$. We take general $S\in |\cO_X(2H)|$ and $C\in |\cO_S(2H_S)|$. Then $S=S_{4,6}\subset \PP(1^3,2,3)$ and $C=C_{4,6}\subset \PP(1^3,3)$ are both smooth since $2H$ is basepoint-free. Moreover, we have an embedding $C\hookrightarrow S_6\subset \PP(1^3,3),$ where $S_6$ is a smooth K3 surface and is a double cover of $\PP^2$ branched along a sextic curve. Then \ref{conj:bmt} for $(X_{4,6}, 2H)$ follows from Proposition \ref{prop:K3} and Theorem \ref{thm:main-criterion} as $\bn_C\leq \sqrt{48}$ and $\chi(\cO_X(2H))=8$. By Remark \ref{rmk:basepoint-free}, this implies the result for $(X_{4, 6}, H)$.
\end{itemize}
Now the proof is completed.
\end{proof}

Combining Theorem \ref{thm:hypergeo} with Remark \ref{rmk-etale} also establishes \ref{conj:bmt} for certain non-simply connected examples. 

\begin{Ex}\label{ex:quotient}
Let $(X, H)$ be any polarised Calabi--Yau threefold that we considered in this section. Suppose there is a finite group $G$ that acts freely on $X$ and $H$ is $G$-equivariant. Then by Remark \ref{rmk-etale}, \ref{BG3} as well as \ref{conj:bmt} hold for $(X', H')$, where $X'\coloneqq X/G$ and $H'$ is induced by $H$. In particular, this applies to the example with non-abelian fundamental group containing a rigid ample surface, constructed in \cite{beauville:cy3-nonabelian} as a finite free quotient of $X_{2,2,2,2}\subset \PP^7$.
\end{Ex}

\subsection{Cyclic covers of Fano threefolds}

Given an integer $d\geq 1$ and a smooth Fano threefold $Y$ with a smooth divisor $$B\in \left|-\frac{d}{d-1}K_Y\right|,$$ we can construct a degree $d$ cyclic cover $X\to Y$ branched along $B$ such that $X$ is a simply connected Calabi--Yau threefold. Note that in this case, $d-1$ divides $r$ and $|B|$ is always basepoint-free by \cite[Corollary 2 and 2*]{ein-lazarsfeld:global-generation} (see also \cite{kawamata:fujita-3-4fold}). Under some assumptions on $Y$, we can prove \ref{conj:bmt} for such $X$.

\begin{Thm}\label{thm:double-cover}
Let $X$ be a Calabi--Yau threefold with a cyclic cover $\pi\colon X\to Y$ to a smooth Fano threefold $Y$ of index $r$. Set $H\coloneqq \pi^*H_Y$. Assume that the branch divisor $B$ is general when $H_Y$ is not basepoint-free. Then \emph{\ref{conj:bmt}} holds for $(X, H)$ and some $1$-cycle $\Gamma$ with $\Gamma.H\geq 0$ if 

\begin{itemize}
\item $r\geq 2$,

\item $r=1$ and $Y$ has Picard number one, or

\item $r=1$ and $H_Y^3\leq 6$ such that $Y$ is not of Picard number $10$.
\end{itemize}

\end{Thm}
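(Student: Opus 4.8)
The plan is to verify the hypotheses of Theorem~\ref{thm:main-criterion}, i.e.\ to produce a smooth surface $S\in|H|$ and a smooth curve $C\in|H_S|$ with $\bn_C<\chi(\cO_X(H))$ (or the weaker bound in part (b) of that theorem, after rescaling $H$ via Remark~\ref{rmk:basepoint-free} when $|H_Y|$ has base points). Let $d$ be the degree of $\pi$ and $B\in|-\tfrac{d}{d-1}K_Y|$ its branch divisor, so $d-1\mid r$. For a general $T\in|H_Y|$ put $S:=\pi^{-1}(T)=\pi^*T\in|H|$. Since $K_T=(K_Y+H_Y)|_T=-(r-1)H_T$, the surface $T$ is a del~Pezzo surface (with at worst rational Gorenstein singularities) when $r\ge 2$ and a K3 surface when $r=1$; moreover $S\to T$ is again a degree-$d$ cyclic cover, branched along $B\cap T$, and one computes $K_S=H_S$, $\chi(\cO_S)=\chi(\cO_X(H))$, and $g(C)-1=H^3=d\,H_Y^3$ for every $C\in|H_S|$. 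A Hirzebruch--Riemann--Roch computation on $Y$ (using $c_2(Y).H_Y=24$ in the index-one case) gives $\chi(\cO_X(H))=\chi(\cO_Y(H_Y))$, or $\chi(\cO_Y(H_Y))+1$ precisely when $r=d-1$. When $H_Y$ is basepoint-free all varieties occurring below are smooth by Bertini; otherwise one uses that $B$ is general to keep $B\cap T$ and its further sections smooth. So everything reduces to bounding $\bn_C$ for a well-chosen $C\in|H_S|$, and I would treat this by one of the following two mechanisms.

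First, when $r=d-1$ (in particular for every double cover of an index-one Fano threefold) we have $L=-\tfrac{1}{d-1}K_Y=H_Y$, so the ramification divisor $R\subset X$ of $\pi$ lies in $|\pi^*L|=|H|$, and $C:=R\cap S\in|H_S|$ is carried isomorphically by $\pi$ to the smooth curve $B\cap T\in|dH_T|$ on the surface $T$ --- a del~Pezzo when $d\ge 3$ (so $r\ge2$), a K3 when $d=2$ (so $r=1$). Applying Proposition~\ref{prop:delpezzo} or Proposition~\ref{prop:K3} with $s=d$ then bounds $\bn_C$; in the K3 case this uses the divisibility assumption on $\Pic(T)$, which holds for a general $T$ by the Noether--Lefschetz theorem when $\rho(Y)=1$ and which one checks by hand in the short list of higher-Picard index-one Fano threefolds with $H_Y^3\le 6$ (falling back on the crude bound of Lemma~\ref{lem:weak-bound} or the Clifford bound of Lemma~\ref{lem:mu/2-bound} together with a gonality estimate where convenient). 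Comparing with $\chi(\cO_X(H))=\chi(\cO_Y(H_Y))+1$, the required inequality $\bn_C<\chi(\cO_X(H))$ then reduces to $c_2(Y).H_Y>0$, which is automatic for a Fano threefold.

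Second, in the remaining cases $r\ge 2$ with $r>d-1$, the surface $T$ is a del~Pezzo with $-K_T=(r-1)H_T$, and a general $C'\in|H_T|$ is a smooth curve with $2g(C')-2=(2-r)H_Y^3$, hence $g(C')\in\{0,1\}$; then $C:=\pi^*C'=\pi^{-1}(C')\in|H_S|$ is a degree-$d$ cyclic cover of $\PP^1$ or of an elliptic curve. Thus $C$ is hyperelliptic (resp.\ bielliptic, when $d=2$), or at least of small gonality, and $\bn_C$ is controlled by Lemma~\ref{lem:weak-bound}, Lemma~\ref{lemma:bnHyperelliptic} or Lemma~\ref{lemma:brillNoetherNumberbielliptic} once the low-genus exceptions are ruled out with the Castelnuovo--Severi inequality (Lemma~\ref{lem:cast-severi}): for instance a bielliptic curve of genus $\ge 4$ is never trigonal and never hyperelliptic. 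Checking $\bn_C<\chi(\cO_X(H))=\chi(\cO_Y(H_Y))$ is then a finite computation from the classification of Fano threefolds; several of these covers are in fact weighted complete intersection Calabi--Yau threefolds or already fall under Corollary~\ref{cor:trivial-cor-basepoint-free}, but the argument above handles them uniformly.

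The hard part is the index-one case with $\rho(Y)>1$: a general anticanonical K3 section $T$ then has Picard number $\ge 2$, and $H_T$ need not satisfy the divisibility hypothesis required by Proposition~\ref{prop:K3}. This is exactly why the theorem restricts to $\rho(Y)=1$ or to the short list with $H_Y^3\le 6$, and why the family with $\rho(Y)=10$ (namely $Y=\PP^1\times S$ with $S$ a degree-one del~Pezzo surface) is excluded: there $T$ is an elliptic K3 surface with $H_T\cdot(\text{fibre})=1$ while $H_T^2=6$, so the divisibility fails, and moreover the crude bound $\bn_C\le\tfrac12(g(C)-1)+1$ is already too weak since $c_2(Y).H_Y=24<6\,H_Y^3$. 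The rest --- smoothness bookkeeping when $|H_Y|$ is not basepoint-free, and the case-by-case numerical checks --- is routine.
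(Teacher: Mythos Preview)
Your overall plan is sound and close to the paper's: reduce to the criterion $\bn_C < \chi(\cO_X(H))$ of Theorem~\ref{thm:main-criterion}(a), produce smooth $S\in|H|$ and $C\in|H_S|$ by pulling back from $Y$ (or using the ramification divisor when $r=d-1$), and bound $\bn_C$ with the del~Pezzo/K3 wall-crossing propositions or with Lemmas~\ref{lem:weak-bound} and~\ref{lem:mu/2-bound}. Your dichotomy $r=d-1$ versus $r>d-1$ is a reasonable reorganization of the paper's case-by-$r$ analysis, and your $\chi$-formula is correct (indeed finer than the paper's uniform ``$\chi=4$'' for $r=4$, which is off by one when $d=5$, though harmlessly so).

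There is one concrete slip. In your first mechanism you write ``apply Proposition~\ref{prop:delpezzo} \dots\ with $s=d$'', but that proposition is stated for the anticanonical polarisation $-K_T=(r-1)H_T$, so the curve $B\cap T\in|dH_T|$ lies in $|s(-K_T)|$ with $s=d/(r-1)$, not $s=d$. This equals $d$ only when $r=2$ (the case $(r,d)=(2,3)$, which matches the paper exactly). For $(r,d)=(3,4)$ one gets $s=2$, which is even and not covered by Proposition~\ref{prop:delpezzo} as stated; for $(r,d)=(4,5)$ one gets $s=5/3$, which is not an integer at all. The paper simply uses Lemma~\ref{lem:weak-bound} for all of $r\ge 3$ (and for $(r,d)=(2,2)$ as well), which is easier than your bielliptic route and sidesteps the issue entirely.

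A second, softer gap is the higher-Picard $r=1$ case with $H_Y^3=6$. Your first-mechanism curve $C\cong B\cap T$ sits inside the K3 surface $T$ with no evident low-degree cover, so there is no gonality bound available to feed into Lemma~\ref{lem:mu/2-bound}; and the divisibility hypothesis of Proposition~\ref{prop:K3} typically fails once $\rho(T)\ge 2$, so ``check by hand'' will not succeed. The paper instead takes $C=\pi^{-1}(C')$ for $C'\in|H_T|$ (your ``second-mechanism'' choice), so that $C\to C'$ is a genuine double cover of a genus-$4$ curve and Castelnuovo--Severi forces $\gon(C)\ge 4$; then Lemma~\ref{lem:mu/2-bound} gives $\bn_C\le 6<7=\chi(\cO_X(H))$. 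Switching to that choice of $C$ for this case completes your argument.
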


\begin{proof}
We know $$ S\coloneq  (\pi^{*}(B))_{\mathrm{red}}\cong B\in |H|$$ is a smooth surface. Also by \cite[Theorem 2.3.1]{shafarevich:fano-varieties}, a general surface $S'\in |H_Y|$ is smooth. Since $|B|$ is basepoint-free, under the assumption that $B$ is general if $H_Y$ is not basepoint-free, we may assume that $C=B\cap S'$ is smooth. Therefore, $$C\cong (\pi^{-1}(C))_{\mathrm{red}}\in |H_S|$$ is a smooth curve. In the following, we apply Theorem \ref{thm:main-criterion}(a) to the pair $(S, C)$, reducing the problem to verifying \eqref{eq:thm-bn-smooth} for $C$.

By \cite[Corollary 2.1.14]{shafarevich:fano-varieties} and \cite[Remark 4.1.7]{lazarsfeld:positivity-I}, we have:
\begin{itemize}
    \item If $r>1$ and $d-1 <r$, then $\chi(\cO_X(H)) = \chi(\cO_Y(H_Y)) = \frac{1}{2}H_Y^3r+2$.
    \item If $r>1$ and $d-1 =r$, then $\chi(\cO_X(H)) = \chi(\cO_Y(H_Y)) +1= \frac{1}{2}H_Y^3r+3$.
    \item If $r=1$, then $\chi(\cO_X(H)) = \chi(\cO_Y(H_Y)) +1= \frac{1}{2}H_Y^3+4$. 
\end{itemize}
On the other hand, Lemma \ref{lem:weak-bound} implies 
$$
\bn_C \leq \frac{g(C)-1}{2} +1 = \frac{dH_Y^3}{2} +1. 
$$
This proves \eqref{eq:thm-bn-smooth} in the cases 
$(r, H^3_Y) = (4,1), (3,2), (1, \leq 4)$, as well as in the case $(r, d) = (2,2)$. Next, we treat the remaining cases.

When $(r,d)=(2,3)$, by the construction above, $C$ is isomorphic to a complete intersection of a general smooth divisor $S'\in |H_Y|$ with $B\in |3H_Y|$. Hence, $C$ can be embedded into a smooth del pezzo surface $S'$ as a divisor in $|3H_{S'}|=|-3K_{S'}|$. Applying Proposition \ref{prop:delpezzo} to $C\hookrightarrow S'$, we get $\bn_C\leq \max\left\{1+H_Y^2, 3\right\},$ which implies \eqref{eq:thm-bn-smooth}.

When $(r, H_Y^3)=(1, \geq 6)$ and $Y$ has Picard number one, $C\in |H_S|$ is isomorphic to a complete intersection of a general smooth divisor $S'\in |H_Y|$ with $B\in |2H_Y|$. So $C$ can be embedded into a smooth K3 surface $S'$ as a divisor in $|2H_{S'}|$. It is known that $H_Y$ is very ample for such $Y$ by \cite[Proposition 4.1.11]{shafarevich:fano-varieties}. Therefore, by Noether--Lefschetz theorem and taking $S'$ to be very general, we have $\Pic(S')=\Z H_{S'}$. Hence, we can apply Proposition \ref{prop:K3} to $C\hookrightarrow S'$ and obtain \eqref{eq:thm-bn-smooth}.

Finally, when $(r, H_Y^3)=(1,6)$ and $Y$ is not of Picard number $10$, we know that $|H_Y|$ is basepoint-free \cite[Theorem 2.4.5]{shafarevich:fano-varieties}. Therefore, we can alternatively take $S$ and $C$ to be the pullback of a general surface $S''\in |H_Y|$ and a general curve $C''\in |H_{S''}|$, respectively. Hence, $g(C)=13$ and $g(C'')=4$ with a double cover $C\to C''$. By Lemma \ref{lem:cast-severi}, we see $\gon(C)\geq 4$, which implies $\mathrm{Cliff}(C)\geq 2$. Now Lemma \ref{lem:mu/2-bound} gives $\bn_C\leq H_Y^3=6$ and \eqref{eq:thm-bn-smooth} follows.
\end{proof}

Although we expect that a version of the stronger BG inequality as \ref{BG3} holds for any simply connected Calabi--Yau threefolds with a suitable polarisation, we do not expect \eqref{eq:thm-bn-smooth} to be true in full generality.

\begin{Ex} \label{ex:pathology}
When a Fano threefold $Y$ has Picard number $10$, it is of the form $S_1\times \PP^1$, where $S_1$ is a del Pezzo surface of degree $1$. Let $\pi\colon X\to Y$ be a double cover branched along a general divisor $B\in |-2K_Y|$. In this case, we claim that the choice of $H_Y=-K_Y$ and $H=\pi^*H_Y$ does not satisfy the inequality \eqref{eq:thm-bn-smooth} in Theorem \ref{thm:main-criterion}. Indeed, we have $$\chi(\cO_X(H))=\chi(\cO_Y(H_Y))+1=7$$ as above. Since $-2K_Y$ is basepoint-free and $B$ is general, the same argument as the case $(r, H_Y^3)=(1,4)$ in Theorem \ref{thm:cicy-in-fano} shows that we can choose a smooth surface $S\in |H|$ and a smooth curve $C\in |H_S|$. However, the natural map $f\colon C\to S_1\times \PP^1\to \PP^1$ is of degree $2$ by the computation of intersection numbers. Therefore, Lemma \ref{lemma:bnHyperelliptic} implies $\bn_C=7=\chi(\cO_X(H))$.
\end{Ex}

It is clear that the failure of \eqref{eq:thm-bn-smooth} in the above example comes from the existence of a morphism $C\to \PP^1$ of degree two (cf.~Lemma \ref{lemma:bnHyperelliptic}). Therefore, we expect that the pathological phenomenon described above is exceptional and will not happen when $H$ is very ample.

\begin{Con}\label{conj:bnC}
Let $(X, H)$ be a polarised Calabi--Yau threefold with $H$ very ample and $\Pic(X)=\Z H$. Then there exists a smooth surface $S \in |H|$ and a smooth curve $C \in |H_S|$ such that
\[\bn_C < \chi(\cO_X(H)).\]
\end{Con}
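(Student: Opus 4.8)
The plan is to prove the conjectural bound $\bn_C<\chi(\cO_X(H))$ by embedding the curve $C$ into the surface $S$ and running the tilt-stability wall-crossing machinery of Propositions~\ref{prop:delpezzo}--\ref{prop:K3} on $S$. Since $H$ is very ample, Bertini already gives smooth $S\in|H|$ and (as $H_S$ is then very ample) smooth $C\in|H_S|$, so the entire content is the numerical inequality. First I would record the geometry: by adjunction $K_S=H_S$ is ample, so $S$ is a minimal surface of general type with $\omega_S=\cO_S(H_S)$, $K_C=2H_C$, $g(C)-1=H_S^2=H^3$, and $\chi(\cO_S)=\chi(\cO_X(H))$; moreover, using $X\hookrightarrow\PP^{\,\chi(\cO_X(H))-1}$, the curve $C$ is a codimension-two linear section, hence a nondegenerate curve of degree $g-1$ embedded by the complete linear system $|\cO_C(H_C)|$, and $h^0(\cO_C(H_C))=\chi(\cO_X(H))-2$. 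Combining this with Lemma~\ref{eq:lower-bound-bn}, the conjecture asserts that the true value of $\bn_C$ lies in $\{\chi(\cO_X(H))-2,\chi(\cO_X(H))-1\}$; equivalently, via Serre duality (Remark~\ref{rmk:left-limit}), it suffices to show
\[
h^0(E)\ \le\ \big(\chi(\cO_X(H))-1\big)\,\rk(E)
\]
for every stable sheaf $E$ on $C$ with $\mu(E)$ in a sufficiently small neighbourhood of $g-1$ — i.e. that higher-rank stable bundles of slope $g-1$ beat the best line bundle of degree $g-1$ by at most one section per unit rank.

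To establish this inequality I would mimic the proofs of Propositions~\ref{prop:delpezzo} and \ref{prop:K3}. Push forward along $\iota\colon C\hookrightarrow S$ (here $C\in|H_S|$, so the parameter ``$s$'' is $1$): then $\iota_*E\in\Coh^0(S)$ with $\nu_{BN}(\iota_*E)=\tfrac{\mu(E)}{H^3}-\tfrac12$ close to $\tfrac12$, and $h^0(E)=\mathrm{hom}_S(\cO_S,\iota_*E)$. Take the Harder--Narasimhan filtration of $\iota_*E$ with respect to $\nu_{BN}$ near the origin (Lemma~\ref{lem:HN-BN}); bound the extreme BN-slopes $\nu^{\pm}_{BN}(\iota_*E)$ using Lemma~\ref{lem:bound-wall} (which forces $\mu_H(F_1/T(F_1))-\mu_H(\cH^{-1}(F_2))\le 1$ across a wall); bound $h^0$ of each BN-semistable factor by a ``number-of-sections'' estimate on $S$; and sum up the contributions through a convexity argument as in Lemma~\ref{lem:convex-dp}. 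Carrying this out reduces the conjecture to two surface-level inputs on $(S,H_S)$, exactly the analogues of Lemmas~\ref{lem:k3-sbg}/\ref{lem:h^0-bn-k3} (and their del Pezzo counterparts): (a) a strong Bogomolov--Gieseker bound $\ch_2(F)\le\Phi_S(\mu_H(F))\,\rk(F)H_S^2$ for $H_S$-stable $F$, with $\Phi_S$ lying strictly below the Bogomolov parabola near $\mu_H=0$; and (b) $h^0(F)\le\rk(F)+\Psi_S(\ch_2(F),\ch_1(F).H_S)$ for BN-semistable $F\in\Coh^0(S)$.

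The step I expect to be the main obstacle is precisely the provision of (a) and (b) on a minimal surface of general type. On a del Pezzo surface (b) is free from $\chi(\cO_S)=1$, and on a K3 from $\chi(F,F)\le 2$ plus Serre duality; for $S$ with $\omega_S=\cO_S(H_S)$ the naive substitute via Hirzebruch--Riemann--Roch and Kodaira vanishing gives $\mathrm{hom}_S(\cO_S,F)\le\chi(\cO_S)\rk(F)+(\text{discriminant terms})$ with $\chi(\cO_S)=p_g(S)+1$, whose error of order $\rk(F)^2\chi(\cO_S)$ swamps the target unless a genuine strong-BG improvement on $S$ is already known — and such an improvement is essentially a Mercat/Clifford-type assertion, which fails for arbitrary curves and surfaces. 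Crucially, in all the cases the paper can actually treat one embeds $C$ into an \emph{auxiliary} del Pezzo or K3 surface (not the general-type section $S$); for a general $X$ with $\Pic(X)=\Z H$ no such auxiliary surface exists, which is exactly why the statement is only conjectural, and Example~\ref{ex:pathology} shows it can fail once $\Pic$ is large. The only plausible source of the missing rigidity is that $C$ is a complete codimension-two linear section of $X\subset\PP^N$ with $\Pic(X)=\Z H$, which should keep $C$ far from Brill--Noether-pathological behaviour — for instance, a Castelnuovo--Severi argument as in Lemma~\ref{lem:cast-severi} applied to projections of $C$, together with $C$ lying on the $H_S$-polarised $S$, ought to force the gonality and Clifford index of $C$ to grow linearly in $g$. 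Converting this positivity into a strong-BG inequality on $S$ (directly, or via Lazarsfeld--Mukai-type bundles on general-type surfaces, or by restricting an a priori inequality from $X$ without becoming circular with \ref{BG3}, or via a Bridgeland-stability wall-crossing on $X$ that uses $\Pic(X)=\Z H$ to exclude destabilizers) is where the real work lies. A sensible first milestone is the analogue of Lemma~\ref{lem:mu/2-bound} with a constant of order $g$, say $\bn_C\le(1-c)\tfrac{g-1}{2}+O(1)$ for a fixed $c>0$, which would already settle the conjecture whenever $c_2(X).H$ is not too small, leaving the small-$c_2(X).H$ range — where $\cO_C(H_C)$ is ``canonically positive'' in the strongest sense — to be handled by the surface wall-crossing above.
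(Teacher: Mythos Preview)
The statement you are addressing is Conjecture~\ref{conj:bnC}, which the paper explicitly leaves open; there is no proof in the paper to compare your proposal against. The paper offers only supporting evidence (Theorems~\ref{thm:cicy-in-fano}, \ref{thm:hypergeo}, \ref{thm:double-cover}) together with the lower bound of Lemma~\ref{eq:lower-bound-bn} and the cautionary Example~\ref{ex:pathology}, and then states the conjecture as a plausible expectation under the hypotheses $H$ very ample and $\Pic(X)=\Z H$.

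Your write-up is not a proof but a research outline, and you yourself identify the genuine gap: the wall-crossing scheme of Propositions~\ref{prop:delpezzo}--\ref{prop:K3} requires, as input on the ambient surface, both a strong Bogomolov--Gieseker bound (your item (a)) and a sections estimate for BN-semistable objects (your item (b)). For the surface $S\in|H|$ in question one has $K_S=H_S$ ample, so $S$ is of general type, and neither (a) nor (b) is known in that generality; indeed, (a) is essentially equivalent to \hyperref[BGn]{$\mathbf{BG_2(\epsilon)}$} for $(S,H_S)$, which by Proposition~\ref{prop:curve-to-surface-smooth} is precisely what the desired bound on $\bn_C$ is meant to establish. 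In every case the paper actually settles, the curve $C$ is embedded into an \emph{auxiliary} del Pezzo or K3 surface where (a) and (b) are available---not into the general-type section $S$---and you correctly note that for arbitrary $X$ with $\Pic(X)=\Z H$ no such auxiliary surface need exist. Your suggested substitutes (Castelnuovo--Severi for gonality growth, Lazarsfeld--Mukai bundles on general-type surfaces, or a non-circular wall-crossing on $X$) are reasonable directions, but none of them is carried out, so the proposal does not close the gap.
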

The geometry of $X$ and $S$ may come into the study of Conjecture \ref{conj:bnC}. For example, the Jacobian of $C$ has no non-trivial automorphisms and $\mathrm{Aut}(C)=\{\mathrm{id}_C\}$ when $C$ is generic in $|H_S|$, which may lead to a strong restriction on $\bn_C$.

\bibliography{main}                     

@article {eisenbud:det-eq,
    AUTHOR = {Eisenbud, David and Koh, Jee and Stillman, Michael},
     TITLE = {Determinantal equations for curves of high degree},
   JOURNAL = {Amer. J. Math.},
  FJOURNAL = {American Journal of Mathematics},
    VOLUME = {110},
      YEAR = {1988},
    NUMBER = {3},
     PAGES = {513--539},
      ISSN = {0002-9327,1080-6377},
   MRCLASS = {14H45},
  MRNUMBER = {944326},
MRREVIEWER = {Ciro\ Ciliberto},
       DOI = {10.2307/2374621},
       URL = {https://doi.org/10.2307/2374621},
}

@article {hartshorne:divisor-on-gorenstein-curve,
    AUTHOR = {Hartshorne, Robin},
     TITLE = {Generalized divisors on {G}orenstein curves and a theorem of
              {N}oether},
   JOURNAL = {J. Math. Kyoto Univ.},
  FJOURNAL = {Journal of Mathematics of Kyoto University},
    VOLUME = {26},
      YEAR = {1986},
    NUMBER = {3},
     PAGES = {375--386},
      ISSN = {0023-608X},
   MRCLASS = {14H45 (14C20 14M05)},
  MRNUMBER = {857224},
MRREVIEWER = {Ciro\ Ciliberto},
       DOI = {10.1215/kjm/1250520873},
       URL = {https://doi.org/10.1215/kjm/1250520873},
}

@article {bridgeland:K3-surfaces,
    AUTHOR = {Bridgeland, Tom},
     TITLE = {Stability conditions on {$K3$} surfaces},
   JOURNAL = {Duke Math. J.},
  FJOURNAL = {Duke Mathematical Journal},
    VOLUME = {141},
      YEAR = {2008},
    NUMBER = {2},
     PAGES = {241--291},
      ISSN = {0012-7094},
     CODEN = {DUMJAO},
   MRCLASS = {14F05 (14J28 18E30)},
  MRNUMBER = {2376815},
MRREVIEWER = {Andrei D. Halanay},
       DOI = {10.1215/S0012-7094-08-14122-5},
       URL = {http://dx.doi.org/10.1215/S0012-7094-08-14122-5},
}

@article {bridgeland:stability-condition-on-triangulated-category,
    AUTHOR = {Bridgeland, Tom},
     TITLE = {Stability conditions on triangulated categories},
   JOURNAL = {Ann. of Math. (2)},
  FJOURNAL = {Annals of Mathematics. Second Series},
    VOLUME = {166},
      YEAR = {2007},
    NUMBER = {2},
     PAGES = {317--345},
      ISSN = {0003-486X,1939-8980},
   MRCLASS = {14F05 (18E30)},
  MRNUMBER = {2373143},
MRREVIEWER = {Leovigildo\ M.\ Alonso Tarrio},
       DOI = {10.4007/annals.2007.166.317},
       URL = {https://doi.org/10.4007/annals.2007.166.317},
}

@article {brambila:nonempty,
    AUTHOR = {Brambila-Paz, L. and Mercat, V. and Newstead, P. E. and Ongay,
              F.},
     TITLE = {Nonemptiness of {B}rill--{N}oether loci},
   JOURNAL = {Internat. J. Math.},
  FJOURNAL = {International Journal of Mathematics},
    VOLUME = {11},
      YEAR = {2000},
    NUMBER = {6},
     PAGES = {737--760},
      ISSN = {0129-167X,1793-6519},
   MRCLASS = {14H51 (14H60)},
  MRNUMBER = {1785516},
MRREVIEWER = {Montserrat\ Teixidor i Bigas},
       DOI = {10.1142/S0129167X00000350},
       URL = {https://doi.org/10.1142/S0129167X00000350},
}

@article {langer:normal-surface,
    AUTHOR = {Langer, Adrian},
     TITLE = {Bridgeland stability conditions on normal surfaces},
   JOURNAL = {Ann. Mat. Pura Appl. (4)},
  FJOURNAL = {Annali di Matematica Pura ed Applicata. Series IV},
    VOLUME = {203},
      YEAR = {2024},
    NUMBER = {6},
     PAGES = {2653--2664},
      ISSN = {0373-3114,1618-1891},
   MRCLASS = {14F08 (14C40 14J17 14J60)},
  MRNUMBER = {4813221},
       DOI = {10.1007/s10231-024-01460-0},
       URL = {https://doi.org/10.1007/s10231-024-01460-0},
}

@article {mercat:clifford-theorem,
    AUTHOR = {Mercat, Vincent},
     TITLE = {Clifford's theorem and higher rank vector bundles},
   JOURNAL = {Internat. J. Math.},
  FJOURNAL = {International Journal of Mathematics},
    VOLUME = {13},
      YEAR = {2002},
    NUMBER = {7},
     PAGES = {785--796},
      ISSN = {0129-167X,1793-6519},
   MRCLASS = {14H60},
  MRNUMBER = {1921510},
MRREVIEWER = {Holger\ Brenner},
       DOI = {10.1142/S0129167X02001484},
       URL = {https://doi.org/10.1142/S0129167X02001484},
}

@incollection {bayer:brill-noether,
    AUTHOR = {Bayer, Arend},
     TITLE = {Wall-crossing implies {B}rill--{N}oether: applications of
              stability conditions on surfaces},
 BOOKTITLE = {Algebraic geometry: {S}alt {L}ake {C}ity 2015},
    SERIES = {Proc. Sympos. Pure Math.},
    VOLUME = {97.1},
     PAGES = {3--27},
 PUBLISHER = {Amer. Math. Soc., Providence, RI},
      YEAR = {2018},
      ISBN = {978-1-4704-3577-6},
   MRCLASS = {14F05 (14J28 14J60)},
  MRNUMBER = {3821144},
MRREVIEWER = {Rebecca\ Tramel},
       DOI = {10.1090/pspum/097.1/01668},
       URL = {https://doi.org/10.1090/pspum/097.1/01668},
}

@book {huybrechts:geometry-of-moduli-space-of-sheaves,
    AUTHOR = {Huybrechts, Daniel and Lehn, Manfred},
     TITLE = {The geometry of moduli spaces of sheaves},
    SERIES = {Aspects of Mathematics},
    VOLUME = {E31},
 PUBLISHER = {Friedr. Vieweg \& Sohn, Braunschweig},
      YEAR = {1997},
     PAGES = {xiv+269},
      ISBN = {3-528-06907-4},
   MRCLASS = {14D20 (14F05)},
  MRNUMBER = {1450870},
MRREVIEWER = {Jean-Marc\ Dr\'ezet},
       DOI = {10.1007/978-3-663-11624-0},
       URL = {https://doi.org/10.1007/978-3-663-11624-0},
}

@article {feyz:physics-abelian-dt,
    AUTHOR = {Alexandrov, Sergei and Feyzbakhsh, Soheyla and Klemm, Albrecht
              and Pioline, Boris and Schimannek, Thorsten},
     TITLE = {Quantum geometry, stability and modularity},
   JOURNAL = {Commun. Number Theory Phys.},
  FJOURNAL = {Communications in Number Theory and Physics},
    VOLUME = {18},
      YEAR = {2024},
    NUMBER = {1},
     PAGES = {49--151},
      ISSN = {1931-4523,1931-4531},
   MRCLASS = {14N35 (11F37 14J32 81T30)},
  MRNUMBER = {4761642},
       DOI = {10.4310/cntp.2024.v18.n1.a2},
       URL = {https://doi.org/10.4310/cntp.2024.v18.n1.a2},
}

@article {feyz:mukai-program,
    AUTHOR = {Feyzbakhsh, Soheyla},
     TITLE = {Mukai's program (reconstructing a {K}3 surface from a curve)
              via wall-crossing},
   JOURNAL = {J. Reine Angew. Math.},
  FJOURNAL = {Journal f\"ur die Reine und Angewandte Mathematik. [Crelle's
              Journal]},
    VOLUME = {765},
      YEAR = {2020},
     PAGES = {101--137},
      ISSN = {0075-4102,1435-5345},
   MRCLASS = {14F08 (14J28)},
  MRNUMBER = {4129357},
MRREVIEWER = {Pieter\ Belmans},
       DOI = {10.1515/crelle-2019-0025},
       URL = {https://doi.org/10.1515/crelle-2019-0025},
}

@article {feyz-li:clifford-indices,
    AUTHOR = {Feyzbakhsh, Soheyla and Li, Chunyi},
     TITLE = {Higher rank {C}lifford indices of curves on a {K}3 surface},
   JOURNAL = {Selecta Math. (N.S.)},
  FJOURNAL = {Selecta Mathematica. New Series},
    VOLUME = {27},
      YEAR = {2021},
    NUMBER = {3},
     PAGES = {Paper No. 48, 34},
      ISSN = {1022-1824,1420-9020},
   MRCLASS = {14F08 (14H50 14J28)},
  MRNUMBER = {4273645},
MRREVIEWER = {C\'esar\ Lozano Huerta},
       DOI = {10.1007/s00029-021-00664-z},
       URL = {https://doi.org/10.1007/s00029-021-00664-z},
}

@incollection {bigas:brill-noether-for-stable-vector-bundle,
	AUTHOR = {Grzegorczyk, I. and Teixidor i Bigas, M.},
	TITLE = {Brill--{N}oether theory for stable vector bundles},
	BOOKTITLE = {Moduli spaces and vector bundles},
	SERIES = {London Math. Soc. Lecture Note Ser.},
	VOLUME = {359},
	PAGES = {29--50},
	PUBLISHER = {Cambridge Univ. Press, Cambridge},
	YEAR = {2009},
	MRCLASS = {14H60 (14H51)},
	MRNUMBER = {2537065},
	MRREVIEWER = {Concettina Galati},
}

@article {newstead:geography-of-brill-noethr-loci,
    AUTHOR = {Brambila-Paz, L. and Grzegorczyk, I. and Newstead, P. E.},
     TITLE = {Geography of {B}rill--{N}oether loci for small slopes},
   JOURNAL = {J. Algebraic Geom.},
  FJOURNAL = {Journal of Algebraic Geometry},
    VOLUME = {6},
      YEAR = {1997},
    NUMBER = {4},
     PAGES = {645--669},
      ISSN = {1056-3911,1534-7486},
   MRCLASS = {14D20 (14H60)},
  MRNUMBER = {1487229},
MRREVIEWER = {Tohru\ Nakashima},
}

@article {mercat:slope-smaller-2,
    AUTHOR = {Mercat, Vincent},
     TITLE = {Le probl\`eme de {B}rill--{N}oether pour des fibr\'es stables
              de petite pente},
   JOURNAL = {J. Reine Angew. Math.},
  FJOURNAL = {Journal f\"ur die Reine und Angewandte Mathematik. [Crelle's
              Journal]},
    VOLUME = {506},
      YEAR = {1999},
     PAGES = {1--41},
      ISSN = {0075-4102,1435-5345},
   MRCLASS = {14D20 (14H60)},
  MRNUMBER = {1665673},
MRREVIEWER = {Montserrat\ Teixidor i Bigas},
       DOI = {10.1515/crll.1999.005},
       URL = {https://doi.org/10.1515/crll.1999.005},
}

@article {ein-lazarsfeld:global-generation,
    AUTHOR = {Ein, Lawrence and Lazarsfeld, Robert},
     TITLE = {Global generation of pluricanonical and adjoint linear series
              on smooth projective threefolds},
   JOURNAL = {J. Amer. Math. Soc.},
  FJOURNAL = {Journal of the American Mathematical Society},
    VOLUME = {6},
      YEAR = {1993},
    NUMBER = {4},
     PAGES = {875--903},
      ISSN = {0894-0347,1088-6834},
   MRCLASS = {14E30 (14C20 14E35)},
  MRNUMBER = {1207013},
MRREVIEWER = {Alessio\ Corti},
       DOI = {10.2307/2152744},
       URL = {https://doi.org/10.2307/2152744},
}

@Misc{flm:degeneration,
  Title                    = {{Generalized Bogomolov--Gieseker inequalities for threefolds and degeneration}},
  Author                   = {Soheyla Feyzbakhsh and Zhiyu Liu and Tianle Mao},
  Year                     = {2025},
  Eprint                   = {In preparation}
}

@incollection {bertram:stable-rank-two-vector-bundles,
	AUTHOR = {Bertram, A. and Feinberg, B.},
	TITLE = {On stable rank two bundles with canonical determinant and many
	sections},
	BOOKTITLE = {Algebraic geometry ({C}atania, 1993/{B}arcelona, 1994)},
	SERIES = {Lecture Notes in Pure and Appl. Math.},
	VOLUME = {200},
	PAGES = {259--269},
	PUBLISHER = {Dekker, New York},
	YEAR = {1998},
	MRCLASS = {14D20 (14H60)},
	MRNUMBER = {1651099},
	MRREVIEWER = {Xiao Jiang Tan},
}

@article {bayer:the-space-of-stability-conditions-on-abelian-threefolds,
    AUTHOR = {Bayer, Arend and Macr\`i, Emanuele and Stellari, Paolo},
     TITLE = {The space of stability conditions on abelian threefolds, and
              on some {C}alabi--{Y}au threefolds},
   JOURNAL = {Invent. Math.},
  FJOURNAL = {Inventiones Mathematicae},
    VOLUME = {206},
      YEAR = {2016},
    NUMBER = {3},
     PAGES = {869--933},
      ISSN = {0020-9910,1432-1297},
   MRCLASS = {14F05 (14J32 14K05 18E30)},
  MRNUMBER = {3573975},
MRREVIEWER = {Colin\ Diemer},
       DOI = {10.1007/s00222-016-0665-5},
       URL = {https://doi.org/10.1007/s00222-016-0665-5},
}

@article {bayer:bridgeland-stability-conditions-on-threefolds,
    AUTHOR = {Bayer, Arend and Macr\`i, Emanuele and Toda, Yukinobu},
     TITLE = {Bridgeland stability conditions on threefolds {I}:
              {B}ogomolov--{G}ieseker type inequalities},
   JOURNAL = {J. Algebraic Geom.},
  FJOURNAL = {Journal of Algebraic Geometry},
    VOLUME = {23},
      YEAR = {2014},
    NUMBER = {1},
     PAGES = {117--163},
      ISSN = {1056-3911,1534-7486},
   MRCLASS = {14F05 (14J30)},
  MRNUMBER = {3121850},
MRREVIEWER = {Adrian\ Langer},
       DOI = {10.1090/S1056-3911-2013-00617-7},
       URL = {https://doi.org/10.1090/S1056-3911-2013-00617-7},
}

@article {chunyi:stability-condition-quintic-threefold,
    AUTHOR = {Li, Chunyi},
     TITLE = {On stability conditions for the quintic threefold},
   JOURNAL = {Invent. Math.},
  FJOURNAL = {Inventiones Mathematicae},
    VOLUME = {218},
      YEAR = {2019},
    NUMBER = {1},
     PAGES = {301--340},
      ISSN = {0020-9910,1432-1297},
   MRCLASS = {14F05 (14J32 18E30)},
  MRNUMBER = {3994590},
MRREVIEWER = {Benjamin\ Schmidt},
       DOI = {10.1007/s00222-019-00888-z},
       URL = {https://doi.org/10.1007/s00222-019-00888-z},
}

@article {bayer:stability-conditions-in-families,
	AUTHOR = {Bayer, A. and Lahoz, M. and Macr\`\i , E. and Nuer,
	H. and Perry, A. and Stellari, P.},
	TITLE = {Stability conditions in families},
	JOURNAL = {Publ. Math. Inst. Hautes \'{E}tudes Sci.},
	FJOURNAL = {Publications Math\'{e}matiques. Institut de Hautes \'{E}tudes
	Scientifiques},
	VOLUME = {133},
	YEAR = {2021},
	PAGES = {157--325},
	ISSN = {0073-8301},
	MRCLASS = {14F08 (14J42)},
	MRNUMBER = {4292740},
	MRREVIEWER = {Hao Max Sun},
	DOI = {10.1007/s10240-021-00124-6},
	URL = {https://doi.org/10.1007/s10240-021-00124-6},
}

@article {mella:good-divisor,
    AUTHOR = {Mella, Massimiliano},
     TITLE = {Existence of good divisors on {M}ukai varieties},
   JOURNAL = {J. Algebraic Geom.},
  FJOURNAL = {Journal of Algebraic Geometry},
    VOLUME = {8},
      YEAR = {1999},
    NUMBER = {2},
     PAGES = {197--206},
      ISSN = {1056-3911,1534-7486},
   MRCLASS = {14J45 (14C20)},
  MRNUMBER = {1675146},
MRREVIEWER = {Jaros\l aw\ A.\ Wi\'sniewski},
}

@book {shafarevich:fano-varieties,
     TITLE = {Algebraic geometry. {V}},
    SERIES = {Encyclopaedia of Mathematical Sciences},
    VOLUME = {47},
    EDITOR = {Parshin, A. N. and Shafarevich, I. R.},
      NOTE = {Fano varieties,
              A translation of {\it Algebraic geometry. 5} (Russian), Ross.
              Akad. Nauk, Vseross. Inst. Nauchn. i Tekhn. Inform., Moscow},
 PUBLISHER = {Springer-Verlag, Berlin},
      YEAR = {1999},
     PAGES = {iv+247},
      ISBN = {3-540-61468-0},
   MRCLASS = {14J45},
  MRNUMBER = {1668575},
MRREVIEWER = {Takao\ Fujita},
}

@book {fulton:intersection-theory,
    AUTHOR = {Fulton, William},
     TITLE = {Intersection theory},
    SERIES = {Ergebnisse der Mathematik und ihrer Grenzgebiete. 3. Folge. A
              Series of Modern Surveys in Mathematics [Results in
              Mathematics and Related Areas. 3rd Series. A Series of Modern
              Surveys in Mathematics]},
    VOLUME = {2},
   EDITION = {Second},
 PUBLISHER = {Springer-Verlag, Berlin},
      YEAR = {1998},
     PAGES = {xiv+470},
      ISBN = {3-540-62046-X; 0-387-98549-2},
   MRCLASS = {14C17 (14-02)},
  MRNUMBER = {1644323},
       DOI = {10.1007/978-1-4612-1700-8},
       URL = {https://doi.org/10.1007/978-1-4612-1700-8},
}

@article {macri:projective-space,
    AUTHOR = {Macr\`i, Emanuele},
     TITLE = {A generalized {B}ogomolov--{G}ieseker inequality for the
              three-dimensional projective space},
   JOURNAL = {Algebra Number Theory},
  FJOURNAL = {Algebra \& Number Theory},
    VOLUME = {8},
      YEAR = {2014},
    NUMBER = {1},
     PAGES = {173--190},
      ISSN = {1937-0652,1944-7833},
   MRCLASS = {14F05 (14J30 18E30)},
  MRNUMBER = {3207582},
MRREVIEWER = {Andreas\ H\"oring},
       DOI = {10.2140/ant.2014.8.173},
       URL = {https://doi.org/10.2140/ant.2014.8.173},
}

@article {schmidt:quadric,
    AUTHOR = {Schmidt, Benjamin},
     TITLE = {A generalized {B}ogomolov--{G}ieseker inequality for the smooth
              quadric threefold},
   JOURNAL = {Bull. Lond. Math. Soc.},
  FJOURNAL = {Bulletin of the London Mathematical Society},
    VOLUME = {46},
      YEAR = {2014},
    NUMBER = {5},
     PAGES = {915--923},
      ISSN = {0024-6093,1469-2120},
   MRCLASS = {14F05 (14J30)},
  MRNUMBER = {3262194},
MRREVIEWER = {Jean-Marc\ Dr\'ezet},
       DOI = {10.1112/blms/bdu048},
       URL = {https://doi.org/10.1112/blms/bdu048},
}

@article {koseki:nef-tangent,
    AUTHOR = {Koseki, Naoki},
     TITLE = {Stability conditions on threefolds with nef tangent bundles},
   JOURNAL = {Adv. Math.},
  FJOURNAL = {Advances in Mathematics},
    VOLUME = {372},
      YEAR = {2020},
     PAGES = {107316, 29},
      ISSN = {0001-8708,1090-2082},
   MRCLASS = {14F08 (14J30)},
  MRNUMBER = {4127165},
MRREVIEWER = {Wanmin\ Liu},
       DOI = {10.1016/j.aim.2020.107316},
       URL = {https://doi.org/10.1016/j.aim.2020.107316},
}

@article {koseki:product,
    AUTHOR = {Koseki, Naoki},
     TITLE = {Stability conditions on product threefolds of projective
              spaces and {A}belian varieties},
   JOURNAL = {Bull. Lond. Math. Soc.},
  FJOURNAL = {Bulletin of the London Mathematical Society},
    VOLUME = {50},
      YEAR = {2018},
    NUMBER = {2},
     PAGES = {229--244},
      ISSN = {0024-6093,1469-2120},
   MRCLASS = {14F05 (14J60 14K22 14M25)},
  MRNUMBER = {3830116},
MRREVIEWER = {Mihnea\ Popa},
       DOI = {10.1112/blms.12132},
       URL = {https://doi.org/10.1112/blms.12132},
}

@article {chunyi:fano-3fold,
    AUTHOR = {Li, Chunyi},
     TITLE = {Stability conditions on {F}ano threefolds of {P}icard number
              1},
   JOURNAL = {J. Eur. Math. Soc. (JEMS)},
  FJOURNAL = {Journal of the European Mathematical Society (JEMS)},
    VOLUME = {21},
      YEAR = {2019},
    NUMBER = {3},
     PAGES = {709--726},
      ISSN = {1435-9855,1435-9863},
   MRCLASS = {14F05 (14J45)},
  MRNUMBER = {3908763},
MRREVIEWER = {Izzet\ Coskun},
       DOI = {10.4171/JEMS/848},
       URL = {https://doi.org/10.4171/JEMS/848},
}

@book {lazarsfeld:positivity-I,
    AUTHOR = {Lazarsfeld, Robert},
     TITLE = {Positivity in algebraic geometry. {I}},
    SERIES = {Ergebnisse der Mathematik und ihrer Grenzgebiete. 3. Folge. A
              Series of Modern Surveys in Mathematics [Results in
              Mathematics and Related Areas. 3rd Series. A Series of Modern
              Surveys in Mathematics]},
    VOLUME = {48},
      NOTE = {Classical setting: line bundles and linear series},
 PUBLISHER = {Springer-Verlag, Berlin},
      YEAR = {2004},
     PAGES = {xviii+387},
      ISBN = {3-540-22533-1},
   MRCLASS = {14-02 (14C20)},
  MRNUMBER = {2095471},
MRREVIEWER = {Mihnea\ Popa},
       DOI = {10.1007/978-3-642-18808-4},
       URL = {https://doi.org/10.1007/978-3-642-18808-4},
}

@article {macri:stability-fano-3fold,
    AUTHOR = {Bernardara, Marcello and Macr\`i, Emanuele and Schmidt,
              Benjamin and Zhao, Xiaolei},
     TITLE = {Bridgeland stability conditions on {F}ano threefolds},
   JOURNAL = {\'Epijournal G\'eom. Alg\'ebrique},
  FJOURNAL = {\'Epijournal de G\'eom\'etrie Alg\'ebrique. EPIGA},
    VOLUME = {1},
      YEAR = {2017},
     PAGES = {Art. 2, 24},
      ISSN = {2491-6765},
   MRCLASS = {14F05 (14J45 18E30)},
  MRNUMBER = {3743105},
MRREVIEWER = {Richard\ P.\ Thomas},
       DOI = {10.46298/epiga.2017.volume1.2008},
       URL = {https://doi.org/10.46298/epiga.2017.volume1.2008},
}

@article {avramov:lci,
    AUTHOR = {Avramov, Luchezar L.},
     TITLE = {Locally complete intersection homomorphisms and a conjecture
              of {Q}uillen on the vanishing of cotangent homology},
   JOURNAL = {Ann. of Math. (2)},
  FJOURNAL = {Annals of Mathematics. Second Series},
    VOLUME = {150},
      YEAR = {1999},
    NUMBER = {2},
     PAGES = {455--487},
      ISSN = {0003-486X,1939-8980},
   MRCLASS = {13D03 (13H10 14M10)},
  MRNUMBER = {1726700},
MRREVIEWER = {Paul\ Roberts},
       DOI = {10.2307/121087},
       URL = {https://doi.org/10.2307/121087},
}

@misc{stacks-project,
    shorthand    = {Stacks},
    author       = {The {Stacks Project Authors}},
    title        = {\textit{Stacks Project}},
    howpublished = {\url{https://stacks.math.columbia.edu}},
    year         = {2025},
  }

@article {kawamata:fujita-3-4fold,
    AUTHOR = {Kawamata, Yujiro},
     TITLE = {On {F}ujita's freeness conjecture for {$3$}-folds and
              {$4$}-folds},
   JOURNAL = {Math. Ann.},
  FJOURNAL = {Mathematische Annalen},
    VOLUME = {308},
      YEAR = {1997},
    NUMBER = {3},
     PAGES = {491--505},
      ISSN = {0025-5831,1432-1807},
   MRCLASS = {14C20 (14J30 14J35)},
  MRNUMBER = {1457742},
       DOI = {10.1007/s002080050085},
       URL = {https://doi.org/10.1007/s002080050085},
}

@article {koseki:double-triple-solids,
	AUTHOR = {Koseki, Naoki},
	TITLE = {Stability conditions on {C}alabi--{Y}au double/triple solids},
	JOURNAL = {Forum Math. Sigma},
	FJOURNAL = {Forum of Mathematics. Sigma},
	VOLUME = {10},
	YEAR = {2022},
	PAGES = {Paper No. e63, 33},
	MRCLASS = {14F08 (14J32 14J33)},
	MRNUMBER = {4466641},
	MRREVIEWER = {Hsian-Hua Tseng},
	DOI = {10.1017/fms.2022.58},
	URL = {https://doi.org/10.1017/fms.2022.58},
}

@article {feyz:thomas-noether-loci,
    AUTHOR = {Feyzbakhsh, S. and Thomas, R. P.},
     TITLE = {An application of wall-crossing to {N}oether--{L}efschetz loci},
      NOTE = {With an appendix by C. Voisin},
   JOURNAL = {Q. J. Math.},
  FJOURNAL = {The Quarterly Journal of Mathematics},
    VOLUME = {72},
      YEAR = {2021},
    NUMBER = {1-2},
     PAGES = {51--70},
      ISSN = {0033-5606,1464-3847},
   MRCLASS = {14F08 (14C20 14J30)},
  MRNUMBER = {4271380},
MRREVIEWER = {Anna\ Barbieri},
       DOI = {10.1093/qmathj/haaa022},
       URL = {https://doi.org/10.1093/qmathj/haaa022},
}

@article {feyz:rank-r-dt-theory-from-0,
    AUTHOR = {Feyzbakhsh, S. and Thomas, R. P.},
     TITLE = {Rank {$r$} {DT} theory from rank 0},
   JOURNAL = {Duke Math. J.},
  FJOURNAL = {Duke Mathematical Journal},
    VOLUME = {173},
      YEAR = {2024},
    NUMBER = {11},
     PAGES = {2063--2116},
      ISSN = {0012-7094,1547-7398},
   MRCLASS = {14N35},
  MRNUMBER = {4780520},
MRREVIEWER = {Xiaobin\ Li},
       DOI = {10.1215/00127094-2023-0050},
       URL = {https://doi.org/10.1215/00127094-2023-0050},
}

@article {feyz:rank-r-dt-theory-from-1,
    AUTHOR = {Feyzbakhsh, S. and Thomas, R. P.},
     TITLE = {Rank {$r$} {DT} theory from rank {$1$}},
   JOURNAL = {J. Amer. Math. Soc.},
  FJOURNAL = {Journal of the American Mathematical Society},
    VOLUME = {36},
      YEAR = {2023},
    NUMBER = {3},
     PAGES = {795--826},
      ISSN = {0894-0347,1088-6834},
   MRCLASS = {14N35},
  MRNUMBER = {4583775},
MRREVIEWER = {Hsian-Hua\ Tseng},
       DOI = {10.1090/jams/1006},
       URL = {https://doi.org/10.1090/jams/1006},
}

@misc{liu:cast-bound-quintic,
	title = {{Castelnuovo bound and higher genus Gromov--Witten invariants of quintic 3-folds}},
	author = {Liu, Zhiyu and Ruan, Yongbin},
	year = {arXiv:2210.13411, 2022},
}

@misc{liu:cast-bound-3fold,
	title = {{Castelnuovo bound for curves in projective 3-folds}},
	author = {Liu, Zhiyu},
	year = {arXiv:2407.20161, 2024},
}

@misc{yau:dry-conj,
	title = {{Branes, Bundles and Attractors: Bogomolov and Beyond}},
	author = {Michael R. Douglas and Rene Reinbacher and Shing-Tung Yau},
	year = {arXiv:0604597, 2006},
}

@article {lange:genus-4,
    AUTHOR = {Lange, H. and Newstead, P. E.},
     TITLE = {Higher rank {BN}-theory for curves of genus 4},
   JOURNAL = {Comm. Algebra},
  FJOURNAL = {Communications in Algebra},
    VOLUME = {45},
      YEAR = {2017},
    NUMBER = {9},
     PAGES = {3948--3966},
      ISSN = {0092-7872,1532-4125},
   MRCLASS = {14H60 (14H51)},
  MRNUMBER = {3627641},
MRREVIEWER = {Usha\ N.\ Bhosle},
       DOI = {10.1080/00927872.2016.1251938},
       URL = {https://doi.org/10.1080/00927872.2016.1251938},
}

@article {lange:genus-5,
    AUTHOR = {Lange, H. and Newstead, P. E.},
     TITLE = {Higher rank {BN}-theory for curves of genus 5},
   JOURNAL = {Rev. Mat. Complut.},
  FJOURNAL = {Revista Matem\'atica Complutense},
    VOLUME = {29},
      YEAR = {2016},
    NUMBER = {3},
     PAGES = {691--717},
      ISSN = {1139-1138,1988-2807},
   MRCLASS = {14H60 (14H51)},
  MRNUMBER = {3538651},
MRREVIEWER = {Andrea\ Bruno},
       DOI = {10.1007/s13163-016-0203-4},
       URL = {https://doi.org/10.1007/s13163-016-0203-4},
}

@article {re:multiplication,
    AUTHOR = {Re, Riccardo},
     TITLE = {Multiplication of sections and {C}lifford bounds for stable
              vector bundles on curves},
   JOURNAL = {Comm. Algebra},
  FJOURNAL = {Communications in Algebra},
    VOLUME = {26},
      YEAR = {1998},
    NUMBER = {6},
     PAGES = {1931--1944},
      ISSN = {0092-7872,1532-4125},
   MRCLASS = {14H60},
  MRNUMBER = {1621712},
MRREVIEWER = {Ravi\ D.\ Vakil},
       DOI = {10.1080/00927879808826250},
       URL = {https://doi.org/10.1080/00927879808826250},
}

@article {wilson:trilinear-cy3,
    AUTHOR = {Kanazawa, Atsushi and Wilson, P. M. H.},
     TITLE = {Trilinear forms and {C}hern classes of {C}alabi--{Y}au
              threefolds},
   JOURNAL = {Osaka J. Math.},
  FJOURNAL = {Osaka Journal of Mathematics},
    VOLUME = {51},
      YEAR = {2014},
    NUMBER = {1},
     PAGES = {203--213},
      ISSN = {0030-6126},
   MRCLASS = {14J32 (14F45)},
  MRNUMBER = {3192539},
       URL = {http://projecteuclid.org/euclid.ojm/1396966232},
}

@incollection {beauville:cy3-nonabelian,
    AUTHOR = {Beauville, Arnaud},
     TITLE = {A {C}alabi--{Y}au threefold with non-abelian fundamental group},
 BOOKTITLE = {New trends in algebraic geometry ({W}arwick, 1996)},
    SERIES = {London Math. Soc. Lecture Note Ser.},
    VOLUME = {264},
     PAGES = {13--17},
 PUBLISHER = {Cambridge Univ. Press, Cambridge},
      YEAR = {1999},
      ISBN = {0-521-64659-6},
   MRCLASS = {14J32},
  MRNUMBER = {1714819},
MRREVIEWER = {Mark\ Gross},
       DOI = {10.1017/CBO9780511721540.003},
       URL = {https://doi.org/10.1017/CBO9780511721540.003},
}

@article {chen:on-quasismooth-wci,
    AUTHOR = {Chen, Jheng-Jie and Chen, Jungkai A. and Chen, Meng},
     TITLE = {On quasismooth weighted complete intersections},
   JOURNAL = {J. Algebraic Geom.},
  FJOURNAL = {Journal of Algebraic Geometry},
    VOLUME = {20},
      YEAR = {2011},
    NUMBER = {2},
     PAGES = {239--262},
      ISSN = {1056-3911,1534-7486},
   MRCLASS = {14M10},
  MRNUMBER = {2762991},
MRREVIEWER = {Alexandr\ V.\ Pukhlikov},
       DOI = {10.1090/S1056-3911-10-00542-4},
       URL = {https://doi.org/10.1090/S1056-3911-10-00542-4},
}

@incollection {bayer-macri:icm-report,
    AUTHOR = {Bayer, Arend and Macr\`i, Emanuele},
     TITLE = {The unreasonable effectiveness of wall-crossing in algebraic
              geometry},
 BOOKTITLE = {I{CM}---{I}nternational {C}ongress of {M}athematicians. {V}ol.
              3. {S}ections 1--4},
     PAGES = {2172--2195},
 PUBLISHER = {EMS Press, Berlin},
      YEAR = {[2023] \copyright 2023},
      ISBN = {978-3-98547-061-7; 978-3-98547-561-2; 978-3-98547-058-7},
   MRCLASS = {14F08 (14F17 14H51 14J28 14J32 14J42)},
  MRNUMBER = {4680313},
}

@book {accola:topics-in-rim-surface,
    AUTHOR = {Accola, Robert D. M.},
     TITLE = {Topics in the theory of {R}iemann surfaces},
    SERIES = {Lecture Notes in Mathematics},
    VOLUME = {1595},
 PUBLISHER = {Springer-Verlag, Berlin},
      YEAR = {1994},
     PAGES = {x+105},
      ISBN = {3-540-58721-7},
   MRCLASS = {30F10 (14H55)},
  MRNUMBER = {1329541},
MRREVIEWER = {C.\ Maclachlan},
       DOI = {10.1007/BFb0073575},
       URL = {https://doi.org/10.1007/BFb0073575},
}

@article {lange:genus-6,
    AUTHOR = {Lange, H. and Newstead, P. E.},
     TITLE = {Higher rank {BN}-theory for curves of genus 6},
   JOURNAL = {Internat. J. Math.},
  FJOURNAL = {International Journal of Mathematics},
    VOLUME = {29},
      YEAR = {2018},
    NUMBER = {2},
     PAGES = {1850014, 40},
      ISSN = {0129-167X,1793-6519},
   MRCLASS = {14H51 (14H60)},
  MRNUMBER = {3770937},
MRREVIEWER = {Andrea\ Bruno},
       DOI = {10.1142/S0129167X18500143},
       URL = {https://doi.org/10.1142/S0129167X18500143},
}

@article {koseki:hypersurface,
    AUTHOR = {Koseki, Naoki},
     TITLE = {On the {B}ogomolov--{G}ieseker inequality for hypersurfaces in
              the projective spaces},
   JOURNAL = {Math. Res. Lett.},
  FJOURNAL = {Mathematical Research Letters},
    VOLUME = {30},
      YEAR = {2023},
    NUMBER = {4},
     PAGES = {1113--1130},
      ISSN = {1073-2780,1945-001X},
   MRCLASS = {14F08 (14F06 14J60)},
  MRNUMBER = {4728440},
MRREVIEWER = {Wahei\ Hara},
       DOI = {10.4310/mrl.2023.v30.n4.a6},
       URL = {https://doi.org/10.4310/mrl.2023.v30.n4.a6},
}

@article {mukai:semi-homo,
    AUTHOR = {Mukai, Shigeru},
     TITLE = {Semi-homogeneous vector bundles on an {A}belian variety},
   JOURNAL = {J. Math. Kyoto Univ.},
  FJOURNAL = {Journal of Mathematics of Kyoto University},
    VOLUME = {18},
      YEAR = {1978},
    NUMBER = {2},
     PAGES = {239--272},
      ISSN = {0023-608X},
   MRCLASS = {14F05},
  MRNUMBER = {498572},
MRREVIEWER = {P.\ E.\ Newstead},
       DOI = {10.1215/kjm/1250522574},
       URL = {https://doi.org/10.1215/kjm/1250522574},
}

@article {feyz:curve-counting,
    AUTHOR = {Feyzbakhsh, S. and Thomas, R. P.},
     TITLE = {Curve counting and {S}-duality},
   JOURNAL = {\'Epijournal G\'eom. Alg\'ebrique},
  FJOURNAL = {\'Epijournal de G\'eom\'etrie Alg\'ebrique. EPIGA},
    VOLUME = {7},
      YEAR = {2023},
     PAGES = {Art. 15, 25},
      ISSN = {2491-6765},
   MRCLASS = {14N35 (14D20 14F08 14J60)},
  MRNUMBER = {4603486},
       DOI = {10.46298/epiga.2023.volume7.9818},
       URL = {https://doi.org/10.46298/epiga.2023.volume7.9818},
}

@article {toda:bogomolov-counting,
    AUTHOR = {Toda, Yukinobu},
     TITLE = {Bogomolov--{G}ieseker-type inequality and counting invariants},
   JOURNAL = {J. Topol.},
  FJOURNAL = {Journal of Topology},
    VOLUME = {6},
      YEAR = {2013},
    NUMBER = {1},
     PAGES = {217--250},
      ISSN = {1753-8416,1753-8424},
   MRCLASS = {14N35 (14D21 14J32)},
  MRNUMBER = {3029426},
MRREVIEWER = {Yunfeng\ Jiang},
       DOI = {10.1112/jtopol/jts037},
       URL = {https://doi.org/10.1112/jtopol/jts037},
}

@article {maciocia:fm-transform-ii,
    AUTHOR = {Maciocia, Antony and Piyaratne, Dulip},
     TITLE = {Fourier--{M}ukai transforms and {B}ridgeland stability
              conditions on abelian threefolds {II}},
   JOURNAL = {Internat. J. Math.},
  FJOURNAL = {International Journal of Mathematics},
    VOLUME = {27},
      YEAR = {2016},
    NUMBER = {1},
     PAGES = {1650007, 27},
      ISSN = {0129-167X,1793-6519},
   MRCLASS = {14F05 (14J30 14K99 18E99)},
  MRNUMBER = {3454685},
MRREVIEWER = {Nicolae\ Manolache},
       DOI = {10.1142/S0129167X16500075},
       URL = {https://doi.org/10.1142/S0129167X16500075},
}

@article {liu:bg-ineqaulity-quadratic,
    AUTHOR = {Liu, Shengxuan},
     TITLE = {Stability condition on {C}alabi--{Y}au threefold of complete
              intersection of quadratic and quartic hypersurfaces},
   JOURNAL = {Forum Math. Sigma},
  FJOURNAL = {Forum of Mathematics. Sigma},
    VOLUME = {10},
      YEAR = {2022},
     PAGES = {Paper No. e106, 29},
      ISSN = {2050-5094},
   MRCLASS = {14F08 (14J28 14J32 18G80)},
  MRNUMBER = {4519060},
MRREVIEWER = {Richard\ P.\ Thomas},
       DOI = {10.1017/fms.2022.96},
       URL = {https://doi.org/10.1017/fms.2022.96},
}

@article {ballico:bNCurves,
    AUTHOR = {Ballico, E.},
     TITLE = {Brill--{N}oether theory for vector bundles on projective
              curves},
   JOURNAL = {Math. Proc. Cambridge Philos. Soc.},
  FJOURNAL = {Mathematical Proceedings of the Cambridge Philosophical
              Society},
    VOLUME = {124},
      YEAR = {1998},
    NUMBER = {3},
     PAGES = {483--499},
      ISSN = {0305-0041,1469-8064},
   MRCLASS = {14H60 (14H45)},
  MRNUMBER = {1636576},
MRREVIEWER = {Montserrat\ Teixidor i Bigas},
       DOI = {10.1017/S0305004198002539},
       URL = {https://doi.org/10.1017/S0305004198002539},
}

@article {feyz:effective-restriction-theorem,
    AUTHOR = {Feyzbakhsh, Soheyla},
     TITLE = {An effective restriction theorem via wall-crossing and
              {M}ercat's conjecture},
   JOURNAL = {Math. Z.},
  FJOURNAL = {Mathematische Zeitschrift},
    VOLUME = {301},
      YEAR = {2022},
    NUMBER = {4},
     PAGES = {4175--4199},
      ISSN = {0025-5874,1432-1823},
   MRCLASS = {14J60 (14C20 14F06 14J28)},
  MRNUMBER = {4449744},
MRREVIEWER = {Antony\ Maciocia},
       DOI = {10.1007/s00209-022-03036-1},
       URL = {https://doi.org/10.1007/s00209-022-03036-1},
}

@article {nuer:bg-inequality-singular-surface,
    AUTHOR = {Nuer, Howard and Sorani, Alan},
     TITLE = {The {B}ogomolov--{G}ieseker--{K}oseki inequality on surfaces
              with canonical singularities in arbitrary characteristic},
   JOURNAL = {Adv. Geom.},
  FJOURNAL = {Advances in Geometry},
    VOLUME = {25},
      YEAR = {2025},
    NUMBER = {2},
     PAGES = {227--240},
      ISSN = {1615-715X,1615-7168},
   MRCLASS = {14F08 (14C17 14G17 14J17)},
  MRNUMBER = {4908005},
       DOI = {10.1515/advgeom-2025-0008},
       URL = {https://doi.org/10.1515/advgeom-2025-0008},
}

@misc{bayer:mukai-model-fano-var,
      title={{Mukai models of Fano varieties}}, 
      author={Arend Bayer and Alexander Kuznetsov and Emanuele Macrì},
      year={2025},
      eprint={arXiv:2501.16157}
}

@Misc{perry:stab-on-quotient-product,
  Title                    = {{Stability conditions on crepant resolutions of quotients of product varieties}},
  Author                   = {Alexander Perry and Saket Shah},
  Year                     = {2024},
  Eprint                   = {arXiv:2404.09121}
}

@article {liu:stab-on-product,
    AUTHOR = {Liu, Yucheng},
     TITLE = {Stability conditions on product varieties},
   JOURNAL = {J. Reine Angew. Math.},
  FJOURNAL = {Journal f\"ur die Reine und Angewandte Mathematik. [Crelle's
              Journal]},
    VOLUME = {770},
      YEAR = {2021},
     PAGES = {135--157},
      ISSN = {0075-4102,1435-5345},
   MRCLASS = {14F08},
  MRNUMBER = {4193465},
MRREVIEWER = {Hao\ Max\ Sun},
       DOI = {10.1515/crelle-2020-0010},
       URL = {https://doi.org/10.1515/crelle-2020-0010},
}

@article {beauville:trivial-c1,
    AUTHOR = {Beauville, Arnaud},
     TITLE = {Vari\'et\'es {K}\"ahleriennes dont la premi\`ere classe de
              {C}hern est nulle},
   JOURNAL = {J. Differential Geom.},
  FJOURNAL = {Journal of Differential Geometry},
    VOLUME = {18},
      YEAR = {1983},
    NUMBER = {4},
     PAGES = {755--782},
      ISSN = {0022-040X,1945-743X},
   MRCLASS = {32J25 (14J15 32C10 32G13 53C55)},
  MRNUMBER = {730926},
MRREVIEWER = {N.\ J.\ Hitchin},
       URL = {http://projecteuclid.org/euclid.jdg/1214438181},
}

@article {schmidt:counterexample,
    AUTHOR = {Schmidt, Benjamin},
     TITLE = {Counterexample to the generalized {B}ogomolov--{G}ieseker
              inequality for threefolds},
   JOURNAL = {Int. Math. Res. Not. IMRN},
  FJOURNAL = {International Mathematics Research Notices. IMRN},
      YEAR = {2017},
    NUMBER = {8},
     PAGES = {2562--2566},
      ISSN = {1073-7928,1687-0247},
   MRCLASS = {14F05 (14J30 18E30)},
  MRNUMBER = {3658208},
MRREVIEWER = {Justin\ Sawon},
       DOI = {10.1093/imrn/rnw122},
       URL = {https://doi.org/10.1093/imrn/rnw122},
}

@article {bbmt14,
    AUTHOR = {Bayer, Arend and Bertram, Aaron and Macr\`i, Emanuele and
              Toda, Yukinobu},
     TITLE = {Bridgeland stability conditions of threefolds {II}: {A}n
              application to {F}ujita's conjecture},
   JOURNAL = {J. Algebraic Geom.},
  FJOURNAL = {Journal of Algebraic Geometry},
    VOLUME = {23},
      YEAR = {2014},
    NUMBER = {4},
     PAGES = {693--710},
      ISSN = {1056-3911,1534-7486},
   MRCLASS = {14F05 (14C20 14J30)},
  MRNUMBER = {3263665},
MRREVIEWER = {Fyodor\ L.\ Zak},
       DOI = {10.1090/S1056-3911-2014-00637-8},
       URL = {https://doi.org/10.1090/S1056-3911-2014-00637-8},
}
\bibliographystyle{halpha}

\end{document}